\documentclass{amsart}
\newcommand{\mt}[1]{\mathtt{#1}}

\newcommand{\mF}{\mathcal{F}}

\usepackage{amsmath}
\usepackage{amsthm}
\usepackage{graphicx}
\usepackage{color}
\usepackage{amsfonts}
\usepackage{amssymb}
\usepackage{mathrsfs}
\usepackage{amscd}
\usepackage{url}

\newcommand{\re}{\mathbb{R}}
\newcommand{\cpx}{\mathbb{C}}

\newcommand{\N}{\mathbb{N}}

\renewcommand{\P}{\mathbb{P}}

\newcommand{\lmd}{\lambda}

\def\af{\alpha}
\def\bt{\beta}
\def\gm{\gamma}
\def\rank{\mbox{rank}}

\newcommand{\sig}{\sigma}
\newcommand{\Sig}{\Sigma}

\newcommand{\reff}[1]{(\ref{#1})}
\newcommand{\pt}{\partial}
\newcommand{\prm}{\prime}

\newcommand{\mc}[1]{\mathcal{#1}}



\newcommand{\bdes}{\begin{description}}
\newcommand{\edes}{\end{description}}

\newcommand{\bal}{\begin{align}}
\newcommand{\eal}{\end{align}}

\newcommand{\bnum}{\begin{enumerate}}
\newcommand{\enum}{\end{enumerate}}

\newcommand{\bit}{\begin{itemize}}
\newcommand{\eit}{\end{itemize}}

\newcommand{\bea}{\begin{eqnarray}}
\newcommand{\eea}{\end{eqnarray}}
\newcommand{\be}{\begin{equation}}
\newcommand{\ee}{\end{equation}}

\newcommand{\baray}{\begin{array}}
\newcommand{\earay}{\end{array}}

\newcommand{\bsry}{\begin{subarray}}
\newcommand{\esry}{\end{subarray}}

\newcommand{\bca}{\begin{cases}}
\newcommand{\eca}{\end{cases}}

\newcommand{\bcen}{\begin{center}}
\newcommand{\ecen}{\end{center}}

\newcommand{\bbm}{\begin{bmatrix}}
\newcommand{\ebm}{\end{bmatrix}}

\newcommand{\bmx}{\begin{matrix}}
\newcommand{\emx}{\end{matrix}}

\newcommand{\bpm}{\begin{pmatrix}}
\newcommand{\epm}{\end{pmatrix}}

\newcommand{\btab}{\begin{tabular}}
\newcommand{\etab}{\end{tabular}}

\newtheorem{theorem}{Theorem}[section]
\newtheorem{pro}[theorem]{Proposition}
\newtheorem{prop}[theorem]{Proposition}

\newtheorem{lemma}[theorem]{Lemma}

\newtheorem{defi}[theorem]{Definition}
\theoremstyle{definition}

\newtheorem{exm}[theorem]{Example}
\newtheorem{alg}[theorem]{Algorithm}

\newtheorem{remark}[theorem]{Remark}
\newtheorem{cond}[theorem]{Condition}


\setcounter{equation}{0}
\setcounter{subsection}{0}

\begin{document}

\title[Generating Polynomials and Symmetric Tensor Decompositions]
{Generating Polynomials and Symmetric Tensor Decompositions}

\author{Jiawang Nie}
\address{
Department of Mathematics,  University of California San Diego,  9500
Gilman Drive,  La Jolla,  California 92093,  USA.
} \email{njw@math.ucsd.edu}

\begin{abstract}
This paper studies symmetric tensor decompositions.
%
%
For symmetric tensors, there exist linear relations
of recursive patterns among their entries. Such a relation
can be represented by a polynomial, which is called a {\it generating polynomial}.
The homogenization of a generating polynomial
belongs to the apolar ideal of the tensor.
A symmetric tensor decomposition can be determined by
a set of generating polynomials,
which can be represented by a matrix.
We call it a {\it generating matrix}.
Generally, a symmetric tensor decomposition
can be determined by a generating matrix
satisfying certain conditions. We characterize the sets
of such generating matrices and investigate their properties
(e.g., the existence, dimensions, nondefectiveness).
Using these properties, we propose methods for
computing symmetric tensor decompositions.
Extensive examples are shown to demonstrate
the efficiency of proposed methods.
\end{abstract}

\keywords{symmetric tensor, tensor rank, generating polynomial, generating matrix,
symmetric tensor decomposition, polynomial system.
}

\subjclass[2010]{15A69, 65F99}

\maketitle

\section{Introduction}

Let $m>0$ be an integer. A tensor $\mF$,
of order $m$ and on a vector space $V$ over a field $\mathbb{F}$,
can be viewed as a multi-linear functional
\[
f: \, V \times \cdots \times V \to \mathbb{F}.
\]
(In the above, $V$ is repeated $m$ times.)
The tensor $\mF$ is symmetric if the multi-linear functional $f$ is {\it symmetric},
i.e., $f(z_1,\ldots, z_m) = f(z_{\sig_1}, \ldots, z_{\sig_m})$
for all $z_1,\ldots, z_m \in V$ and for all
all permutations $(z_{\sig_1}, \ldots, z_{\sig_m})$ of $(z_1,\ldots, z_m)$.
Once a basis of $V$ is chosen, $f$ can be
represented by a multi-indexed array which is invariant
under permutation of indices. We denote by $\mt{T}^m(V)$ (resp., $\mt{S}^m(V)$)
the space of all such tensors (resp., symmetric tensors) of order $m$ and on $V$.

This paper focuses on symmetric tensors on
$\mathbb{F} = \cpx$ (the complex field) and $V=\cpx^{n+1}$
(the space of complex vectors of dimension $n+1$).
For convenience, the canonical unit vector basis of $\cpx^{n+1}$ is used.
Then a symmetric tensor $\mF \in \mt{S}^m(\cpx^{n+1})$ can be viewed
as an array, indexed by integer tuples $(i_1, \ldots, i_m)$ such that
\[
\mc{F} = (\mc{F}_{i_1 \ldots i_m})_{ 0 \leq i_1, \ldots, i_m \leq n}
\]
and $\mc{F}_{i_1 \ldots i_m}$ is invariant under permutations
of $(i_1,\ldots, i_m)$. Note that $\mF$ is uniquely
determined by the set of its upper triangular entries:
\be \label{df:up:tri}
\mbox{uptri}(\mF):=
\{ \mc{F}_{i_1 \ldots i_m}\}_{ 0 \leq i_1 \leq \cdots \leq  i_m \leq n}.
\ee

%
%
%

For vectors $u_1, \ldots, u_m \in \cpx^{n+1}$,
their outer product $u_1 \otimes \cdots \otimes u_m$
is the tensor in $\mt{T}^m(\cpx^{n+1})$ such that
\[
(u_1 \otimes \cdots \otimes u_m)_{i_1,\ldots,i_m} = (u_1)_{i_1}
\cdots (u_m)_{i_m}.
\]
(For convenience, we index $u\in \cpx^{n+1}$ by $j=0,1,\ldots,n$.)
An outer product like $u_1 \otimes \cdots \otimes u_m$ is called a rank-1 tensor.
For every $\mc{F} \in \mt{T}^m(\cpx^{n+1})$, there exist rank-1 tensors
$\mc{F}_1,\ldots, \mc{F}_r \in \mt{T}^m(\cpx^{n+1})$ such that
\be \label{dcmp:rank-r}
\mc{F} = \mc{F}_1 + \cdots + \mc{F}_r.
\ee
The smallest such $r$, denoted by $\rank(\mF)$, is called the rank of $\mF$.
For a vector $u \in \cpx^{n+1}$, denote the symmetric tensor power
\[
u^{\otimes m} := u \otimes \cdots \otimes u. \qquad ( u \mbox{ is repeated $m$ times}.)
\]
Tensors like $u^{\otimes m}$ are called rank-1 symmetric tensors.
Every symmetric tensor
is a linear combination of rank-1 symmetric tensors.
The {\it symmetric rank} of $\mc{F}$, denoted by $\rank_S(\mc{F})$,
is defined to be the smallest $r$ such that
\be \label{df:symrank}
%
%
\mc{F} =(u_1)^{\otimes m} + \cdots + (u_r)^{\otimes m}.
\ee \noindent
Clearly, $\rank(\mc{F}) \leq \rank_S(\mc{F})$.
The equation \reff{df:symrank} is called a
{\it symmetric tensor decomposition} (STD).
For symmetric tensors,  {\it their symmetric ranks
are just called ranks} for convenience, throughout the paper.
If $r = \rank_S(\mF)$, $\mF$ is called a rank-$r$ tensor and
the equation in \reff{df:symrank} is called a {\it rank decomposition}.

It is typically hard to determine the rank
of a symmetric tensor. However, for a general one,
its rank is given by a formula (see \reff{AlxHrchFml})
due to Alexander and Hirschowitz \cite{AlxHirs95}.
STDs have wide applications in chemometrics, signal
processing and higher order statistics (cf.~\cite{Com00}).
We refer to Comon et al.~\cite{CGLM08},
Lim \cite{Lim13}, and Landsberg \cite{Land12}
for symmetric tensors and their applications.

\subsection{Apolarity and catalecticant matrices}
\label{ssc:apolar:cat}

Symmetric tensor decompositions are closely related to apolarity,
which we refer to Iarrobino and Kanev \cite{IarKan99}.
To describe it, we need to index symmetric tensors by monomials,
or equivalently, by monomial powers.
Let $\cpx[\tilde{x}]:=\cpx[x_0,x_1,\ldots,x_n]$ be the ring of polynomials in
$\tilde{x} := (x_0, x_1,\ldots, x_n)$, with complex coefficients.
For $\theta = (\theta_0, \theta_1,\ldots,\theta_n) \in \N^{n+1}$, denote
\[
|\theta| := \theta_0 + \theta_1 + \cdots + \theta_n, \quad
\tilde{x}^\theta := x_0^{\theta_0}   x_1^{\theta_1}  \cdots  x_n^{\theta_n}.
\]
For $\mc{F} \in \mt{S}^m(\cpx^{n+1})$, we can index
it by $\theta$, with $|\theta|=m$, such that
\be \label{idx:theta}
\mc{F}_{\theta} := \mc{F}_{i_1 \ldots i_m}
\quad \mbox{ whenever} \quad
x_{i_1} \cdots x_{i_m}  = \tilde{x}^\theta.
\ee
Because $\mF$ is symmetric, there is a one-to-one correspondence
between $\{\mF_{\theta} \}_{|\theta|=m}$ and
$\mbox{uptri}(\mF)$ as in \reff{df:up:tri}.
The tensor $\mc{F}$ determines the polynomial in $\tilde{x}$:
\be \label{df:F(tdx)}
\mF(\tilde{x}) := \sum_{ 0 \leq i_1, \ldots, i_m \leq n}
\mF_{i_1 \ldots i_m}  x_{i_1} \cdots x_{i_m}.
\ee
Clearly, $\mF(\tilde{x})$ is a form
(i.e., a homogeneous polynomial) of degree $m$.
One can verify that (denote $\theta ! := \theta_0! \theta_1! \cdots \theta_n!$)
\be \label{F(tdx)=WarExpr}
\mF(\tilde{x}) = \sum_{ |\theta| = m}
  \mF_{\theta} \frac{m!}{ \theta ! }  \tilde{x}^\theta.
\ee

For a polynomial $p \in \cpx[\tilde{x}]$ with the expansion
\[
p(\tilde{x} ) = \sum_{ \theta = (\theta_0,  \theta_1, \ldots \theta_n)  }
p_{\theta} x_0^{\theta_0} x_1^{\theta_1} \cdots x_n^{\theta_n},
\]
define the operation $\circ$ between $p$ and a tensor $\mF$ as
\be \label{opr<poF>}
p \circ \mF := \sum_{ \theta = (\theta_0,  \theta_1, \ldots \theta_n)  }
p_{\theta}  \frac{\pt^{\theta_0 + \theta_1 + \cdots + \theta_n} }
{\pt x_0^{\theta_0} \pt x_1^{\theta_1} \cdots \pt x_n^{\theta_n} } \mF(\tilde{x}).
\ee
Note that $p \circ \mF$ is also a polynomial in $\tilde{x}$.
If $p$ is a form of degree $k\leq m$, then
$p \circ \mF$ is a form of degree $m-k$.
A polynomial $p \in \cpx[\tilde{x}]$ is said to be {\it apolar} to $\mF$ if
\be \label{ap:poF=0}
p \circ \mF  = 0,
\ee
that is, $p \circ \mF $ is the identically zero polynomial in $\tilde{x}$.
The {\it apolar ideal} of $\mF$ is the set
\be \label{df:apo:ideal}
\mbox{Ann}(\mF) :=\{ p \in \cpx[\tilde{x}]:
p \circ \mF  = 0 \}.
\ee
Indeed, $\mbox{Ann}(\mF)$ is a homogeneous ideal in $\cpx[\tilde{x}]$.
The operation $p \circ \mF $ is bilinear with respect to $p$ and $\mF$.
The {\it apolarity lemma} (cf.~\cite[Lemma~1.15]{IarKan99})
implies that a tensor $\mF$ has the decomposition
\[
\mc{F} =(u_1)^{\otimes m} + \cdots + (u_r)^{\otimes m},
\]
where $u_1,\ldots, u_r \in \cpx^{n+1}$ are pairwisely linearly independent,
if and only if the vanishing ideal of the points $u_1,\ldots, u_r$
(i.e., the set of polynomials that vanish on each $u_i$)
is contained in $\mbox{Ann}(\mF)$.
Thus, computing such a decomposition for $\mF$ is equivalent to
finding a set of forms $p_1, \ldots, p_N \in \mbox{Ann}(\mF)$,
which have finitely many and only simple zeros,
in the projective space $\P^n$ (see \S\ref{ssc:elm:AG}).

Let $ \cpx[\tilde{x}]_{k}^{hom}$ be the space of forms of degree $k$.
For $0 \leq k \leq m$, define the linear mapping
$C_{\mF}(m-k, k): \,  \cpx[\tilde{x}]_k^{hom} \to  \mt{S}^{m-k}(\cpx^{n+1})$,
\be \label{map:cat:mk}
p \mapsto  \frac{(m-k)!}{m!} \mbox{tensor}( p \circ \mF ).
\ee
where $\mbox{tensor}( p \circ \mF )$ denotes the tensor
$\mc{T} \in \mt{S}^{m-k}(\cpx^{n+1})$ such that
\[
\mc{T}( \tilde{x} ) \, = \, p \circ \mF.
\]
%
%
Under the canonical monomial basis of $\cpx[\tilde{x}]_k^{hom}$
and the canonical basis of $\mt{S}^{m-k}(\cpx^{n+1})$
corresponding to the monomial power indexing,
the representing matrix of the linear mapping $C_{\mF}(m-k, k)$
is the catalecticant matrix (cf.~\cite[\S1.1]{IarKan99})
\be \label{Cat:m-k:k}
\mbox{Cat}^{m-k,k}(\mF) :=
(\mF_{\vartheta + \theta})_{|\vartheta| = m-k, |\theta| = k  }.
\ee
Note that a form $q \in \cpx[\tilde{x}]_k^{hom}$ belongs to $\mbox{Ann}(\mF)$
if and only if the coefficient vector of $q$,
with respect to the basis of canonical monomials,
belongs to the null space of the catalecticant matrix $\mbox{Cat}^{m-k,k}(\mF)$.
The tensor $\mF$ has $m+1$ catalecticant matrices,
corresponding to $k=0,1,\ldots, m$. Among them, we are often interested in
the most square one, corresponding to $k = \lceil \frac{m}{2} \rceil$
(see \S\ref{sc:prlm} for the ceiling $\lceil \cdot \rceil$
and floor $\lfloor \cdot \rfloor$).
For convenience, we call it the {\it catalecticant matrix} of $\mF$ and denote
\be \label{CAT:most:sq}
\mbox{Cat}(\mF) := \mbox{Cat}^{ \lfloor \frac{m}{2} \rfloor,
\lceil \frac{m}{2} \rceil}(\mF).
\ee

\subsection{Existing work on STD}
\label{ssc:oldstd}

There exists much work on computing symmetric tensor decompositions,
by using apolarity, catalecticant matrices and other mathematical methods.
We refer to Comon et al. \cite{CGLM08},
Landsberg \cite{Land12} and the references therein
for recent results in the area.

For binary tensors (i.e., $n=1$), Sylvester's algorithm is often applied
to compute decompositions. It can be equivalently interpreted by apolarity as follows.
For a general $\mF \in \mt{S}^{m}(\cpx^2)$ with
$m= 2m_0$ (resp., $m=2m_0+1$), its symmetric rank equals $m_0+1$,
and a rank decomposition can be constructed from nontrivial zeros
of a general form $q$ belonging to the kernel of the mapping
$C_{\mF}(m_0-1,m_0+1)$ (resp., $C_{\mF}(m_0,m_0+1)$),
by the apolarity lemma. We refer to
Bernardi et al.~\cite{BerGimIda11},
Comas and Seiguer \cite{ComSei11} for binary tensor decompositions.

For higher dimensional tensors, the catalecticant method is often used.
Let $\{ \phi_1, \ldots, \phi_s \}$ be a basis of the kernel
of the mapping $C_{\mF}( \lfloor \frac{m}{2} \rfloor, \lceil \frac{m}{2} \rceil )$,
if it exists. If the homogeneous polynomial system
\be \label{phi(tx)=0}
\phi_1 (\tilde{x}) =  \cdots = \phi_s(\tilde{x}) = 0
\ee
has finitely many solutions in the projective space $\P^n$
and they are all simple,
then we can construct a decomposition for $\mF$
from the solutions, by the apolarity lemma.
If \reff{phi(tx)=0} has infinitely many solutions in $\P^n$,
or has a repeated solution, then we are not guaranteed to get an STD.
For tensors of generic ranks, the system \reff{phi(tx)=0}
typically has infinitely many solutions in $\P^n$,
and the catalecticant method may not be able to compute STDs.
We refer to Iarrobino and Kanev \cite[Chap.~4]{IarKan99}
and Oeding and Ottaviani~\cite[\S2.2]{OedOtt13}.

Brachat et al.~\cite{BCMT10} proposed a method for computing STDs,
by using properties of Hankel (and truncated Hankel) operators.
For $\mF \in \mt{S}^m(\cpx^{n+1})$ of rank $r$,
one can extend it to a higher order tensor
$\widetilde{\mF} \in \mt{S}^k(\cpx^{n+1})$
of order $k \geq m$ (depending on the value of $r$),
with new tensor entries as unknowns.
This method is equivalent to computing new entries of $\widetilde{\mF}$
such that the ideal, determined by the null space of
$\mbox{Cat}(\widetilde{\mF})$, is zero-dimensional and radical.
That is, determine whether there is an extension $\widetilde{\mF}$
such that the system \reff{phi(tx)=0}, corresponding to $\widetilde{\mF}$,
has finitely many and only simple solutions in $\P^n$.
The method starts with $r=1$; if such an extension $\widetilde{\mF}$
does not exist, then increase the value of $r$ by $1$, and repeat the process.
Theoretically, this method can compute STDs for all tensors.
In practice, it is usually very difficult to do that,
because checking existence of such an extension $\widetilde{\mF}$
is typically very hard.
%
%
We refer to Algorithm~5.1 of \cite{BCMT10}.

Symmetric tensor decompositions are equivalent to Waring decompositions
of homogeneous polynomials (cf.~\cite{Land12,OedOtt13}).
Oeding and Ottaviani~\cite{OedOtt13} proposed methods
for computing Waring decompositions.
They use Koszul flattening, tensor eigenvectors and vector bundles.
For even ordered tensors,
they have similar properties as the catalecticant method;
for odd ordered tensors,
they can compute STDs for broader classes of tensors.
Typically, these methods are efficient
when the ranks are lower than the generic ones.
For tensors of generic ranks, these methods may not be able to get STDs,
except some special cases (e.g., $4$-dimensional cubic tensors).
We refer to Algorithms~3, 4, 5 in \cite{OedOtt13}.

In addition to the above, there exists other work in the area.
We refer to Ballico and Bernardi \cite{BalBer12},
Bernardi et al. \cite{BerGimIda11},
Buczy\'{n}ska and Buczy\'{n}ski \cite{BucBuc10},
Comon et al. \cite{CGLM08},
Comon and Mourrain \cite{ComMou96},
Landsberg \cite{Land12} and the references therein.

\subsection{Generating polynomials} \,
\label{sbsc:rc}

As in \reff{idx:theta},
symmetric tensors in $\mt{S}^m(\cpx^{n+1})$ can be equivalently
indexed by monomials of degree equal to $m$ and in
$(x_0, x_1,\ldots, x_n)$.
Letting $x_0=1$, we can also equivalently index them
by monomials of degrees $\leq m$ and in $x := (x_1,\ldots, x_n)$.
Let $\cpx[x]:=\cpx[x_1,\ldots,x_n]$ be the ring of complex polynomials in $x$.
%
%
Let
\[
\N_m^n := \{ \af = (\af_1, \ldots, \af_n) \in \N^n :  |\af|   \leq m \}.
\]
For $\mc{F} \in \mt{S}^m(\cpx^{n+1})$, we can equivalently index
it by $\af \in \N_m^n$ such that
\[
\mc{F}_{\af} := \mc{F}_{m-|\af|, \af}.
\]
The right hand side indexing in the above is as in \reff{idx:theta}.
By setting $x_0=1$, the above indexing is equivalent to that
(denote $x^\af := x_1^{\af_1}\cdots x_n^{\af_n}$)
\be \label{index:F:af}
\mc{F}_{\af} = \mc{F}_{i_1 \ldots i_m}
\quad \mbox{if} \quad
x_{i_1}  \cdots x_{i_m} = x^\af.
\ee
Throughout the paper, we index tensors in $\mt{S}^m(\cpx^{n+1})$
by monomial powers $\af \in \N_m^n$, unless otherwise specified.

For symmetric tensors, there exist linear relations of recursive patterns
about their entries. Such linear relations can be represented by polynomials.
Denote by $\cpx[x]_m$ the space of polynomials
in $\cpx[x]$ with degrees $\leq m$. We define the bilinear product
$\langle \cdot, \cdot \rangle$ between
$p \in \cpx[x]_m$ and $ \mF \in \mt{S}^m(\cpx^{n+1})$ such that
%
%
\be \label{op:scrL:F}
 \langle p, \mc{F}  \rangle   =
\sum_{\af\in\N_m^n } p_\af \mc{F}_\af
\quad \mbox{ for }\quad
p =  \sum_{\af\in\N_m^n } p_\af x^\af.
\ee
In the above, $p_\af$ is the coefficient of $x^\af$ in $p$.

\begin{defi} \label{def:GR} \rm
We call $g\in \cpx[x]_m$ a {\it generating polynomial} (GP) for $\mc{F}$ if
\be \label{df:GRpq}
\langle g \cdot x^{\bt}, \mc{F} \rangle  = 0 \quad \forall\,
\bt  \in \N_{m-\deg(g)}^n.
\ee
\end{defi}

The condition \reff{df:GRpq} gives a set of equations
which are recursively generated by multiplying $g$
with monomials of appropriate degrees.
They are bilinear with respect to $g$ and $\mF$.
%
%
The name {\it generating polynomial} is motivated from the notion of
{\it recursively generated relation},
which was widely used for solving truncated moment problems
(cf.~Curto and Fialkow~\cite{CF96,CF98,CF05}).
Indexed by monomial powers in $\N_m^n$,
a tensor in $\mt{S}^m(\cpx^{n+1})$ can be viewed as a
truncated multi-sequence (tms).
A recursively generated relation for a tms can be represented by a polynomial
$g$ satisfying equations of patterns like \reff{df:GRpq} (cf.~\cite{CF98}).
This motivates the definition of generating polynomials for symmetric tensors.
Moreover, the word {\it generating} is corresponding to the fact that
generating polynomials can be used to determine
the entire tensor from a few of its entries.
This is demonstrated by Example~\ref{exmp:rk2}.

The concept of generating polynomials, as in Definition~\ref{def:GR},
is closely related to apolar ideals, although they look very differently.
As shown in Proposition~\ref{pro:apo<=>gen}, a polynomial $g$ is a GP
for $\mF$ if and only if
its homogenization $\tilde{g}(\tilde{x}) := x_0^{\deg(g)} g( x/x_0)$
is apolar to $\mF$, i.e., $\tilde{g}$
belongs to the apolar ideal $\mbox{Ann}(\mF)$ as in \reff{df:apo:ideal}.
%
%
For this reason, some readers may call generating polynomials
by alternative names like
``elements of an apolar ideal", ``apolar ideal elements", ``apolar forms".
Although the set of generating polynomials
is equivalent to the apolar ideal,
%
%
we find the formulation we give to be
more straightforward and easier to implement
in computation, because \reff{df:GRpq} gives
an explicit set of equations.

\begin{exm} \label{exmp:rk2}
Consider the tensor $\mc{F} \in \mt{S}^3(\cpx^3)$ whose slices
$\mF_{:,:,0}$, $\mF_{:,:,1}$, $\mF_{:,:,2}$ are respectively given as
\[
\left(
\baray{rrr}
     7  &  -3  &   9  \\
    -3  &  13  &  20  \\
     9  &  20  &  19  \\
\earay  \, \left| \,
\baray{rrr}
    -3 &   13  &  20  \\
    13 &  -27  &   6  \\
    20 &    6  &   6  \\
\earay  \, \right| \,
\baray{rrr}
     9  &  20  &  19  \\
    20  &   6  &   6  \\
    19  &   6  &  45  \\
\earay \right).
\]
We can check that the following
\[
\baray{rcl}
g_1 & =  &  14 -x_1 -4x_2 - 5 x_1^2,    \\
g_2 & =  & 4 -6 x_1 + 6 x_2 -5 x_1 x_2,  \\
g_3 & =  & 14 + 4x_1 + x_2 - 5 x_2^2
\earay
\]
are generating polynomials for $\mF$.
Their commons zeros are
\[
(-2 , -1), \quad (1,  2), \quad (2, -2).
\]
Using them, we can get a decomposition for $\mF$ as
\[
\mF = 3(1, -2 , -1)^{\otimes 3} +5 (1, 1,  2)^{\otimes 3}
- (1, 2, -2)^{\otimes 3}.
\]
The above is a rank decomposition for $\mF$, and $\rank_S(\mF)=3$.
(This can be implied by Lemma~\ref{lm:relF:ranks},
because the catalecticant matrix has rank $3$.)
\end{exm}

The meaning of the word {\it generating} can be illustrated
by the tensor in Example~\ref{exmp:rk2}.
It is important to observe that the entire tensor can be
determined by its first three entries $7, -3, 9 $
and its generating polynomials $g_1, g_2, g_3$.
The condition \reff{df:GRpq} implies that
(we use the monomial power indexing in \reff{index:F:af}){\smaller
\[
\mF_{20} = \frac{1}{5} ( 14 \mF_{00} - \mF_{10} - 4 \mF_{01} )  = 13, \quad
\mF_{11} = \frac{1}{5} ( 4 \mF_{00} - 6\mF_{10} +6 \mF_{01} )  =  20,
\]
\[
\mF_{02} = \frac{1}{5} ( 14 \mF_{00} +4 \mF_{10} + \mF_{01} )  = 19 , \quad
\mF_{30} = \frac{1}{5} ( 14 \mF_{10} - \mF_{20} - 4 \mF_{11} )  = -27,
\]
\[
\mF_{21} = \frac{1}{5} ( 14 \mF_{01} - \mF_{11} - 4 \mF_{02} )  = 6, \quad
\mF_{12} = \frac{1}{5} ( 4 \mF_{01} - 6\mF_{11} +6 \mF_{02} )  =  6 ,
\]
\[
\mF_{03} = \frac{1}{5} ( 14 \mF_{01} +4 \mF_{11} + \mF_{02} )  =  45.
\] \noindent}That is,
we can determine the entire tensor $\mF$, from the first
three entries and the polynomials $g_1,g_2, g_3$.
Indeed, this is also true for general tensors.
As we summarize in Prop.~\ref{pro:generate:F}, a general tensor of rank $r$ can
be determined by its first $r$ entries and a set of GPs.
This is another motivation for the concept of generating polynomials.

\subsection{Contributions}

This paper proposes methods for computing symmetric tensor decompositions,
by using generating polynomials.

As shown in Example~\ref{exmp:rk2}, an STD can be
constructed from the common zeros of a set of generating polynomials.
This motivates us to use GPs for computing STDs.
We propose a general approach for doing this
in a computationally efficient way. For general rank-$r$ tensors,
we construct a set of generating polynomials, say,
$\varphi_1, \ldots, \varphi_K$, which have $r$ common zeros.
To do this, we consider special generating polynomials,
which have computationally efficient formats (i.e.,
use low order monomials as few as possible),
and whose companion matrices are easily constructible.
Such a set of GPs can be represented by a matrix $G$,
which we call a {\it generating matrix} (GM).
For $\varphi_1, \ldots, \varphi_K$ to have $r$ common zeros,
their companion matrices are required to commute.
This gives a set of quadratic equations in the matrix $G$.
Such generating matrices are said to be {\it consistent}.

The paper studies properties of GPs for computing STDs.
We show that generally there is a one-to-one
correspondence between STDs and consistent GMs.
For general tensors of a given rank,
an STD uniquely determines a consistent GM,
and conversely, a nondefective GM uniquely determines an STD.
The nondefectiveness means that the generating polynomials
associated to the GM have no repeated zeros.
(This is often the case.) The cardinality of equivalent STDs
is equal to the cardinality of nondefective GMs.
The basic properties of GPs and GMs,
e.g., existence, dimensions and nondefectiveness,
are investigated in \S\ref{sc:GenRel}.

After studying such properties, we propose methods
for computing STDs. An STD can be obtained from a consistent GM,
which can be found by solving a set of quadratic equations.
There are two general types of mathematical methods for doing this.

\bit

\item  The first one is the type of {\it algebraic methods}.
They are based on solving polynomial systems
with classical algebraic methods (e.g., based on Gr\"{o}bner basis computations).
Their advantages include:
they are mathematically guaranteed to get STDs;
they can get all distinct tensor decompositions if there are finitely many ones;
they can compute the degrees of the fibers of decompositions
if there are infinitely many ones
(cf.~Examples~\ref{exmp:5.1}, \ref{exmp:5.2}).
A disadvantage is that algebraic type methods are limited to
small tensors, because of the typically high cost of algebraic methods.

\item The second one is the type of {\it numerical methods}.
A consistent GM is determined by a set of quadratic equations.
The classical numerical methods, for solving nonlinear systems
and nonlinear least-squares problems,
can be applied to compute consistent GMs. A major advantage is that
they can produce STDs for larger tensors,
because they avoid computations of Gr\"{o}bner and border bases.
A disadvantage is that they cannot be mathematically guaranteed
to produce an STD.
However, they work very efficiently in practice,
as demonstrated by numerical experiments.

\eit
The implementation of these methods
and their properties are presented in \S\ref{sc:symTD}.
They can be applied to all tensors of all ranks.
If they have generic ranks, i.e., given by the formula \reff{AlxHrchFml},
we can get rank decompositions.
This is an attractive property of our methods.
In the contrast, previously existing methods
may have difficulties for tensors of generic ranks.

We also give extensive numerical experiments for computing
STDs by using the proposed methods.
By algebraic methods, if there are infinitely many STDs,
we can get the degrees of fibers of decompositions.
This is shown in Examples~\ref{exmp:5.1} and \ref{exmp:5.2}.
If there are finitely many STDs, we can get all of them.
Please see examples in \S\ref{ssc:cmp:algbra}.
By numerical methods, we can get decompositions for much larger tensors.
The computational results are reported in \S\ref{ssc:comp:num}.

The proposed methods use some basic results from computational algebra.
For completeness of the paper,
we review such results in \S\ref{sc:prlm}.

\section{Preliminaries}
\label{sc:prlm}
\setcounter{equation}{0}

\noindent
{\bf Notation} \,
The symbol $\N$ (resp., $\re$, $\cpx$) denotes the set of
nonnegative integers (resp., real, complex numbers).
%
%
The cardinality of a finite set $S$ is denoted as $|S|$.
For a finite set $\mathbb{B} \subseteq \cpx[x]$ and a vector $v \in \cpx^n$, denote
\be \label{v:to:*m}
[v]_\mathbb{B}  := \big( p(v) \big)_{p \in \mathbb{B}},
\ee
the vector of monomials in $\mathbb{B}$ evaluated at the point $v$.
For $t\in \re$, $\lceil t\rceil$ (resp., $\lfloor t\rfloor$)
denotes the smallest integer not smaller
(resp., the largest integer not bigger) than $t$.
%
%
%
For a complex matrix $A$, $A^T$ denotes its transpose and $A^*$
denotes its conjugate transpose.
%
%
%
For a complex vector $u$, $\| u \|_2 = \sqrt{u^*u}$ denotes the standard Euclidean norm.
The $e_i$ denotes the standard $i$-th unit vector in $\N^n$.
%
For a tensor $\mc{F} \in \mt{T}^m(\cpx^{n+1})$,
its standard norm $\| \mc{F} \|$ is defined as
\be \label{tensor:norm}
\|\mF\|  = \Big(
\sum_{0 \leq i_1,\ldots, i_m \leq n}  |\mc{F}_{i_1 \ldots i_m}|^2
\Big)^{1/2}.
\ee
%
For two square matrices $X,Y$ of same dimension, denote their commutator:
\[
[X, Y] \, := \, XY-YX.
\]

\subsection{Elementary algebraic geometry}
\label{ssc:elm:AG}

In the space $\cpx^N$, we define the equivalence relation
$\sim$ as: for $a,b \in \cpx^N$, $a \sim b$
if and only if $a = \tau b$ for a complex number $\tau \ne 0$.
A set of all vectors that are equivalent to each other
is called an equivalence class.
The set of all nonzero equivalent classes in $\cpx^N$
is the projective space $\P^{N-1}$.
For basic concepts, such as {\it ideal}, {\it radicalness}, {\it projective variety},
{\it quasi-projective variety}, {\it affine variety},
{\it dimensions}, {\it codimensions}, {\it irreducibility}, {\it quotient space},
{\it open and closed sets in Zariski topology},
we refer to Cox, Little and O'Shea~\cite{CLO07},
Harris~\cite{Har} and Shafarevich~\cite{Sha:BAG1}.

On an affine or projective variety $V$, a {\it general} point for a property means that
the point belongs to a dense subset of $V$ on which the property holds,
in the standard Zariski topology.
A property is said to be {\it generically} true on $V$
if it holds on general points of $V$. For an irreducible variety $V$,
a property is generically true on $V$ if it is true
in a Zariski open subset of $V$.
We refer to Remark~2.1 of \cite{OedOtt13}.

\subsection{Basic properties of symmetric tensors}
\label{sbsc:symtsr}

The space $\mt{S}^m(\cpx^{n+1})$ of symmetric tensors
has dimension $\binom{n+m}{m}$. Its projectivization
is denoted as $\P \mt{S}^m(\cpx^{n+1})$, the set
of all nonzero equivalent classes in $\mt{S}^m(\cpx^{n+1})$.
The projective space $\P\mt{S}^m(\cpx^{n+1})$
has dimension $\binom{n+m}{m}-1$.
In $\P\mt{S}^m(\cpx^{n+1})$, let $\sig_r$ be the Zariski closure of
the set of equivalent classes of
$(u_1)^{\otimes m}+\cdots+(u_r)^{\otimes m}$,
with $u_1,\ldots,u_r \in \cpx^{n+1}$.
Equivalently, $\sig_r$ is the $r$-th secant variety of
the Veronese variety of degree $m$ and in $n+1$ variables.
The set $\sig_r$
is an irreducible variety in $\P\mt{S}^m(\cpx^{n+1})$
(cf.~\cite[\S5.1]{Land12}).

Determining the rank of a given tensor is a hard problem (cf.~\cite{HiLi13}).
However, for a general $\mc{F} \in \mt{S}^m(\cpx^{n+1})$ with $m>2$,
$\rank_S(\mF)$ is given by the formula {\small
\be \label{AlxHrchFml}
\overline{R}_S(m,n+1) :=  \left\lceil \frac{1}{n+1} \binom{n+m}{m} \right \rceil  +
\bca
1 & \text{ if } (m,n) \in \Omega, \\
0 & \text{ otherwise,}
\eca
\ee
\noindent}where
$\Omega =\{ (3,4), (4,2), (4,3), (4,4) \}$.
This is a result of Alexander and Hirschowitz \cite{AlxHirs95}.
For $r \leq \overline{R}_S(m,n+1)$,
the dimension of $\sig_r$ is (cf.~\cite[\S5.4]{Land12}){
\be \label{fml:dim:sig-r}
\dim \sig_r = \min \Big\{r(n+1)-1,
{\smaller \smaller \smaller \binom{n+m}{m}-1 } \Big \},
\ee  \noindent}except
the following cases:
\bit

\item if $m=2$ and $2\leq r\leq n$, then
$\dim \sig_r = \binom{r+1}{2} + r(n+1-r)-1$;

\item if $m=3,n=4,r=7$, then
$\dim \sig_r = \binom{n+m}{m} -2$;

\item if $m=4$, $2 \leq n \leq 4$, and $r=\binom{n+2}{2}-1$, then
$\dim \sig_r = \binom{n+m}{m} -2$.

\eit

In numerical computations, typically we cannot get exact
tensor decompositions because of round-off errors.
Usually we can only expect a decomposition
which is correct up to a small error.
Moreover, for $\mF \in \sig_r$, it is possible that $\rank_S(\mF)>r$,
because the set of rank-$r$ tensors might not be closed.
In practice, people are often
interested in the so-called {\it border rank} (cf.~\cite{Land12}).
The {\it symmetric border rank} of $\mF$,
denoted as $\rank_{SB}(\mc{F})$, is defined as
\be \label{df:bordrank}
\rank_{SB}(\mc{F}) = \min \left\{r: \, \mF \in \sig_r  \right\}.
\ee
The formula \reff{AlxHrchFml} gives
an upper bound for $\rank_{SB}(\mc{F})$.
%

Recall the catalecticant matrix (the most square one) as in
\reff{CAT:most:sq}. If we index $\mF$ as in \reff{index:F:af}, then
\be \label{df:CatMat:F}
\mbox{Cat}(\mF) \,= \, (\mF_{\af+\bt})_{|\af|\leq \lfloor \frac{m}{2} \rfloor,
|\bt|\leq \lceil \frac{m}{2} \rceil}.
\ee
%
The tensor ranks and catalecticant matrix ranks are related as follows.
\begin{lemma}  \label{lm:relF:ranks}
For all $\mF \in \mt{S}^m(\cpx^{n+1})$, it holds that
\be  \label{rk:Cat<=B<=S}
\rank\,\mbox{Cat}(\mF) \leq \rank_{SB}(\mF) \leq  \rank_S(\mF).
\ee
\end{lemma}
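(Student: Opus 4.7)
The plan is to prove the two inequalities separately, with the right one being essentially immediate from definitions and the left one requiring a closedness argument after a rank-$1$ computation.

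First I will handle $\rank_{SB}(\mF) \leq \rank_S(\mF)$. If $r = \rank_S(\mF)$, then $\mF$ is itself a sum $(u_1)^{\otimes m}+\cdots+(u_r)^{\otimes m}$, so its equivalence class lies in the set whose Zariski closure defines $\sig_r$. Hence $\mF \in \sig_r$ and $\rank_{SB}(\mF) \leq r$. No closure is actually needed on this side.

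Next, for $\rank\,\mbox{Cat}(\mF) \leq \rank_{SB}(\mF)$, the key computation is that for a rank-$1$ symmetric tensor $\mF = u^{\otimes m}$ the catalecticant matrix factors as a rank-$1$ outer product. Indexing as in \reff{df:CatMat:F}, one checks $(u^{\otimes m})_{\af+\bt}=u^{\af+\bt}$ (after the affine normalization), so
\[
\mbox{Cat}(u^{\otimes m}) \;=\; [u]_{\lfloor m/2\rfloor} \,[u]_{\lceil m/2\rceil}^T,
\]
where $[u]_k$ is the vector of degree-$\leq k$ monomials evaluated at $u$. Since $\mbox{Cat}(\cdot)$ is linear in its tensor argument, for any $\mF =\sum_{i=1}^r (u_i)^{\otimes m}$ the catalecticant matrix is a sum of $r$ rank-$1$ matrices, so $\rank\,\mbox{Cat}(\mF)\leq r$. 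In particular, this inequality holds on the set of symmetric tensors of rank $\leq r$.

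Finally, I lift from rank to border rank. The condition $\rank\,\mbox{Cat}(\mF)\leq r$ is given by the vanishing of all $(r+1)\times(r+1)$ minors of $\mbox{Cat}(\mF)$, which are polynomials in the entries of $\mF$. Hence the set $\{\mF: \rank\,\mbox{Cat}(\mF)\leq r\}$ is Zariski closed in $\mt{S}^m(\cpx^{n+1})$ (and invariant under scaling, so descends to a closed set in $\P\mt{S}^m(\cpx^{n+1})$). Since it contains all rank-$\leq r$ tensors by the previous paragraph, it also contains $\sig_r$. Taking $r=\rank_{SB}(\mF)$ yields $\rank\,\mbox{Cat}(\mF)\leq \rank_{SB}(\mF)$. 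The main (mild) obstacle is just bookkeeping: making sure the monomial-indexing convention in \reff{df:CatMat:F} is consistent with the rank-$1$ factorization, and citing closedness of determinantal varieties; no deeper machinery is needed.
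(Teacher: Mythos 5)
Your proposal is correct, and for the main inequality it takes a genuinely different route from the paper. The paper proves $\rank\,\mbox{Cat}(\mF)\leq\rank_{SB}(\mF)$ analytically: it picks a sequence $(u_1^k)^{\otimes m}+\cdots+(u_r^k)^{\otimes m}\to\mF$, notes $\rank\,\mbox{Cat}\big((u_i^k)^{\otimes m}\big)\leq 1$ and the linearity of $\mbox{Cat}(\cdot)$, and concludes by lower semicontinuity of matrix rank under limits; this implicitly uses that points of the Zariski closure $\sig_r$ are Euclidean limits of sums of $r$ powers (true over $\cpx$ for images of polynomial maps, but left unstated). You instead stay entirely in the Zariski topology: the locus $\{\mF:\rank\,\mbox{Cat}(\mF)\leq r\}$ is cut out by $(r+1)\times(r+1)$ minors, hence closed and scaling-invariant, contains all rank-$\leq r$ tensors by the rank-one computation plus linearity, and therefore contains $\sig_r$ by the very definition of $\sig_r$ as a Zariski closure; taking $r=\rank_{SB}(\mF)$ finishes. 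Your argument avoids both the semicontinuity step and the constructible-set/Euclidean-closure issue, at the mild cost of invoking closedness of determinantal loci; the paper's argument is shorter if one is willing to take those analytic facts for granted. The second inequality is handled the same way in both (immediate from \reff{df:bordrank}). One small point to tidy: your factorization $\mbox{Cat}(u^{\otimes m})=[u]_{\lfloor m/2\rfloor}[u]_{\lceil m/2\rceil}^T$ ``after the affine normalization'' presumes $u_0\neq 0$; to cover all $u\neq 0$ use the homogeneous indexing \reff{Cat:m-k:k}, where $\mbox{Cat}^{m-k,k}(u^{\otimes m})=(u^{\vartheta+\theta})_{|\vartheta|=m-k,\,|\theta|=k}$ is the outer product of the two Veronese vectors of $u$, hence of rank at most one in every case (and the inequality only needs ``at most one'').
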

\begin{proof}
The second inequality in \reff{rk:Cat<=B<=S} follows from
\reff{df:bordrank}. We prove the first one.
For all $r$ with $\mF \in \sig_r$,
there exists a sequence of tuples $\{(u_1^k,\ldots, u_r^k)\}_{k=1}^\infty$ such that
\[
(u_1^k)^{\otimes m}+\cdots +(u_r^k)^{\otimes m}  \to \mF,
\quad \mbox{ as } \quad
k\to \infty.
\]
Each $\rank\,\mbox{Cat}\big( (u_i^k)^{\otimes m} \big) = 1$
if $u_i^k \ne 0$. For all $r$ with $\mF \in \sig_r$, {\small
\[
\mbox{Cat}\Big( \sum_{i=1}^r (u_i^k)^{\otimes m} \Big) =
\sum_{i=1}^r \mbox{Cat}\big( (u_i^k)^{\otimes m} \big),
\]
\[
\rank\, \mbox{Cat}\big(  \mF \big) \leq \lim_{k\to \infty}
\rank\, \mbox{Cat}\Big( \sum_{i=1}^r (u_i^k)^{\otimes m} \Big)
\leq r.
\]\noindent}So,
the first inequality in \reff{rk:Cat<=B<=S} is true.
\end{proof}

Lemma~\ref{lm:relF:ranks} can be used to determine
tensor ranks. If $\rank\,\mbox{Cat}(\mF)=r$
and $\mF$ has a decomposition of length-$r$,
then $\rank_S(\mF)=r$ by Lemma~\ref{lm:relF:ranks}.

As we have mentioned in \S\ref{sbsc:rc},
the concept of generating polynomials
is closely related to apolarity.
Recall the apolarity as in \reff{ap:poF=0} and the apolar ideal
$\mbox{Ann}(\mF)$ as in \reff{df:apo:ideal}.
Their relationship is stated in the following proposition.

\begin{prop} \label{pro:apo<=>gen}
Let $\mF \in \mt{S}^m(\cpx^{n+1})$ and $g\in \cpx[x]_k$ with $k\leq m$.
Then $g$ is a generating polynomial for $\mF$
if and only if its homogenization
$\tilde{g}(\tilde{x}) :=  x_0^k g(x/x_0)$
is apolar to $\mF$ (i.e., $\tilde{g} \in \mbox{Ann}(\mF)$).
\end{prop}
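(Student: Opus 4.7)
The plan is to expand both sides in terms of the monomial coefficients of $\mF$ and compare coefficients. The statement is essentially a bookkeeping identity between the differential pairing used to define apolarity and the bilinear pairing $\langle\cdot,\cdot\rangle$ used to define generating polynomials; the only real content is tracking how dehomogenization (setting $x_0 = 1$) relates the two monomial indexings.

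First I would write $g \in \cpx[x]_k$ as $g(x) = \sum_{\alpha \in \N_k^n} g_\alpha x^\alpha$, so that its homogenization is
\[
\tilde g(\tilde x) \,=\, \sum_{\alpha \in \N_k^n} g_\alpha\, x_0^{\,k-|\alpha|} x^\alpha \,=\, \sum_{|\gamma|=k} \tilde g_\gamma\, \tilde x^\gamma,
\]
where for $\gamma = (\gamma_0,\gamma_1,\ldots,\gamma_n)$ with $|\gamma|=k$ we have $\tilde g_\gamma = g_{(\gamma_1,\ldots,\gamma_n)}$. Using the Waring expansion \reff{F(tdx)=WarExpr} and differentiating monomially, $\partial^\gamma \tilde x^\theta = \frac{\theta!}{(\theta-\gamma)!}\tilde x^{\theta-\gamma}$ when $\theta \geq \gamma$, a direct computation gives
\[
\tilde g \circ \mF \,=\, \sum_{|\vartheta|=m-k} \frac{m!}{\vartheta!}\Big(\sum_{|\gamma|=k} \tilde g_\gamma\, \mF_{\vartheta+\gamma}\Big)\tilde x^\vartheta.
\]
Since the prefactors $\frac{m!}{\vartheta!}$ are nonzero, $\tilde g \circ \mF \equiv 0$ is equivalent to
\[
\sum_{|\gamma|=k} \tilde g_\gamma\, \mF_{\vartheta+\gamma} \,=\, 0 \qquad \text{for every } \vartheta \text{ with }|\vartheta|=m-k.
\]

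Next I would translate this into the dehomogenized indexing. Given $\vartheta = (\vartheta_0,\vartheta_1,\ldots,\vartheta_n)$ with $|\vartheta|=m-k$, set $\beta := (\vartheta_1,\ldots,\vartheta_n) \in \N_{m-k}^n$. The contributions on the left side come from $\gamma$ of the form $(k-|\alpha|,\alpha)$ with $\alpha \in \N_k^n$, and then $\vartheta + \gamma$ has $n$-tuple tail $\beta + \alpha$, so under the indexing in \reff{index:F:af} it reads $\mF_{\beta+\alpha}$. Hence the displayed condition becomes
\[
\sum_{\alpha \in \N_k^n} g_\alpha\, \mF_{\alpha+\beta} \,=\, 0 \qquad \text{for every } \beta \in \N_{m-k}^n,
\]
which by definition \reff{op:scrL:F} is exactly $\langle g\cdot x^\beta, \mF\rangle = 0$ for all $\beta \in \N_{m-k}^n$, i.e.\ the defining condition \reff{df:GRpq} for $g$ to be a generating polynomial. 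Conversely, as $\vartheta$ ranges over all indices with $|\vartheta|=m-k$, the tail $\beta$ ranges over all of $\N_{m-k}^n$, so the two families of equations coincide.

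The only mild subtlety is making sure every index $\vartheta+\gamma$ appearing in the differentiated Waring expansion corresponds to a valid entry of $\mF$ and matches the tensor entry accessed by $\langle g x^\beta, \mF\rangle$. This is automatic from $|\vartheta+\gamma| = m$ and $|\alpha|+|\beta| \leq k + (m-k) = m$, but I would verify it explicitly to confirm that the correspondence $(\vartheta,\gamma) \leftrightarrow (\beta,\alpha)$ is a bijection between the two index sets. Beyond that, the argument is a clean coefficient comparison and should present no real obstacle.
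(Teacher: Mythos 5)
Your proof is correct and is essentially the paper's argument: both reduce the statement to the coefficient identity that $\tilde g\circ\mF=0$ is equivalent to $\sum_{\alpha}g_\alpha\,\mF_{\alpha+\beta}=0$ for all $\beta\in\N^n_{m-k}$, which is the system \reff{df:GRpq}. The only difference is presentational — the paper cites the catalecticant matrix $\mbox{Cat}^{m-k,k}(\mF)$ as the representing matrix of $C_{\mF}(m-k,k)$ and reindexes it by dehomogenization, whereas you obtain the same identity by differentiating the Waring expansion \reff{F(tdx)=WarExpr} explicitly.
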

\begin{proof}
Write the polynomial $g$ in the standard expansion
\[
g(x) = \sum_{ \af =(\af_1, \ldots, \af_n)  \in \N_k^n }
g_\af \cdot  x_1^{\af_1}  x_2^{\af_2}  \cdots  x_n^{\af_n} .
\]
Its homogenization is
\[
\tilde{g}(\tilde{x}) = \sum_{ \af =(\af_1, \ldots, \af_n)  \in \N_k^n }
g_\af \cdot x_0^{k - |\af|} \cdot x_1^{\af_1}  x_2^{\af_2}  \cdots  x_n^{\af_n} .
\]
Consider the linear mapping $C_{\mF}(m-k,k)$ as in \reff{map:cat:mk}.
Under the canonical bases of $\cpx[x]_k^{hom}$ and $\mt{S}^{m-k}(\cpx^{n+1})$
with respect to monomial indexing, its representing matrix
is the catalecticant matrix $\mbox{Cat}^{m-k,k}(\mF)$ as in \reff{Cat:m-k:k},
whose rows and columns are respectively indexed by monomials in $\tilde{x}$
of degrees equal to $m-k$ and $k$ respectively.
By dehomogenization (i.e., let $x_0=1$),
the rows and columns of $\mbox{Cat}^{m-k,k}(\mF)$ can be equivalently
indexed by monomials in $x$
of degrees less than or equal to $m-k$ and $k$ respectively.
If we use the same indexing for $\mF$, then
\[
\mbox{Cat}^{m-k,k}(\mF) = (\mF_{\af+\bt})_{|\bt| \leq m-k, |\af| \leq k}.
\]
The condition \reff{df:GRpq} is equivalent to the equation
\[
\mbox{Cat}^{m-k,k}(\mF) \cdot vec(g) = 0,
\]
where $vec(g) :=(g_\af)_{|\af| \leq k}$ is the coefficient vector of $g$.
The above equation means that the form $\tilde{g}$
belongs to the kernel of $C_{\mF}(m-k,k)$,
that is, $\tilde{g} \circ \mF = 0$.
Therefore, $g$ is a generating polynomial if and only if
$\tilde{g}$ is apolar to $\mF$.
\end{proof}

\subsection{The fiber of decompositions}
\label{sbsc:fiber(F)}

Let $\sig_r$ be as in \S\ref{sbsc:symtsr}.
Denote by $\P(\cpx^{n+1})^r$ the projectivization of
the vector space $(\cpx^{n+1})^r$.
We define the fiber of length-$r$ decompositions of $\mF$ as
\be \label{df:fiber(F)}
\mbox{fiber}_r(\mF) := \Big \{ (u_1,\ldots,u_r) \in \P(\cpx^{n+1})^r  \mid
\mF = \Sig_{i=1}^r (u_i)^{\otimes m}
\Big\}.
\ee
(If $\rank_S(\mF)>r$, then $\mbox{fiber}_r(\mF)$ is empty.)
In fact, the fiber in \reff{df:fiber(F)}
is actually the fiber of the projection map from the incidence variety
to the embedded secant variety.
Each $(u_1,\ldots,u_r) \in \mbox{fiber}_r(\mF)$ is called
a {\it decomposing tuple} of $\mF$.
Clearly, if $(u_1,\ldots,u_r) \in \mbox{fiber}_r(\mF)$,
then every permutation of $(u_1,\ldots,u_r)$ also belongs to $\mbox{fiber}_r(\mF)$,
and $(\tau_1 u_1,\ldots, \tau_r u_r) \in \mbox{fiber}_r(\mF)$ for all
$\tau_1^m=\cdots=\tau_r^m=1$. Two decomposing tuples
$(u_1,\ldots, u_r), (\tilde{u}_1,\ldots, \tilde{u}_r)$ of $\mF$
are called {\it equivalent} if there exists
a permutation $(\nu_1,\ldots,\nu_r)$ of $(1,\ldots,r)$ and
unitary numbers $\tau_1^m=\cdots=\tau_r^m=1$ such that
$(\tilde{u}_1,\ldots,\tilde{u}_r) =
(\tau_1 u_{\nu_1}, \ldots, \tau_r u_{\nu_r})$
in the projective space $\P(\cpx^{n+1})^r$.
The set of all decomposing tuples, which are equivalent to each other,
is called a {\it decomposing class}.
The set of all decomposing classes in $\mbox{fiber}_r(\mF)$
is denoted as $\widetilde{\mbox{fiber}}_r(\mF)$.
So, $\widetilde{\mbox{fiber}}_r(\mF)$ represents the set of
all length-$r$ decompositions of $\mF$.
If the cardinality $L := |\widetilde{\mbox{fiber}}_r(\mF)|$ is finite,
then $\mF$ has $L$ distinct decompositions of length $r$.
If $L=1$, then $\mF$ has a {\it unique decomposition} of length $r$.

In computation, we often need to scale $u_i$
as $u_i = \tau (1, \theta_1, \ldots,\theta_n)$.
This usually requires $(u_i)_0 \ne 0$.
Denote the quasi-projective variety
\be \label{df:U0}
U_0 = \big \{ (u_1,\ldots,u_r) \in \P(\cpx^{n+1})^r:\,
(u_1)_0 \cdots (u_r)_0 \ne 0 \big \}.
\ee

\begin{prop} \label{pro:U0!=0}
Let $d:=r(n+1)-1 - \dim \sig_r$.
\bit
\item [(i)] If $d=0$, then $\mbox{fiber}_r(\mF) \subseteq U_0$
for a general $\mF \in \sig_r$.

\item [(ii)]  If $d>0$, then $\mbox{fiber}_r(\mF) \cap S \subseteq U_0$
for a general $\mF \in \sig_r$ and
for a general subspace $S \subseteq \P(\cpx^{n+1})^r$
of codimnension $d$.

\eit
\end{prop}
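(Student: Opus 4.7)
My plan is to reduce both statements to a dimension comparison via the affine summation map to (the cone over) $\sig_r$, exploiting that each coordinate condition $(u_i)_0 = 0$ cuts out a hyperplane. I would introduce the polynomial map $\tilde{\Phi}: (\cpx^{n+1})^r \to \mt{S}^m(\cpx^{n+1})$, $(u_1,\ldots,u_r) \mapsto \sum_{i=1}^r u_i^{\otimes m}$, whose image closure is the affine cone $\hat{\sig}_r$ over $\sig_r$, of dimension $\dim\sig_r+1$; by the fiber dimension theorem, a generic fiber of $\tilde\Phi$ has dimension $r(n+1) - \dim\sig_r - 1 = d$. For $i=1,\ldots,r$, let $\tilde V_i := \{(u_1,\ldots,u_r) \in (\cpx^{n+1})^r : (u_i)_0 = 0\}$, a coordinate hyperplane of codimension one. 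The preimage in $(\cpx^{n+1})^r$ of $\P(\cpx^{n+1})^r \setminus U_0$ is exactly $\tilde V_1 \cup \cdots \cup \tilde V_r$.

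For part (i), $d = 0$ gives $\dim \hat\sig_r = r(n+1)$, so
\[
\dim \tilde\Phi(\tilde V_i) \leq \dim \tilde V_i = r(n+1)-1 < \dim \hat\sig_r,
\]
and each $\overline{\tilde\Phi(\tilde V_i)}$ is a proper closed subvariety of $\hat\sig_r$. Taking the finite union over $i$, a general $\mF \in \sig_r$ lies outside $\bigcup_i \tilde\Phi(\tilde V_i)$, hence no decomposing tuple of $\mF$ has a vanishing zeroth coordinate, i.e.\ $\mbox{fiber}_r(\mF) \subseteq U_0$.

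For part (ii), let $\tilde S \subseteq (\cpx^{n+1})^r$ be the preimage of a general codimension-$d$ subspace $S \subseteq \P(\cpx^{n+1})^r$; then $\tilde S$ has codimension $d$. For generic $S$ the intersections are transverse, giving $\mathrm{codim}(\tilde V_i \cap \tilde S) = d+1$ and
\[
\dim \tilde\Phi(\tilde V_i \cap \tilde S) \leq r(n+1) - d - 1 < r(n+1) - d = \dim \hat\sig_r.
\]
The same argument as in part (i) yields $\mbox{fiber}_r(\mF) \cap S \subseteq U_0$ for a general $\mF \in \sig_r$. The principal obstacle is justifying this transversality: once the meaning of "general codimension-$d$ subspace" of the product $\P(\cpx^{n+1})^r$ is fixed (e.g.\ cut out by $d$ generic multilinear equations), a Bertini-type genericity argument ensures that the preimage meets each fixed coordinate hyperplane $\tilde V_i$ in the expected codimension; the remaining steps are routine dimension counts.
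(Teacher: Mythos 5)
Your argument is correct in substance, and for part (i) it is essentially the paper's own proof transported to the affine cone: the paper maps the hypersurface $V=\{(u_1)_0\cdots(u_r)_0=0\}\subseteq\P(\cpx^{n+1})^r$ into $\sig_r$ and notes its image has dimension at most $r(n+1)-2<\dim\sig_r$ when $d=0$; you do the same count with the coordinate hyperplanes $\tilde V_i$ and the cone $\hat\sig_r$. (One small step you should make explicit: since $\tilde V_i$ and $\tilde V_i\cap\tilde S$ are scaling-invariant and $\tilde\Phi(tu)=t^m\tilde\Phi(u)$, the closures of their images are cones, so their projectivizations are proper closed subvarieties of the irreducible $\sig_r$, which is what lets you pass from the cone to ``general $\mF\in\sig_r$''.)

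For part (ii) your route genuinely differs from the paper's, and the difference is in the order of the two genericity quantifiers. The paper fixes a general $\mF$ first: when $\overline{\rho(V)}=\sig_r$ it invokes the theorem on dimensions of fibers to get $\dim\rho^{-1}(\mF)=d-1$, and then a general codimension-$d$ subspace $S$ (allowed to depend on $\mF$) misses that $(d-1)$-dimensional set. You instead fix a general $S$ first, cut the bad locus by the codimension-$d$ cone $\tilde S$ to drop its dimension below $\dim\hat\sig_r$, and then choose $\mF$ general depending on $S$. Both are legitimate dimension counts, and your version buys a uniform argument with no case distinction and no fiber-dimension theorem; but it proves ``for general $S$, for general $\mF$ (depending on $S$)'' rather than the paper's ``for general $\mF$, for general $S$ (depending on $\mF$)''. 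The two statements are not formally interchangeable, and a crude incidence-variety count does not upgrade yours to the paper's order; to recover that order you would still need the paper's step bounding $\dim\bigl(\rho^{-1}(\mF)\cap V\bigr)$ by $d-1$ for general $\mF$. Since the paper's later analogous results (e.g., the generic subspaces in Theorem~\ref{thm:rcW:exist} and Proposition~\ref{thm:dim:C:g(F)}) follow the convention of choosing the subspace after the tensor, you should either state your quantifier order explicitly or add the fiber-dimension step to match the paper's.
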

\begin{proof} Let
$
V = \big \{ (u_1,\ldots,u_r) \in \P(\cpx^{n+1})^r:\,
(u_1)_0  \cdots  (u_r)_0 = 0 \big \},
$
a hypersurface in $\P(\cpx^{n+1})^r$.
Consider the mapping:
\[
\rho:\, V \to \sig_r, \quad (u_1,\ldots,u_r) \mapsto
(u_1)^{\otimes m}+\cdots +(u_r)^{\otimes m}.
\]
We always have $\dim \rho(V) \leq r(n+1)-2$.

(i) If $d=0$, the closure $\overline{\rho(V)}$ is a subvariety of $\sig_r$,
with codimension $\geq 1$.
The complement $X:= \sig_r\backslash \overline{\rho(V)}$ is a
nonempty Zariski open subset of the irreducible variety $\sig_r$.
For all $\mF \in X$ and all $(u_1,\ldots, u_r) \in \mbox{fiber}_r(\mF)$,
we must have $(u_1,\ldots, u_r) \in U_0$.
This proves that $\mbox{fiber}_r(\mF) \subseteq  U_0$
for general $\mF \in \sig_r$.

(ii) Suppose $d>0$. If $\dim \overline{\rho(V)} < \dim \sig_r$,
then $X =\sig_r\backslash \overline{\rho(V)}$ is
a nonempty Zariski open subset of $\sig_r$.
For $\mF \in X$, we must have $\mbox{fiber}_r(\mF) \subseteq U_0$.
If $\dim \overline{\rho(V)} = \dim \sig_r$,
then $\overline{\rho(V)} =\sig_r$, because $\sig_r$ is irreducible.
For a general $\mF \in \sig_r$, the preimage $\rho^{-1}(\mF)$ has dimension $d-1$
(cf.~\cite[Theorem~7, \S6, Chapter~I]{Sha:BAG1}).
So, for a general subspace $S$ of $\P(\cpx^{n+1})^r$,
with codimnension $d$, the intersection $\rho^{-1}(\mF) \cap S$ is empty,
which means that $\mbox{fiber}_r(\mF) \cap S \subseteq U_0$.
\end{proof}

\subsection{Solving polynomial systems}
\label{sbsc:polyf=0}

Let $f_1,\ldots, f_k \in \cpx[x]$ and $I=\langle f_1, \ldots, f_k \rangle$
be the ideal generated by them. Consider the polynomial system
\be \label{f1k==0}
f_1(x) = \cdots = f_k(x) = 0.
\ee
Assume that $I$ is zero-dimensional, i.e., the quotient space
$\cpx[x]/I$ is finitely dimensional.
The set of all complex solutions to \reff{f1k==0} is
{\it the variety $\mc{V}(I)$} of $I$.
%
%
The number of its complex solutions (counting their multiplicities)
is equal to the dimension of $\cpx[x]/I$. Let $r:=\dim \cpx[x]/I$.
For each $x_i$, define the multiplication mapping:
\[
\mathscr{N}_{x_i}:\, \cpx[x]/I \to \cpx[x]/I, \quad
p \mapsto x_i p.
\]
Let $N_{x_i}$ be the representing matrix of $\mathscr{N}_{x_i}$,
under a basis $\{b_1,\ldots,b_r\}$.
It is called the {\it companion matrix} of $\mathscr{N}_{x_i}$.
The companion matrices $N_{x_1}, \ldots, N_{x_n}$ commute with each other,
so they share common eigenvectors. Stickelberger's Theorem
(cf.~Sturmfels~\cite[Theorem~2.6]{Stu02}) implies that
\be \label{Stkbg:Eig}
\mc{V}(I) = \big\{ (\lmd_1, \ldots, \lmd_n): \,   \exists v \in \cpx^r\backslash \{0\}, \,\,
   N_{x_i} v = \lmd_i v \mbox{ for each } i \big \}.
\ee
Moreover, $I$ is radical (i.e., every polynomial vanishing on $\mc{V}(I)$
belongs to $I$) if and only if the cardinality
$|\mc{V}(I)|$ equals the dimension of $\cpx[x]/I$.

Computing common eigenvectors is usually not convenient.
A practical method for computing $\mc{V}(I)$
is applying a generic linear combination of $N_{x_1}, \ldots, N_{x_n}$,
proposed by Corless, Gianni and Trager \cite{CGT97}.
Choose generic numbers $\xi_1>0,\ldots, \xi_n>0$
and scale them as $\xi_1+\cdots+\xi_n=1$. Let
\[
N(\xi) := \xi_1 N_{x_1} + \cdots + \xi_n N_{x_n}.
\]
Then, compute its Schur decomposition as
\be \label{s2:sur:Mxi*W}
Q^* N(\xi) Q = T :=
\bbm
T_{11} &  T_{12}  &   \cdots      &  T_{1s}  \\
       &  T_{22}  &         &   T_{2s}    \\
       &          & \ddots &   \vdots \\
       &          &        &  T_{ss}
\ebm,
\ee
where $Q \in \cpx^{r \times r}$ is unitary,
$T \in \cpx^{r \times r}$ is upper triangular,
the diagonal of each block $T_{jj}$ is a constant
(i.e., $T_{jj}$ has only one eigenvalue),
and different $T_{jj}$ has distinct diagonal entries. Let
$
\widetilde{N}_{x_i} = Q^* N_{x_i} Q
$
for each $i=1,\ldots,n$.
Then, we can partition $\widetilde{N}_{x_i}$
into a block matrix in the same pattern as $T$:
\[
\widetilde{N}_{x_i} =
\bbm
N^{(i)}_{11} &  N^{(i)}_{12}  &   \cdots      &  N^{(i)}_{1s}  \\
       &  N^{(i)}_{22}  &         &   N^{(i)}_{2s}    \\
       &          & \ddots &   \vdots \\
       &          &        &  N^{(i)}_{ss}
\ebm.
\]
(As shown in \cite{CGT97}, $\widetilde{N}_{x_i}$
is also a block upper triangular matrix with same block pattern as for $T$.)
For $j=1,\ldots,s$, let
\[
u_j := \Big(
\mbox{trace}(N^{(1)}_{jj}), \ldots, \mbox{trace}(N^{(n)}_{jj})
\Big)/\mbox{size}(T_{jj}).
\]
Then, the above $u_1,\ldots, u_s$ are the solutions to \reff{f1k==0},
and the size of $T_{jj}$ is the multiplicity of $u_j$.
We refer to \cite{CGT97} for the details.

To get the companion matrices $N_{x_1},\ldots,N_{x_n}$,
we need a basis for $\cpx[x]/I$.
%
%
In this paper, we mostly use the following basis
\be \label{monls:grlex}
\mathbb{B}_0 := \Big\{
\underbrace{
1, \, x_1, \, \ldots, \, x_n, \, x_1^2, \, x_1x_2, \, \ldots
}_{\mbox{ first $r$ monomials }}
\Big\},
\ee
the set of first $r$ monomials listed in the graded lexicographic order.

\subsection{Consistency of polynomial systems}

For $\mathbb{B}_0$ as in \reff{monls:grlex}, let
\be \label{mscrB12}
\mathbb{B}_1 := \big( \mathbb{B}_0 \cup x_1 \mathbb{B}_0 \cup \cdots \cup x_n \mathbb{B}_0)
\backslash \mathbb{B}_0.
\ee
The set $\mathbb{B}_1$ is called the border of $\mathbb{B}_0$
in the literature (cf.~\cite{BCMT10,LLMRT13}).
For convenience, by $\bt \in \mathbb{B}_0$ (resp., $\af \in \mathbb{B}_1)$
we mean that $x^\bt \in \mathbb{B}_0$ (resp., $x^\af \in \mathbb{B}_1)$.
Let $\cpx^{ \mathbb{B}_0 \times \mathbb{B}_1 }$ be the space of all complex matrices
indexed by $ (\bt, \af)  \in \mathbb{B}_0 \times \mathbb{B}_1$.
For $\af \in \mathbb{B}_1$ and $G \in \cpx^{ \mathbb{B}_0 \times \mathbb{B}_1 }$,
denote the polynomials
\be \label{vphi:W-af}
\varphi[G, \af] :=   \sum_{ \bt  \in \mathbb{B}_0 }
G(\bt,\af)  x^\bt -  x^\af \,  \in \cpx[x].
\ee
The coefficients of $\varphi[G,\af]$ only depend on
$G(:,\af)$, the $\af$-th column of $G$. Consider the polynomial system:
\be \label{vphiaf(x)=0}
\varphi[G](x) := \big ( \varphi[G,\af](x) \big)_{\af \in \mathbb{B}_1} = 0.
\ee
For the monomial sets $\mathbb{B}_0, \mathbb{B}_1$ as above,
the set $\{ \varphi[G, \af] \}_{\af \in \mathbb{B}_1 }$
is a border basis of $\langle \varphi[G] \rangle$,
the ideal generated by the tuple $\varphi[G]$.
We refer to \cite{LLMRT13} for border bases and
applications in solving polynomial systems.

We present sufficient and necessary conditions on $G$ such that
\reff{vphiaf(x)=0} has $r$ complex solutions, counting multiplicities.
For each $x_i$, define the linear mapping $\mathscr{M}_{x_i}$:
\be \label{df:scrMxi}
\mathscr{M}_{x_i}: \, \cpx[x]/\langle \varphi[G] \rangle \to
\cpx[x]/\langle \varphi[G] \rangle, \qquad p \mapsto x_i p.
\ee
Let $e_i$ denote the $i$-th standard unit vector of $\N^n$.
Define the matrix $M_{x_i}(G) \in \cpx^{\mathbb{B}_0 \times \mathbb{B}_0}$ as follows
($\mu,\nu \in \mathbb{B}_0$):
\be \label{df:Mxi(W)}
M_{x_i}(G)_{\mu, \nu} =
\bca
1  &  \text{ if } x_i \cdot x^\nu \in \mathbb{B}_0, \, \mu = \nu + e_i, \\
0  &  \text{ if } x_i \cdot x^\nu \in \mathbb{B}_0, \, \mu \ne \nu + e_i, \\
G(\mu, \nu+e_i)  &  \text{ if } x_i \cdot x^\nu \in \mathbb{B}_1.
\eca
\ee
The matrix $M_{x_i}(G)$ is affine linear in $G$.

When $\varphi[G]$ has $r$ complex zeros (counting multiplicities),
we can show that $\mathbb{B}_0$ is a basis
of $\cpx[x]/\langle \varphi[G] \rangle$ and
$M_{x_i}(G)$ is the representing matrix for $\mathscr{M}_{x_i}$.
For such case, the mappings $\mathscr{M}_{x_i}$ commute, i.e.,
\be \label{MiMj=MjMi}
M_{x_i}(G)M_{x_j}(G) - M_{x_j}(G)M_{x_i}(G) = 0
\, \, (1 \leq i < j \leq n).
\ee
So, \reff{MiMj=MjMi} is a necessary condition
for \reff{vphiaf(x)=0} to have $r$ complex solutions.
Indeed, \reff{MiMj=MjMi} is also sufficient.

\begin{prop} \label{phi[W]<=>Mcmu}
Let $\mathbb{B}_0, \mathbb{B}_1$ be as in \reff{monls:grlex}-\reff{mscrB12}.
The polynomial system \reff{vphiaf(x)=0} has $r$ complex solutions
(counting multiplicities) if and only if
\reff{MiMj=MjMi} holds. Moreover, \reff{vphiaf(x)=0} has $r$
distinct complex solutions if and only if
$M_{x_1}(G)$,$\ldots$,$M_{x_n}(G)$ are simultaneously diagonalizable.
\end{prop}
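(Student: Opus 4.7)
The plan is to invoke the border-basis framework of Mourrain. First observe that since $\mathbb{B}_0$ consists of the first $r$ monomials in the graded lexicographic order, it is \emph{connected to} $1$: for every $x^\mu \in \mathbb{B}_0$ with $|\mu|\geq 1$, any index $i$ with $\mu_i>0$ gives $x^{\mu-e_i}\in \mathbb{B}_0$, because $x^{\mu-e_i}$ has strictly smaller total degree than $x^\mu$ and the order is graded, hence $x^{\mu-e_i}$ precedes $x^\mu$ in the enumeration. Moreover each $\varphi[G,\af]$ has $x^\af$ as its unique border-monomial term with remainder supported on $\mathbb{B}_0$, so $\{\varphi[G,\af]\}_{\af\in\mathbb{B}_1}$ is a border prebasis in the sense of \cite{LLMRT13}. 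A standard rewriting argument (iteratively substituting $x^\af \equiv \sum_{\bt\in\mathbb{B}_0} G(\bt,\af) x^\bt$ for each border monomial, and using connectedness to reduce the degree step by step) then shows that the residue classes of $\mathbb{B}_0$ always span the quotient $\cpx[x]/\langle \varphi[G]\rangle$; in particular $\dim \cpx[x]/\langle\varphi[G]\rangle \leq r$.

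For the first equivalence, recall that when the ideal is zero-dimensional the number of complex zeros counted with multiplicity equals the $\cpx$-dimension of the quotient; thus the system has $r$ solutions with multiplicity precisely when $\mathbb{B}_0$ is a basis of the quotient. By Mourrain's border-basis commutation theorem, $\mathbb{B}_0$ is a basis if and only if the formal multiplication matrices $M_{x_i}(G)$ of \reff{df:Mxi(W)} pairwise commute, and in that case each $M_{x_i}(G)$ is precisely the matrix of $\mathscr{M}_{x_i}$ in the basis $\mathbb{B}_0$. The ``only if'' direction is automatic, because the genuine operators $\mathscr{M}_{x_i}$ on any finite-dimensional commutative quotient commute, and by the basis property they must agree with $M_{x_i}(G)$.

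For the distinctness statement, I then invoke Stickelberger's theorem \reff{Stkbg:Eig}: once commutation is known, the common eigenvectors of $M_{x_1}(G),\ldots,M_{x_n}(G)$ are in bijection with the points of $\mc{V}(\langle\varphi[G]\rangle)$, with eigenvalues equal to the coordinates. The system has $r$ distinct solutions exactly when $|\mc{V}(\langle\varphi[G]\rangle)| = r = \dim \cpx[x]/\langle\varphi[G]\rangle$, i.e., when the ideal is radical; this is in turn equivalent to the existence of $r$ linearly independent common eigenvectors, which for a commuting family is the same as simultaneous diagonalizability.

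The main obstacle is the sufficiency in Mourrain's theorem, namely that the commutation \reff{MiMj=MjMi} forces $\mathbb{B}_0$ to be linearly \emph{independent} (not merely spanning) in the quotient. The clean route is to equip the vector space $\mathrm{span}_\cpx \mathbb{B}_0$ with a $\cpx$-algebra structure by declaring each $x_i$ to act via $M_{x_i}(G)$; commutativity is precisely what makes this a well-defined commutative $\cpx[x]$-algebra $A$, and one checks that each $\varphi[G,\af]$ acts as zero on $A$. This yields a $\cpx[x]$-module map $\cpx[x]/\langle\varphi[G]\rangle \to A$ inverse to the natural surjection in the other direction, forcing both to be isomorphisms of dimension $r$. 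This argument is the content of \cite{LLMRT13}, which I would cite rather than reproduce in full.
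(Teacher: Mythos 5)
Your proposal is correct in outline, but it reaches the conclusion by a genuinely different route than the paper. You verify that $\mathbb{B}_0$ (the first $r$ monomials in grlex order) is an order ideal connected to $1$, observe that $\{\varphi[G,\af]\}_{\af\in\mathbb{B}_1}$ is a border prebasis, and then invoke the border-basis commutation criterion of Mourrain (cited via \cite{LLMRT13}) to get ``$\mathbb{B}_0$ is a basis of $\cpx[x]/\langle\varphi[G]\rangle$ iff the formal multiplication matrices commute,'' finishing with the standard dimension count and Stickelberger/radicality for the distinctness statement. The paper deliberately avoids quoting that criterion (it remarks it gives a direct proof ``due to lack of a suitable reference'') and instead proves the key equivalence from scratch: it reformulates \reff{MiMj=MjMi} as Condition~\ref{cond:phi:cmut} (Lemma~\ref{lm:cd=cmmu}), proves by induction that all cross-reductions $x^\theta\varphi[G,\mu]-x^\tau\varphi[G,\nu]$ reduce to zero (Lemma~\ref{lm:crosred=0}), and then applies Buchberger's criterion to conclude that $\varphi[G]$ is a Gr\"obner basis for $\prec_{glx}$, whence $\mathbb{B}_0$ is a basis (Lemma~\ref{pro:bs=cmt}); the final step (multiplicity count, and radical $\Leftrightarrow$ simultaneous diagonalizability via Sturmfels) is the same in both arguments. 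Your route is shorter and modular, and your sketch of the sufficiency argument (make $\mathrm{span}\,\mathbb{B}_0$ a $\cpx[x]$-module via the commuting $M_{x_i}(G)$, check the ideal acts trivially, and invert the natural surjection) is the standard and correct one; its cost is that it outsources the core verification to the border-basis literature, whereas the paper's proof is self-contained and yields the slightly stronger byproduct that the generating polynomials form a Gr\"obner basis. Two small points to tighten: your spanning argument does not literally ``reduce the degree step by step,'' since rewriting a border monomial can produce $\mathbb{B}_0$-monomials of the same degree — termination should instead be argued via $\prec_{glx}$ (every replacement monomial is strictly smaller) or via the border index; and in the ``moreover'' part you should say explicitly that $r$ distinct solutions forces $\dim\cpx[x]/\langle\varphi[G]\rangle=r$, hence commutation, before invoking Stickelberger, so that both directions of that equivalence are covered.
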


A result similar to Proposition~\ref{phi[W]<=>Mcmu} is
Theorem~4.2 of Brachat~et al. \cite{BCMT10}, which studies
conditions guaranteeing that a linear functional $\Lambda$ on $\re[x]_m$
can be extened to a linear funcational $\tilde{\Lambda}$ on $\re[x]$,
such that the Hankel operator associated to $\tilde{\Lambda}$
has a given rank $r$.
%
%
The condition~\reff{MiMj=MjMi} is used a lot for
computing tensor decompositions
(see Algorithms~\ref{alg:GR:algebraic}, \ref{alg:rc:dcmp}).
We invite the readers to compare the conditions in this paper
guaranteeing the existence of a rank-$r$ tensor decomposition
to those in Sections~3 and 4 of Brachat~et al. \cite{BCMT10},
as well as those in Theorems~2.4, 3.5 and 5.4 of
Oeding and Ottaviani \cite{OedOtt13}.
For completeness of the paper, we give a straightforward proof
for Proposition~\ref{phi[W]<=>Mcmu} due to lack of a suitable reference.

Let $\prec_{glx}$ be the graded lexicographic ordering on monomials:
\[
1 \prec_{glx} x_1 \prec_{glx}  \cdots \prec_{glx} \,
x_n \prec_{glx} x_1^2 \prec_{glx} x_1x_2 \prec_{glx} \cdots.
\]
For two polynomials $f,g$, we say that $f$ and $g$ are equivalent
with respect to the set $\{ \varphi[G] \}$
(and write $f \equiv g$ mod $\{ \varphi[G] \}$),
if the remainder of $f-g$, divided by polynomials in the tuple $\varphi[G]$
under the ordering $\prec_{glx}$, is zero.
We refer to \cite{CLO07} for polynomial divisions.
There is an equivalent characterization for \reff{MiMj=MjMi}.

\begin{cond}\label{cond:phi:cmut}
For all $x^\gm \in \mathbb{B}_0$ and all $1 \leq i < j \leq n$,
$G$ satisfies:
\bit

\item [(i)] If  $x^{\gm+e_i} \in   \mathbb{B}_1$ and $x^{\gm+e_j} \in  \mathbb{B}_1$,
then
\[
x_i \varphi[G,\gm+e_j] \equiv x_j \varphi[G,\gm+e_i]
\quad \mbox{mod} \quad \{ \varphi[G] \}.
\]

\item [(ii)] If  $x^{\gm+e_i} \in \mathbb{B}_0$ and $x^{\gm+e_j} \in \mathbb{B}_1$,
then
\[
x_i \varphi[G,\gm+e_j] \equiv  \varphi[G, \gm+e_i+e_j]
\quad \mbox{mod} \quad \{ \varphi[G] \}.
\]

\eit
\end{cond}

\begin{lemma} \label{lm:cd=cmmu}
Condition~\ref{cond:phi:cmut} is equivalent to the equation \reff{MiMj=MjMi}.
\end{lemma}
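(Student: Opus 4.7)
The plan is to identify $M_{x_i}(G)$ as the matrix, in the basis $\mathbb{B}_0$, of a linear map $\mathscr{M}_{x_i}\colon V\to V$ on $V:=\mathrm{span}_{\cpx}(\mathbb{B}_0)$. By \reff{df:Mxi(W)}, this map sends $x^\nu\in\mathbb{B}_0$ to $x^{\nu+e_i}$ when $\nu+e_i\in\mathbb{B}_0$ and to $\sum_{\mu\in\mathbb{B}_0}G(\mu,\nu+e_i)x^\mu$ when $\nu+e_i\in\mathbb{B}_1$. In both situations $x_i\cdot x^\nu-\mathscr{M}_{x_i}(x^\nu)$ equals either $0$ or $\varphi[G,\nu+e_i]$, so $\mathscr{M}_{x_i}$ implements one step of the reduction of $x_i\cdot x^\nu$ modulo $\{\varphi[G]\}$ under $\prec_{glx}$. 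Hence \reff{MiMj=MjMi} is equivalent to the identity $\mathscr{M}_{x_i}\mathscr{M}_{x_j}(x^\gm)=\mathscr{M}_{x_j}\mathscr{M}_{x_i}(x^\gm)$ for every $x^\gm\in\mathbb{B}_0$ and every $i<j$, and I verify this identity by cases on the location of $\gm+e_i$ and $\gm+e_j$.

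If $x^{\gm+e_i},x^{\gm+e_j}\in\mathbb{B}_0$, then $x^{\gm+e_i+e_j}$ lies in $x_i\mathbb{B}_0\cap x_j\mathbb{B}_0\subseteq\mathbb{B}_0\cup\mathbb{B}_1$, and both sides of the commutator reduce to the same element of $V$ (either $x^{\gm+e_i+e_j}$ itself or $\sum_\mu G(\mu,\gm+e_i+e_j)x^\mu$), so the commutator vanishes automatically. In the mixed case $x^{\gm+e_i}\in\mathbb{B}_0$, $x^{\gm+e_j}\in\mathbb{B}_1$, I first note that $x^{\gm+e_i+e_j}\in\mathbb{B}_1$ automatically: if it lay in $\mathbb{B}_0$, the order-ideal property of $\mathbb{B}_0$ in $\prec_{glx}$ (it consists of the first $r$ monomials) would force the lower-degree $x^{\gm+e_j}$ into $\mathbb{B}_0$, a contradiction. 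For this case and the remaining case where both $x^{\gm+e_i},x^{\gm+e_j}\in\mathbb{B}_1$, I compute $[\mathscr{M}_{x_i},\mathscr{M}_{x_j}](x^\gm)$ explicitly as an element of $V$, and in parallel reduce the S-pair polynomial $x_i\varphi[G,\gm+e_j]-\varphi[G,\gm+e_i+e_j]$ (mixed case) or $x_i\varphi[G,\gm+e_j]-x_j\varphi[G,\gm+e_i]$ (both-in-$\mathbb{B}_1$ case) modulo the tuple $\varphi[G]$ under $\prec_{glx}$ until only $\mathbb{B}_0$-monomials remain. Since the leading monomial of $\varphi[G,\af]$ is $x^\af$ for $\af\in\mathbb{B}_1$, each reduction rewrites a $\mathbb{B}_1$-monomial via $G$, producing at most one further round of $\mathbb{B}_1$-monomials $x^{\bt+e_k}$ with $\bt\in\mathbb{B}_0$ that must be reduced once more. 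I expect the resulting normal form to match the commutator term by term, so that \emph{the normal form is zero} (i.e.\ Condition~\ref{cond:phi:cmut}) is equivalent to \emph{the commutator is zero} (i.e.\ \reff{MiMj=MjMi}).

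The main technical obstacle is the bookkeeping in this last verification. After expansion, $x_i\varphi[G,\gm+e_j]$ splits into the top-degree term $-x^{\gm+e_i+e_j}$ together with a mixture of $\mathbb{B}_0$-monomials $x^{\bt+e_i}$ (for $\bt+e_i\in\mathbb{B}_0$) and $\mathbb{B}_1$-monomials $x^{\bt+e_i}$ (for $\bt+e_i\in\mathbb{B}_1$), and the split depends on $G$. Making sure that these two groups, together with their counterparts from the other half of the S-pair, cancel exactly against the corresponding terms inside $\mathscr{M}_{x_i}\mathscr{M}_{x_j}(x^\gm)$ and $\mathscr{M}_{x_j}\mathscr{M}_{x_i}(x^\gm)$---rather than reducing only up to a nonzero element of $\langle\varphi[G]\rangle$---is the crux. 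The graded-lex order-ideal structure of $\mathbb{B}_0$ guarantees that the reduction procedure terminates in at most two rounds, which keeps the term-by-term comparison finite and tractable.
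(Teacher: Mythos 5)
Your proposal follows essentially the same route as the paper's proof: the paper introduces the linear maps on $\mbox{span}\{\mathbb{B}_0\}$ represented by $M_{x_i}(G)$, observes that each sends $x^{\gm}$ to $x^{\gm+e_i}+\varphi[G,\gm+e_i]$ (read as $x^{\gm+e_i}$ when $\gm+e_i\in\mathbb{B}_0$), and notes that Condition~\ref{cond:phi:cmut} is exactly commutation of these maps on the basis monomials, which is what your case analysis sets up. The bookkeeping you flag as the crux does close, and in a single rewriting round: in each of your two nontrivial cases the top monomials $x^{\gm+e_i+e_j}$ cancel in the S-pair, and replacing every remaining $\mathbb{B}_1$-monomial $x^{\bt+e_k}$ by $\sum_{\mu\in\mathbb{B}_0}G(\mu,\bt+e_k)x^{\mu}$ leaves only irreducible $\mathbb{B}_0$-monomials and reproduces $\mathscr{M}_{x_i}\mathscr{M}_{x_j}(x^{\gm})-\mathscr{M}_{x_j}\mathscr{M}_{x_i}(x^{\gm})$ term by term, so your plan is correct and matches the paper's argument.
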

\begin{proof}
For each $i$, define the linear mapping
$\psi_i:\, \mbox{span}\{\mathbb{B}_0\} \to  \mbox{span}\{\mathbb{B}_0\}$
whose representing matrix is $M_{x_i}(G)$, under the basis $\mathbb{B}_0$.
One can check that $\psi_i$ maps each $x^{\gm} \in \mathbb{B}_0$ to the polynomial
$
x^{\gm+e_i} + \varphi[G,\gm+e_i]
$
in $\mbox{span}\{\mathbb{B}_0\}$.
Condition~\ref{cond:phi:cmut} precisely requires that
the mappings $\psi_i$ commute,
which is equivalent to that the matrices $M_{x_i}(G)$
commute, i.e., \reff{MiMj=MjMi} holds.
\end{proof}

\begin{lemma} \label{lm:crosred=0}
Let $\mathbb{B}_0, \mathbb{B}_1$ be as in \reff{monls:grlex}-\reff{mscrB12}.
If Condition~\ref{cond:phi:cmut} holds, then
for all $x^\mu, x^\nu \in \mathbb{B}_1$ such that $x^\mu \prec_{glx} x^\nu$
and $x^\theta x^\mu = x^\tau x^\nu$, we have
\[
x^\theta \varphi[G,\mu] - x^\tau \varphi[G,\nu] \equiv 0
\quad \mbox{mod} \quad \{ \varphi[G] \}.
\]
\end{lemma}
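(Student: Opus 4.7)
The plan is to recognize this as a Buchberger-style S-polynomial criterion: the displayed identity is precisely the syzygy between the two border-basis elements $\varphi[G,\mu]$ and $\varphi[G,\nu]$ at their least common multiple $x^\theta x^\mu = x^\tau x^\nu$, and the assertion is that Condition~\ref{cond:phi:cmut}'s ``neighbor'' syzygies generate the full module of syzygies modulo $\{\varphi[G]\}$. My cleanest route is to invoke Lemma~\ref{lm:cd=cmmu} to convert Condition~\ref{cond:phi:cmut} into the commutativity relation \eqref{MiMj=MjMi} for the multiplication matrices $M_{x_i}(G)$ on $\mathrm{span}(\mathbb{B}_0)$, and then use this commutativity to show that reductions modulo $\{\varphi[G]\}$ are path-independent.

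Concretely, I would set up the substitution rule $x^\alpha \mapsto \sum_{\beta \in \mathbb{B}_0} G(\beta,\alpha)\,x^\beta$ for each $x^\alpha \in \mathbb{B}_1$ and iterate it to reduce any monomial $x^\delta$ to an element of $\mathrm{span}(\mathbb{B}_0)$. The commuting matrices $M_{x_i}(G)$ let me define, for every $\delta \in \N^n$, the operator $M_{x_1}(G)^{\delta_1}\cdots M_{x_n}(G)^{\delta_n}$ on $\mathrm{span}(\mathbb{B}_0)$ whose value on the coefficient vector of $1 \in \mathbb{B}_0$ is the canonical normal form of $x^\delta$; commutativity guarantees independence from the order of multiplications. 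Expanding $x^\theta\varphi[G,\mu] = \sum_{\beta \in \mathbb{B}_0} G(\beta,\mu)\,x^{\theta+\beta} - x^{\theta+\mu}$ and similarly for $x^\tau\varphi[G,\nu]$, and reducing each right-hand side to $\mathrm{span}(\mathbb{B}_0)$ via the substitution rule, the outputs must agree because $x^{\theta+\mu} = x^{\tau+\nu}$ literally as monomials; hence the original difference is $\equiv 0$ modulo $\{\varphi[G]\}$. The combinatorial shadow of this argument is an induction on $|\theta|+|\tau|$: peel off $x^\theta = x_i x^{\theta'}$ and apply case~(i) or case~(ii) of Condition~\ref{cond:phi:cmut}, depending on whether $x^{\mu+e_i}$ lies in $\mathbb{B}_1$ or in $\mathbb{B}_0$, to trade $x_i\varphi[G,\mu]$ for either $x_j\varphi[G,\mu']$ or $\varphi[G,\mu+e_i]$ and so decrease the inductive parameter.

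The main obstacle is bookkeeping in this peeling induction: after one step the new border index can land inside $\mathbb{B}_0$, on $\mathbb{B}_1$, or outside $\mathbb{B}_0 \cup \mathbb{B}_1$ altogether, and each case requires a different invocation of Condition~\ref{cond:phi:cmut} before the induction hypothesis applies. That the bookkeeping closes consistently is precisely the path-independence of reductions, which is equivalent to \eqref{MiMj=MjMi} via Lemma~\ref{lm:cd=cmmu}. I would therefore write the argument conceptually through the commuting operators, and leave the combinatorial case analysis on $\prec_{glx}$ implicit rather than performing it directly.
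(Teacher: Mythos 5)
Your plan diverges from the paper's proof, and as written it has a genuine gap at its central step. The relation $\equiv$ in this lemma is defined via the division algorithm: $f\equiv 0$ mod $\{\varphi[G]\}$ means the remainder of $f$ upon division by the tuple $\varphi[G]$ under $\prec_{glx}$ is zero. Your argument computes a ``normal form'' by a different procedure (start from $1$, multiply by one variable at a time, and reduce immediately using the matrices $M_{x_i}(G)$); commutativity \reff{MiMj=MjMi} makes \emph{that} procedure order-independent, but it does not by itself identify its output with the division-algorithm remainder, nor does it show that the remainder of $x^\theta\varphi[G,\mu]-x^\tau\varphi[G,\nu]$ is zero. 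The missing bridge is exactly confluence of reduction modulo $\{\varphi[G]\}$, i.e.\ the border/Gr\"obner basis property -- which is what this lemma is for: in the paper it is the technical input used in Lemma~\ref{pro:bs=cmt} to verify Buchberger's criterion. So the sentence ``the outputs must agree \dots hence the original difference is $\equiv 0$'' assumes a path-independence of reductions that is essentially the statement being proved, and the ``combinatorial case analysis on $\prec_{glx}$'' that you propose to leave implicit is, in the paper's route, the entire proof (an induction on $\deg(x^\tau)$ applying items (i) and (ii) of Condition~\ref{cond:phi:cmut} in several carefully distinguished cases).

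Your operator idea can be salvaged, but only by completing it into a genuinely different argument that you have not carried out. Concretely: using that $\mathbb{B}_0$ is closed under divisors, define the linear map $\sigma:\cpx[x]\to\mathrm{span}\{\mathbb{B}_0\}$ by $\sigma(x^\delta)=M_{x_1}(G)^{\delta_1}\cdots M_{x_n}(G)^{\delta_n}[1]_{\mathbb{B}_0}$ (well defined by \reff{MiMj=MjMi} via Lemma~\ref{lm:cd=cmmu}); check $\sigma(x^\beta)$ is the coordinate vector of $x^\beta$ for $x^\beta\in\mathbb{B}_0$ and $\sigma(x^\alpha)=G(:,\alpha)$ for $x^\alpha\in\mathbb{B}_1$, so that $\sigma$ annihilates every $x^\gamma\varphi[G,\alpha]$ and hence all of $\langle\varphi[G]\rangle$, while being injective on $\mathrm{span}\{\mathbb{B}_0\}$; conclude $\mathrm{span}\{\mathbb{B}_0\}\cap\langle\varphi[G]\rangle=\{0\}$; finally observe that any division remainder of $x^\theta\varphi[G,\mu]-x^\tau\varphi[G,\nu]$ lies both in $\mathrm{span}\{\mathbb{B}_0\}$ (since every monomial outside $\mathbb{B}_0$ is divisible by some $x^\alpha\in\mathbb{B}_1$) and in the ideal, hence is zero. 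None of these steps appears in your sketch, and without them the inference from commutativity to the stated $\equiv 0$ does not go through; with them, you would in effect be proving the ``if'' direction of Lemma~\ref{pro:bs=cmt} first and deriving the present lemma from it, which inverts the paper's logical order but is not circular.
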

\begin{proof}
Since $x^\mu \prec_{glx} x^\nu$, we must have $x^\theta \succ_{glx} x^\tau$.
So, $\deg(x^\theta) \geq \deg(x^\tau)$.
We prove the lemma by induction on $\deg(x^\tau)$.
In the following, we omit the writing ``mod $\{ \varphi[G] \}$"
for the equivalent relation $\equiv$.

\smallskip \noindent
{\it Base Step: $\mathit{\deg(x^\tau)=0}$} \,  Then
$x^\theta x^\mu = x^\nu$ and $\deg(\theta)>0$.
Since $x^\nu \in \mathbb{B}_1$,
$\nu = \nu^\prm+e_l$ for some $\nu^\prm \in \mathbb{B}_0$ and $l$.
By $x^\mu \in \mathbb{B}_1$, we know
$\deg(x^{\nu^\prm}) \leq \deg(x^\mu)$. Moreover, it holds that
$\deg(x^\theta) = 1$, otherwise we can get
\[
\deg(x^\nu) = 1 + \deg(x^{\nu^\prm}) \leq 1 + \deg(x^\mu)
< \deg(x^\theta) + \deg(x^\mu)= \deg(x^\nu),
\]
a contradiction. So, there exists $i$ such that
$
\theta = e_i, \, \nu = \mu + e_i.
$
Note that
\[
\nu = (\mu-e_l) + e_i + e_l, \quad
(\mu-e_l) + e_i \in \mathbb{B}_0.
\]
We also have $l \ne i$, because otherwise
it results in the contradiction $\mu \not\in \mathbb{B}_1$.
Since $(\mu-e_l) + e_i \geq 0$, the $l$-th entry of $(\mu-e_l)$ must be nonnegative.
So, $\mu-e_l \in \mathbb{B}_0$.
By item (ii) of Condition~\ref{cond:phi:cmut}, we can get
\[
\varphi[G,\nu] = \varphi[G,\mu+e_i] =
\]
\[
\varphi[G, (\mu-e_l)+e_i+e_l]
\equiv  x^{e_i} \varphi[G, (\mu-e_l) + e_l] =
x^{\theta} \varphi[G, \mu].
\]

\smallskip \noindent
{\it Induction Step: $\mathit{\deg(x^\tau)>0}$} \,
We can write $\tau = \tau^\prm + e_\ell$ for some $\ell$.
If $\theta-e_\ell \geq 0$,
then $x^{\theta-e_\ell} x^\mu = x^{\tau^\prm} x^\nu$
and $\deg(x^{\tau^\prm}) < \deg(x^\tau)$.
By the induction, we have
\[
x^\tau \varphi[G,\nu] = x^{e_\ell} x^{\tau^\prm} \varphi[G,\nu] \equiv
x^{e_\ell} x^{\theta-e_\ell} \varphi[G,\mu] =
x^{\theta} \varphi[G,\mu].
\]
If $\theta-e_\ell \not\geq 0$, from $\theta+\mu = \tau + \nu$,
we know $\mu - e_\ell \geq 0$. Note that
$\mu - e_\ell \in \mathbb{B}_0 \cup \mathbb{B}_1$,
$x^{\theta} x^{\mu-e_\ell } = x^{\tau^\prm} x^\nu$,
and $\deg(x^{\tau^\prm}) < \deg(x^\tau)$.
\bit
\item  Suppose $\mu - e_\ell \in \mathbb{B}_1$. By the induction, we can get
\[
x^\tau \varphi[G,\nu] = x^{e_\ell} x^{\tau^\prm} \varphi[G,\nu] \equiv
x^{e_\ell} x^{\theta} \varphi[G,\mu-e_\ell]=
\]
\[
x^{\theta} x^{e_\ell} \varphi[G,\mu-e_\ell] \equiv
x^{\theta} \varphi[G,\mu].
\]
In the last equality above, the item (ii) of
Condition~\ref{cond:phi:cmut} is applied.

\item Suppose $\mu - e_\ell \in \mathbb{B}_0$.
Write $\theta = e_{k_1} + \cdots + e_{k_t}$ with
$k_1 \leq \cdots \leq k_t$.
Because $x^{\theta}  x^{\mu-e_\ell} \succ_{glx} x^{\nu} \in \mathbb{B}_1$,
there exists $j$ such that
\[
\mu-e_\ell + e_{k_1} + \cdots + e_{k_{j-1}} \in \mathbb{B}_0, \quad
\mu-e_\ell + e_{k_1} + \cdots + e_{k_j} \in \mathbb{B}_1.
\]
Let $\theta_0 = e_{k_1} + \cdots + e_{k_{j-1} }$,
$\theta_1 = e_{k_1} + \cdots + e_{k_j }$.
Then, $\theta - \theta_0 \geq 0$ and
$\theta - \theta_1 \geq 0$.
Let $\theta_2 := \theta - \theta_1$.
Note that $x^{\mu} \in \mathbb{B}_1$ and
$x^{\mu+\theta_0} \in \mathbb{B}_1$.
By repeatedly applying item (ii) of Condition~\ref{cond:phi:cmut},
we have $x^{\theta_0}\varphi[G,\mu] = \varphi[G,\mu+\theta_0]$ and
\[
x^{\theta} \varphi[G,\mu] = x^{\theta-\theta_0} x^{\theta_0} \varphi[G,\mu]
\equiv x^{\theta - \theta_0}  \varphi[G,\mu+\theta_0].
\]
By item (i) of Condition~\ref{cond:phi:cmut}, we have
\[
 x^{\theta-\theta_0}  \varphi[G,\mu+\theta_0] =
x^{\theta-\theta_0 -e_{k_j} }  x^{e_{k_j}} \varphi[G,\mu-e_\ell+\theta_0 + e_\ell]  \equiv
\]
\[
 x^{\theta_2 } x^{e_\ell} \varphi[G,\mu-e_\ell+\theta_0 + e_{k_j} ]
=
\]
%
%
\[
x^{\theta_2} x^{e_\ell} \varphi[G,\mu-e_\ell + \theta_1] =
x^{e_\ell} x^{\theta_2}  \varphi[G,\mu-e_\ell + \theta_1].
\]
Since $x^{\theta_2}  x^{\mu-e_\ell + \theta_1} = x^{\tau-e_\ell} x^{\nu}$,
by the induction, we get
\[
x^{\theta} \varphi[G,\mu] \equiv x^{e_\ell} x^{\theta_2}  \varphi[G,\mu-e_\ell + \theta_1] \equiv
x^{e_\ell} x^{\tau-e_\ell}  \varphi[G,\nu] =  x^{\tau}  \varphi[G,\nu].
\]
\eit
\end{proof}

\begin{lemma} \label{pro:bs=cmt}
Let $\mathbb{B}_0$, $\mathbb{B}_1$ be as in \reff{monls:grlex}-\reff{mscrB12}
and $G \in \cpx^{ \mathbb{B}_0 \times \mathbb{B}_1 }$.
Then $\mathbb{B}_0$ is a basis of $\cpx[x]/\langle \varphi[G] \rangle$
if and only if the equation \reff{MiMj=MjMi} holds.
\end{lemma}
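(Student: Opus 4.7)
The plan is to prove the two implications separately. The forward direction is essentially a verification: if $\mathbb{B}_0$ is a basis of $\cpx[x]/\langle\varphi[G]\rangle$, then the multiplication mapping $\mathscr{M}_{x_i}$ from \reff{df:scrMxi} is a well-defined $\cpx$-linear operator on this quotient. A case analysis on whether $x^{\nu+e_i}\in\mathbb{B}_0$ or $x^{\nu+e_i}\in\mathbb{B}_1$, using the definition of $\varphi[G,\alpha]$ in \reff{vphi:W-af}, shows that the matrix of $\mathscr{M}_{x_i}$ in the basis $\mathbb{B}_0$ is precisely $M_{x_i}(G)$ as in \reff{df:Mxi(W)}. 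Since the quotient is a commutative ring, $\mathscr{M}_{x_i}\mathscr{M}_{x_j}=\mathscr{M}_{x_j}\mathscr{M}_{x_i}$, which yields \reff{MiMj=MjMi}.

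For the converse, assume \reff{MiMj=MjMi}. I will separately establish spanning and linear independence of $\mathbb{B}_0$ in the quotient. Spanning is a rewriting argument that does not use the commuting hypothesis: because $\mathbb{B}_0$ consists of the first $r$ monomials in the graded-lex order, it forms an order ideal, so every monomial $x^\gamma\notin\mathbb{B}_0$ admits a factorization $x^\gamma=x^\theta\cdot x^\alpha$ with $x^\alpha\in\mathbb{B}_1$ (walk up the divisor chain from $1$ and take the first step that leaves $\mathbb{B}_0$). Rewriting with $\varphi[G,\alpha]$ gives $x^\gamma\equiv x^\theta\sum_{\beta\in\mathbb{B}_0}G(\beta,\alpha)x^\beta$ modulo $\langle\varphi[G]\rangle$, strictly decreasing the $\prec_{glx}$-leading monomial. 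Iteration terminates because $\prec_{glx}$ is a well-order, yielding a representative in $\mathrm{span}(\mathbb{B}_0)$.

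Linear independence is where the commuting hypothesis enters, via an evaluation homomorphism. Since the matrices $M_{x_1}(G),\ldots,M_{x_n}(G)$ pairwise commute, the assignment $x_i\mapsto M_{x_i}(G)$ extends uniquely to a $\cpx$-algebra homomorphism $\rho:\cpx[x]\to\cpx^{\mathbb{B}_0\times\mathbb{B}_0}$. Letting $\{\mathbf{1}_\beta\}_{\beta\in\mathbb{B}_0}$ be the standard basis of $\cpx^{\mathbb{B}_0}$, define $\pi:\cpx[x]\to\cpx^{\mathbb{B}_0}$ by $\pi(p)=\rho(p)\mathbf{1}_0$. By induction on $|\beta|$, decomposing $\beta=\beta'+e_i$ with $\beta'\in\mathbb{B}_0$ (possible since $\mathbb{B}_0$ is an order ideal) and using the first two clauses of \reff{df:Mxi(W)}, one verifies $\pi(x^\beta)=\mathbf{1}_\beta$ for every $\beta\in\mathbb{B}_0$. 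Similarly, for $\alpha\in\mathbb{B}_1$ write $\alpha=\alpha'+e_i$ with $\alpha'\in\mathbb{B}_0$; the third clause of \reff{df:Mxi(W)} then gives $\pi(x^\alpha)=M_{x_i}(G)\mathbf{1}_{\alpha'}=\sum_{\mu\in\mathbb{B}_0}G(\mu,\alpha)\mathbf{1}_\mu$, so $\pi(\varphi[G,\alpha])=0$. Because $\pi$ is $\cpx[x]$-linear through $\rho$, it vanishes on the full ideal $\langle\varphi[G]\rangle$ and descends to a well-defined $\bar\pi$ on the quotient that sends the classes in $\mathbb{B}_0$ to the standard basis of $\cpx^{\mathbb{B}_0}$; these are linearly independent, so the classes of $\mathbb{B}_0$ in the quotient are too.

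The main obstacle is the bookkeeping in the reverse direction: checking that $\rho$ is genuinely well defined (this is where commutativity of the $M_{x_i}(G)$'s is used), that $\pi(x^\beta)=\mathbf{1}_\beta$ for $\beta\in\mathbb{B}_0$ is independent of the inductive decomposition, and that $\mathbb{B}_0$ being an order ideal always permits such a decomposition to stay inside $\mathbb{B}_0$. An alternative, equally valid route would be to use Lemma~\ref{lm:cd=cmmu} and Lemma~\ref{lm:crosred=0} to argue directly that the rewriting system associated to $\varphi[G]$ is confluent, and so has unique normal forms in $\mathrm{span}(\mathbb{B}_0)$; but I expect the algebra-homomorphism proof above to be more transparent and shorter.
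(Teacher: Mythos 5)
Your proof is correct, but your ``if'' direction takes a genuinely different route from the paper's. The paper converts the commutation relations \reff{MiMj=MjMi} into Condition~\ref{cond:phi:cmut} (Lemma~\ref{lm:cd=cmmu}), uses Lemma~\ref{lm:crosred=0} to show that all cross-reductions of the $\varphi[G,\af]$ vanish, and then invokes Buchberger's criterion to conclude that $\{\varphi[G,\af]\}_{\af\in\mathbb{B}_1}$ is a Gr\"obner basis for $\prec_{glx}$; uniqueness of remainders then gives both spanning and independence of $\mathbb{B}_0$. You instead bypass the two technical lemmas entirely: spanning comes from the elementary rewriting argument (using only that $\mathbb{B}_0$ is an order ideal, so every monomial outside it is divisible by some $x^\af$ with $\af\in\mathbb{B}_1$, and that rewriting strictly decreases monomials in $\prec_{glx}$), and independence comes from the universal property of $\cpx[x]$: since the matrices $M_{x_i}(G)$ commute, $x_i\mapsto M_{x_i}(G)$ defines an algebra homomorphism $\rho$, and the evaluation $\pi(p)=\rho(p)\mathbf{1}_0$ kills every $\varphi[G,\af]$ (hence the ideal) while sending the monomials of $\mathbb{B}_0$ to the standard basis of $\cpx^{\mathbb{B}_0}$ --- I checked the inductive computations against \reff{df:Mxi(W)} and they are sound, including the existence of the decompositions $\bt=\bt'+e_i$ and $\af=\af'+e_i$ inside $\mathbb{B}_0$. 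This is the classical commuting-multiplication-matrices argument from the border basis literature (cf.~\cite{LLMRT13}), and it is shorter and more self-contained than the paper's; what it does not deliver, and what the paper's route buys as a by-product, is the explicit Gr\"obner/border basis property of $\varphi[G]$, which the paper's two auxiliary lemmas were built to establish. Either argument suffices for the way the lemma is used later (e.g., in Proposition~\ref{phi[W]<=>Mcmu}).
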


\begin{proof}
The ``only if" direction was observed earlier,
because the companion matrices $M_{x_1}(G)$,$\ldots$, $M_{x_n}(G)$ commute.
We now prove the ``if" direction.  Suppose \reff{MiMj=MjMi} is true.
By Lemma~ \ref{lm:cd=cmmu}, Condition~\ref{cond:phi:cmut} holds.
Then, by Lemma~\ref{lm:crosred=0},
for all $x^\mu, x^\nu \in \mathbb{B}_1$ with $x^\mu \prec_{glx} x^\nu$,
we have
\[
x^\theta \varphi[G,\mu] - x^\tau \varphi[G,\nu] \equiv 0
\quad \mbox{mod} \quad \{ \varphi[G] \}
\]
whenever $x^\theta x^\mu = x^\tau x^\nu$.
The leading term of $\varphi[G,\af]$ is the monomial $x^\af$.
By Buchberger's algorithm (cf.~\cite{CLO07}), one can show that the set
\[
\Phi:=\{ \varphi[G, \af]: \, \af \in \mathbb{B}_1 \}
\]
is a Gr\"{o}bner basis of the ideal $\langle \varphi[G] \rangle$,
under the ordering $\prec_{glx}$.
%
%
For each $p \in \mbox{span}\{ \mathbb{B}_0 \}$, the remainder of
dividing $p$ by $\Phi$, under $\prec_{glx}$, is $p$ itself.
This means that, for each $p \in \mbox{span}\{ \mathbb{B}_0 \}$,
$
p \in \langle \varphi[G] \rangle
$
if and only if $p=0$. So, the monomials in $\mathbb{B}_0$
are linearly independent in $\cpx[x]/\langle \varphi[G] \rangle$,
which is spanned by $\mathbb{B}_0$.
Hence, $\mathbb{B}_0$ is a basis of $\cpx[x]/\langle \varphi[G] \rangle$.
\end{proof}

\begin{proof}[Proof of Proposition~\ref{phi[W]<=>Mcmu}]
The ideal $\langle \varphi[G] \rangle$ is zero dimensional, because
$\cpx[x]/\langle \varphi[G] \rangle$
is spanned by the finite set $\mathbb{B}_0$.
By Lemma~\ref{pro:bs=cmt}, \reff{MiMj=MjMi} holds if and only if
$\mathbb{B}_0$ is a basis of $\cpx[x]/\langle \varphi[G] \rangle$.
For such case, the number of complex solutions of \reff{vphiaf(x)=0},
counting multiplicities, is $r$, the cardinality of $\mathbb{B}_0$
(cf.~Sturmfels~\cite[Prop.~2.1]{Stu02}).
Each $M_{x_i}(G)$ is the companion matrix for the linear mapping
$\mathscr{M}_{x_i}$ defined as in \reff{df:scrMxi}, under the basis $\mathbb{B}$.
The system \reff{vphiaf(x)=0} has $r$ distinct complex solutions if and only if
the ideal $\langle \varphi[G] \rangle$ is radical, which is then equivalent to that
$M_{x_1}(G)$,$\ldots$,$M_{x_n}(G)$ are simultaneously diagonalizable
(cf.~Sturmfels~\cite[Corollary~2.7]{Stu02}).
\end{proof}

\section{Properties of generating polynomials}
\label{sc:GenRel}
\setcounter{equation}{0}

Let $\mc{F} \in \mt{S}^m(\cpx^{n+1})$ be a tensor with the decomposition
\be  \label{dcpA:u^m}
\mc{F} = (u_1)^{\otimes m} + \cdots +  (u_r)^{\otimes m},
\ee
for $u_1,\ldots, u_r \in \cpx^{n+1}$.
We index each $u_i$ by $j=0,1,\ldots,n$.
If each $(u_i)_0 \ne 0$, let
\be \label{dhmg:ui:to:vi}
v_i = \big( (u_i)_1, \ldots, (u_i)_n \big)/ (u_i)_0, \quad
\lmd_i = \big( (u_i)_0 \big)^m.
\ee
Then, \reff{dcpA:u^m} can be reduced to the decomposition
%
%
\be \label{dcpA:lmd*v}
\mc{F} = \lmd_1 (1, v_1)^{\otimes m} + \cdots + \lmd_r (1, v_r)^{\otimes m}.
\ee
Clearly, we can get \reff{dcpA:u^m} from \reff{dcpA:lmd*v} by
letting $u_i = \sqrt[m]{\lmd_i}(1,v_i)$.
They are equivalent if each $(u_i)_0 \ne 0$,
which is generically true by Proposition~\ref{pro:U0!=0}.

%
%
For $v_1,\ldots,v_r$ in \reff{dcpA:lmd*v}, one can show that
all polynomials vanishing on them are generating polynomials for $\mF$.
Interestingly, the reverse is also generally true.
We show that if there exists a set of generating polynomials for $\mF$,
which have finitely many common simple zeros,
then \reff{dcpA:lmd*v} can be constructed from their common zeros.
This is demonstrated by Example~\ref{exmp:rk2}.

Let $\mathbb{B}_0,\mathbb{B}_1$ be the set of monomials, as in
\reff{monls:grlex}-\reff{mscrB12}.
For convenience, by writing $\af \in \mathbb{B}_1$
(resp., $\bt \in \mathbb{B}_0$) we mean that $x^\af \in \mathbb{B}_1$
(resp., $x^\bt \in \mathbb{B}_0$).
A symmetric tensor can be equivalently indexed by monomials
(or equivalently, by monomial powers, cf.~\S\ref{sbsc:rc}).
By Definition~\ref{def:GR},
for $G \in \cpx^{\mathbb{B}_0 \times \mathbb{B}_1 }$,
the polynomials $\varphi[G,\af] \in \cpx[x]$ ($\af \in \mathbb{B}_1$),
defined as in \reff{vphi:W-af},
are generating polynomials for $\mc{F}$ if and only if
(see \reff{op:scrL:F} for the notation $\langle \cdot, \cdot \rangle$)
\be   \label{<vphi:af:W,A>=0}
\big \langle \varphi[G,\af] \cdot x^{\gm}, \mc{F} \big \rangle = 0
\quad \forall \, \af \in \mathbb{B}_1, \,
\forall \, \gamma \in \N_{m-|\af|}^n.
\ee
It is a set of linear equations in $G$, for given $\mF$.
For $G \in \cpx^{\mathbb{B}_0 \times \mathbb{B}_1 }$, recall that
\be \label{nt:vphi[G]}
\varphi[G] \, := \, \big(\varphi[G,\af]: \, \af \in \mathbb{B}_1 \big).
\ee

\begin{defi}  \label{def:GRmat}
Let $\mathbb{B}_0,\mathbb{B}_1$ be as in \reff{monls:grlex}-\reff{mscrB12}
and $r=|\mathbb{B}_0|$.
We call $G \in \cpx^{\mathbb{B}_0 \times \mathbb{B}_1 }$
a generating matrix for $\mF$
if \reff{<vphi:af:W,A>=0} is satisfied.
The set of all generating matrices for $\mF$ is denoted as
\be \label{df:g(F)}
\mathscr{G}(\mF) = \left \{ G \in \cpx^{ \mathbb{B}_0 \times \mathbb{B}_1 }: \,
 \reff{<vphi:af:W,A>=0} \mbox{ is satisfied } \right \}.
\ee
\end{defi}

By the construction of $\mathbb{B}_1$ as in \reff{mscrB12},
its cardinality $|\mathbb{B}_1|$ is at most $rn$,
but we don't have a closed formula for it.
Recall the set $\sig_r$ in \S\ref{sbsc:symtsr} (the set of tensors in $\mc{S}^m(\cpx^{n+1})$
whose symmetric border ranks are at most $r$).
For general $\mF \in \sig_r$, the set $\mathscr{G}(\mF)$ is nonempty.
For special $\mF \in \sig_r$, $\mathscr{G}(\mF)$ might be empty.
If $\rank_S(\mF) > r$, $\mathscr{G}(\mF)$ is usually empty.
This can be implied by Theorem~\ref{thm:dstcV=>lmd}.

\subsection{Correspondence to tensor decompositions}

Generally, the tensor decomposition \reff{dcpA:lmd*v}
can be constructed from a generating matrix $G$.
We refer to \S\ref{sbsc:fiber(F)} for
fibers of tensor decompositions, equivalent tensor decompositions,
decomposing classes, and the sets
$\mbox{fiber}_r(\mF)$, $\widetilde{\mbox{fiber}}_r(\mF)$.

To compute the decomposition \reff{dcpA:u^m},
it is enough to compute \reff{dcpA:lmd*v}
for the vectors $v_1,\ldots,v_r$.
Define the {\it affine fiber} of length-$r$ decompositions of $\mF$ as
\be \label{fiber:F}
\mbox{afiber}_r(\mF) = \big\{ \mbox{perm}(v_1,\ldots, v_r) : \,
\reff{dcpA:lmd*v} \mbox{ is satisfied} \big\},
\ee
where $\mbox{perm}(v_1,\ldots, v_r)$ is the set of
all permutations of $(v_1,\ldots, v_r)$.
The set $\mbox{afiber}_r(\mF)$ is empty if $\rank_S(\mF) > r$.
The cardinality of $\mbox{afiber}_r(\mF)$
is the number of distinct decompositions
\reff{dcpA:lmd*v} for $\mF$. If all $(u_i)_0 \ne 0$
(this is generally true by Prop.~\ref{pro:U0!=0}), then
the decomposing class of $(u_1,\ldots,u_r) \in \mbox{fiber}_r(\mF)$
can be uniquely determined by $\mbox{perm}(v_1,\ldots, v_r)$,
as in \reff{dhmg:ui:to:vi}.
%
%
So, generally there exists a one-to-one correspondence
between $\widetilde{\mbox{fiber}}_r(\mF)$ and $\mbox{afiber}_r(\mF)$.
If $\widetilde{\mbox{fiber}}_r(\mF)$ is a finite set,
the number of decomposing classes of $\mF$
is generally equal to the cardinality of $\mbox{afiber}_r(\mF)$.
Therefore, finding one (resp., all)
decomposing class of $\mF$ is generally equivalent to
finding one (resp., all) element in $\mbox{afiber}_r(\mF)$.

First, we show that each
$\mbox{perm}(v_1,\ldots, v_r) \in \mbox{afiber}_r(\mF)$
can be determined by a $G \in \mathscr{G}(\mF)$.
%
%
For all $G$, the ideal $\langle \varphi[G] \rangle$ is zero-dimensional,
because $\cpx[x]/\langle \varphi[G] \rangle$
is spanned by the finite set $\mathbb{B}_0$.
If $\mathbb{B}_0$ is a basis, the polynomial system
\be \label{vphi[W](x)=0}
\varphi[G](x)=0
\ee
has $r$ complex solutions, counting multiplicities.
The ideal $\langle \varphi[G] \rangle$ is radical
if and only if the $r$ solutions are all distinct.
Let $D$ be the open set
($\det$ denotes the determinant of a square matrix)
\be \label{set:D}
D =\left\{(v_1, \ldots, v_r) \in (\cpx^n)^r : \,
\det \Big( [v_1]_{\mathbb{B}_0} \quad \cdots \quad  [v_r]_{\mathbb{B}_0}  \Big) \ne 0 \right\}.
\ee

\begin{theorem}  \label{thm:dstcV=>lmd}
For all $\mc{F} \in \mt{S}^m(\cpx^{n+1})$, we have the properties:
\bit

\item [(i)] If $G \in \mathscr{G}(\mF)$ and $v_1,\ldots,v_r$
are distinct zeros of $\varphi[G]$,
then $\mbox{perm}(v_1,\ldots,v_r) \in \mbox{afiber}_r(\mF)$.

\item [(ii)] If $\mbox{perm}(v_1,\ldots,v_r) \in \mbox{afiber}_r(\mF)$ and
$(v_1,\ldots,v_r) \in D$,
then there exists a unique $G \in \mathscr{G}(\mF)$ such that
$\varphi[G](v_1)=\cdots =\varphi[G](v_r) =0$.

\eit
\end{theorem}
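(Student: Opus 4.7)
The proof has two independent parts, and the plan is to handle them separately and then flag the main obstacle.

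For part (i), the plan is to reconstruct the coefficients $\lmd_i$ from a linear system indexed by $\mathbb{B}_0$ and then propagate them to the full tensor using the recursion encoded by $G$. Since $\varphi[G]=0$ has $r$ distinct zeros $v_1,\ldots,v_r$ while $\mathbb{B}_0$ spans $\cpx[x]/\langle\varphi[G]\rangle$ with $|\mathbb{B}_0|=r$, I would first conclude that $\mathbb{B}_0$ is a basis of the quotient, that $\langle\varphi[G]\rangle$ is radical, and that the matrix $V:=\big([v_1]_{\mathbb{B}_0},\ldots,[v_r]_{\mathbb{B}_0}\big)\in\cpx^{r\times r}$ is invertible. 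I define $(\lmd_1,\ldots,\lmd_r)$ to be the unique solution of $V^T(\lmd_1,\ldots,\lmd_r)^T=(\mF_\bt)_{\bt\in\mathbb{B}_0}$ and set $\mathcal{H}:=\mF-\sum_{i=1}^r\lmd_i(1,v_i)^{\otimes m}$. By construction, $\mathcal{H}_\bt=0$ for every $\bt\in\mathbb{B}_0$. Since each $(1,v_i)$ is a zero of the homogenization $\tilde{\varphi}[G,\af]$, Proposition~\ref{pro:apo<=>gen} yields $G\in\mathscr{G}\big((1,v_i)^{\otimes m}\big)$ for every $i$, and by linearity $G\in\mathscr{G}(\mathcal{H})$. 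I would then show $\mathcal{H}=0$ by induction in the graded-lex order $\prec_{glx}$ on indices $\theta\in\N_m^n$: the base case is $\theta\in\mathbb{B}_0$, and for $\theta\notin\mathbb{B}_0$ one writes $\theta=\af+\gm$ with $\af\in\mathbb{B}_1$ and $\gm\in\N_{m-|\af|}^n$, then applies the recursion
\[
\mathcal{H}_\theta \,=\, \sum_{\bt\in\mathbb{B}_0} G(\bt,\af)\,\mathcal{H}_{\bt+\gm}
\]
coming from $\langle\varphi[G,\af]\,x^\gm,\mathcal{H}\rangle=0$. Each index $\bt+\gm$ on the right lies in $\N_m^n$ and strictly precedes $\theta$, so the induction hypothesis gives $\mathcal{H}_\theta=0$.

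For part (ii), the argument is pure linear algebra. The hypothesis $(v_1,\ldots,v_r)\in D$ means precisely that the matrix $V$ defined above is invertible. For each $\af\in\mathbb{B}_1$, the equations $\varphi[G,\af](v_i)=0$ ($i=1,\ldots,r$) read $V^T G(\cdot,\af)=\big(v_i^\af\big)_{i=1}^{r}$ and thus determine the column $G(\cdot,\af)$ uniquely; this defines $G$ and establishes uniqueness. To verify $G\in\mathscr{G}(\mF)$, I would use $\langle q,(1,v)^{\otimes m}\rangle=q(v)$ for $q\in\cpx[x]_m$, which together with the decomposition $\mF=\sum_i\lmd_i(1,v_i)^{\otimes m}$ gives
\[
\big\langle \varphi[G,\af]\,x^\gm,\,\mF\big\rangle \,=\, \sum_{i=1}^{r}\lmd_i\,\varphi[G,\af](v_i)\,v_i^\gm \,=\, 0.
\]

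The main obstacle I expect is the monomial-order bookkeeping in the induction of part (i). The key facts to verify are that $\mathbb{B}_0$ is an initial segment of $\N^n$ in $\prec_{glx}$, which forces $|\bt|\le|\af|$ for all $\bt\in\mathbb{B}_0$ and $\af\in\mathbb{B}_1$ (hence $|\bt+\gm|\le|\af|+|\gm|\le m$), and that the strict precedence $\bt\prec_{glx}\af$ survives translation by $\gm$ to give $\bt+\gm\prec_{glx}\af+\gm=\theta$. These follow from the gradedness of $\prec_{glx}$ and the translation-invariance of its lex tiebreaker, but the case $|\bt|=|\af|$ (where $\bt$ and $\af$ sit in the top degree of $\mathbb{B}_0\cup\mathbb{B}_1$) has to be handled with care.
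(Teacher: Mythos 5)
Your part (ii) is essentially the paper's own argument: the hypothesis $(v_1,\ldots,v_r)\in D$ is exactly invertibility of $V$, the interpolation conditions $\varphi[G,\af](v_i)=0$ determine each column $G(:,\af)$ uniquely, and membership $G\in\mathscr{G}(\mF)$ is checked by evaluating $\langle x^\gm\varphi[G,\af],\mF\rangle$ through the decomposition, just as in the paper. Part (i) is where you take a genuinely different route: the paper homogenizes the $\varphi[G,\af]$, uses Proposition~\ref{pro:apo<=>gen} to place them in $\mbox{Ann}(\mF)$, and then invokes the apolarity lemma of Iarrobino--Kanev to produce $\lmd_1,\ldots,\lmd_r$, whereas you argue in the ``recursively generated'' (Curto--Fialkow flat-extension) style: the $r$ distinct zeros give $r$ independent evaluation functionals on the quotient spanned by $\mathbb{B}_0$, so $\mathbb{B}_0$ is a basis and $V$ is nonsingular; you match the first $r$ entries of $\mF$ by a suitable $\lmd$, and kill the difference tensor $\mathcal{H}$ entry by entry using the recursion of Proposition~\ref{pro:generate:F} and induction on $\prec_{glx}$. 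This is correct and a bit more elementary: it avoids the apolarity lemma and shows in passing that $r$ distinct zeros of $\varphi[G]$ are automatically $\mathbb{B}_0$-independent (a fact the paper only obtains for general $\mF$ via Theorem~\ref{thm:rcW:exist}), at the cost of the combinatorial bookkeeping you flagged --- which does all go through: every $\theta\in\N_m^n\setminus\mathbb{B}_0$ is divisible by some $\af\in\mathbb{B}_1$ (walk up a divisor chain from $1$ and take the first step leaving $\mathbb{B}_0$), every $\bt\in\mathbb{B}_0$ satisfies $x^\bt\prec_{glx}x^\af$ simply because $\mathbb{B}_0$ consists of the first $r$ monomials in this order and $x^\af\notin\mathbb{B}_0$, and multiplicativity of the term order gives $\bt+\gm\prec_{glx}\af+\gm=\theta$ with $|\bt+\gm|\leq|\theta|\leq m$, so the top-degree case $|\bt|=|\af|$ needs no special treatment. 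Two small repairs: the interpolation system in (i) should read $V\lmd=(\mF_\bt)_{\bt\in\mathbb{B}_0}$ rather than $V^T\lmd$ (with your convention that the columns of $V$ are $[v_i]_{\mathbb{B}_0}$), and you should record why $\mathbb{B}_0$ spans $\cpx[x]/\langle\varphi[G]\rangle$ (the same divisor-plus-$\prec_{glx}$ induction), since the nonsingularity of $V$, and hence the existence of $\lmd$, rests on it.
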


\begin{proof}
(i) For each $\af \in \mathbb{B}_1$, let $\widetilde{\varphi}[G,\af](\tilde{x})$
be the homogenization of $\varphi[G,\af](x)$,
in $\tilde{x} = (x_0, x_1, \ldots, x_n)$.
By Proposition~\ref{pro:apo<=>gen}, each form $\widetilde{\varphi}[G,\af](\tilde{x})$
is apolar to $\mF$. Clearly, the points
\[
(1, v_1), \ldots, (1, v_r)
\]
are pairwisely lindearly independent common zeros of
$\widetilde{\varphi}[G,\af](\tilde{x})$ in $\P^n$.
By the apolarity lemma (cf.~\cite[Lemma~1.15]{IarKan99}, there exist
$\lmd_1, \ldots, \lmd_r \in \cpx$ satisfying \reff{dcpA:lmd*v}, i.e.,
$\mbox{perm}(v_1,\ldots,v_r) \in \mbox{afiber}_r(\mF)$.

(ii) Consider the linear equations in $G$:
\[
\sum_{\bt \in \mathbb{B}_0} G(\bt, \af) (v_i)^\bt =  (v_i)^\af
\quad (\af \in \mathbb{B}_1, \, i=1,\ldots, r).
\]
They are equivalent to the matrix equation
\[
\bpm  [v_1]_{\mathbb{B}_0} & \cdots & [v_r]_{\mathbb{B}_0}  \epm^{T} G =
\bpm  [v_1]_{\mathbb{B}_1} & \cdots & [v_r]_{\mathbb{B}_1}  \epm^{T}.
\]
When $(v_1,\ldots,v_r) \in D$, it uniquely determines the matrix $G$.
For such $G$, $v_1,\ldots,v_r$ are common zeros of
$\varphi[G]$. For all $\af \in \mathbb{B}_1$
and all $\gm \in \N_{m-|\af|}^n$, we have
\[
\Big\langle x^\gm \varphi[G,\af], \mc{F} \Big \rangle =
\sum_{i=1}^k \lmd_i
\Big\langle x^\gm \varphi[G,\af], (1,v_i)^{\otimes m} \Big \rangle =
\sum_{i=1}^k \lmd_i (v_i)^\gm  \varphi[G,\af](v_i) = 0.
\]
That is, $G$ is a generating matrix for $\mF$.
\end{proof}

\begin{remark} \label{rmk:td:k<r}
If $[v_1]_{\mathbb{B}_0},\ldots, [v_k]_{\mathbb{B}_0}$
are linearly independent, with $k<r$, and
\[
\mF = \lmd_1 (1,v_1)^{\otimes m} + \cdots + \lmd_k (1,v_k)^{\otimes m},
\]
then there also exists $G \in \mathscr{G}(\mF)$ such that $\varphi[G]$
has $r$ distinct zeros. This can also be implied by Theorem~\ref{thm:dstcV=>lmd}(ii),
because, for generically chosen $v_{k+1},\ldots,v_r$, we still have
$(v_1, \ldots, v_r) \in D$ and
$\mbox{perm}(v_1, \ldots, v_r) \in \mbox{afiber}_r(\mF)$.
However, there are infinitely many such $G$ for this case.
\end{remark}

%
%
In Theorem~\ref{thm:dstcV=>lmd}(ii), if $(v_1,\ldots, v_r) \not \in D$, then
the desired $G$ there may not exist.
For instance, consider $\mathbb{B}_0=\{1,x_1,x_2\}$, $n=2$, $r=3$ and
\[
v_1 = (1,1), \quad v_2 = (2, 2), \quad v_3 = (3,3).
\]
One can check that $(v_1, v_2, v_3) \not\in D$ and
there is no $G$ such that the above $v_1,v_2,v_3$
are the common zeros of $\varphi[G]$.

%
%

\bigskip

Second, we show that for each
$\mbox{perm}(v_1,\ldots,v_r) \in \mbox{afiber}_r(\mF)$
it generally holds that $(v_1,\ldots,v_r) \in D$.
Recall the set $\sig_r$ defined as in \S\ref{sbsc:symtsr}.
%
%

\begin{theorem} \label{thm:rcW:exist}
Let $D$ be as in \reff{set:D} and $d=r(n+1)-1-\dim\sig_r$.
Then, for general $\mF \in \sig_r$, we have
$\dim\big( \mbox{afiber}_r(\mF) \big) = d$ and the properties:

\bit
\item [(i)] If $d=0$, then $(v_1,\ldots,v_r) \in D$ for all
$\mbox{perm}(v_1,\ldots,v_r) \in \mbox{afiber}_r(\mF)$.

\item [(ii)] If $d>0$, then  $(v_1,\ldots,v_r) \in D$ for all
$\mbox{perm}(v_1,\ldots,v_r) \in \mbox{afiber}_r(\mF)$
and $(v_1,\ldots,v_r) \in S$, for all
general subspaces $S \subseteq (\cpx^n)^r$ of codimension $d$.

\eit
\end{theorem}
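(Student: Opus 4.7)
The plan is to follow the dimension-counting strategy used in the proof of Proposition~\ref{pro:U0!=0}, but with the ``non-$U_0$'' hypersurface replaced by the ``non-$D$'' locus $Z := (\cpx^n)^r \setminus D$, which is cut out by the vanishing of $\det\bigl([v_1]_{\mathbb{B}_0}\;\cdots\;[v_r]_{\mathbb{B}_0}\bigr)$. The first thing to check is that $Z$ is a proper closed subvariety of $(\cpx^n)^r$: the $r$ monomials in $\mathbb{B}_0$ are linearly independent as polynomials, hence point-evaluation at a generic $r$-tuple $v_1,\ldots,v_r$ gives a linearly independent family of functionals on $\mathrm{span}(\mathbb{B}_0)$, so the determinant defining $Z$ is not identically zero on $(\cpx^n)^r$. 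I would state this as a short lemma and move on.

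Next, I would introduce the parameterization map
\[
\Phi: (\cpx^n)^r \times \cpx^r \longrightarrow \cpx^{\binom{n+m}{m}},
\qquad (v_i,\lmd_i) \mapsto \sum_{i=1}^r \lmd_i (1,v_i)^{\otimes m},
\]
whose image closure is the affine cone $\widehat{\sig}_r$ over $\sig_r$, of dimension $\dim\sig_r + 1$. Since the source has dimension $r(n+1)$, the fiber-dimension theorem yields that a general fiber of $\Phi$ has dimension $r(n+1) - \dim\sig_r - 1 = d$. For $(v_i)$ in the open set $D$, the rank-one symmetric tensors $(1,v_i)^{\otimes m}$ are linearly independent (this is essentially the same determinantal condition), so the $\lmd$-coordinates along the fiber are uniquely determined by $(v_i)$. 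Projection to $(\cpx^n)^r$ is therefore generically injective on the fiber, which gives $\dim(\mathrm{afiber}_r(\mF)) = d$ for general $\mF \in \sig_r$.

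I would then split on $d$. In the case $d=0$, the map $\Phi$ is dominant onto $\widehat{\sig}_r$ of dimension $r(n+1)$, while $\Phi|_{Z \times \cpx^r}$ has source of dimension at most $r(n+1)-1$, so its image is a proper closed subvariety of $\widehat{\sig}_r$. Hence for $\mF$ in the nonempty Zariski open complement, the preimage $\Phi^{-1}(\mF)$ avoids $Z \times \cpx^r$, which proves (i). In the case $d>0$, I would restrict $\Phi$ to $Z \times \cpx^r$; for general $\mF \in \sig_r$ the intersection $\Phi^{-1}(\mF) \cap (Z \times \cpx^r)$ has dimension at most $d-1$ (either because the restriction is not dominant onto $\widehat{\sig}_r$, in which case the intersection is empty for general $\mF$, or by the fiber-dimension theorem applied to the restriction). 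Projecting this to $(\cpx^n)^r$ gives a subvariety of dimension $\le d-1$, and a general linear subspace $S$ of codimension $d$ misses it, which yields (ii).

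The main obstacle is ensuring that the dimension bound is strict in the right way in both cases, and this rests essentially on the single fact that $Z$ is a proper hypersurface in $(\cpx^n)^r$; once this is secured, the remaining steps are formal applications of the fiber-dimension theorem together with the standard fact that a general codimension-$d$ linear subspace of affine space avoids any fixed subvariety of dimension at most $d-1$. A secondary subtlety is verifying that the $\lmd$-coordinates are (generically) determined by the $(v_i)$, so that dimensions on the $v$-side of $\Phi$ match those of $\mathrm{afiber}_r(\mF)$; this is handled by the Vandermonde-type linear independence of rank-one symmetric tensors attached to distinct points in $D$.
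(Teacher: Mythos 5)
Your proposal is correct and takes essentially the same route as the paper: parameterize decompositions by $(v_1,\ldots,v_r,\lmd)$, use the fiber-dimension theorem to get $\dim\mbox{afiber}_r(\mF)=d$, note that the locus where $\det\big([v_1]_{\mathbb{B}_0}\ \cdots\ [v_r]_{\mathbb{B}_0}\big)$ vanishes is a proper hypersurface so its image (resp.\ the fibers over it) has dimension too small, and finish with a general codimension-$d$ subspace missing a $(d-1)$-dimensional projection. The only cosmetic difference is that you work with the affine cone over $\sig_r$ and $\lmd\in\cpx^r$ where the paper uses $\sig_r$ itself and $\lmd\in\P^{r-1}$, which does not change the argument.
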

\begin{proof}
First, consider the mapping
\[
\psi: D \times \P^{r-1} \to \sig_r, \quad
\big((v_1, \ldots, v_r), \lmd \big) \mapsto
\mF = \Sig_{i=1}^r \, \lmd_i (1,v_i)^{\otimes m}
\]
where $\lmd=(\lmd_1,\ldots, \lmd_r) \in \P^{r-1}$.
The dimension of the domain $D \times \P^{r-1}$ is $r(n+1)-1$.
The closure of the image $\psi(D \times \P^{r-1})$ is $\sig_r$.
Hence, for a general $\mF \in \sig_r$, $\dim \psi^{-1}(\mF) = d$
(cf.~\cite[Theorem~7, \S6, Chap.I]{Sha:BAG1}).
Note that, for each $\big((v_1, \ldots, v_r), \lmd \big) \in \psi^{-1}(\mF)$,
$\lmd$ is uniquely determined by $(v_1, \ldots, v_r)$.
This implies that $\dim \big( \mbox{afiber}_r(\mF) \big) = d$
for general $\mF \in \sig_r$.

Next, let $D^c := (\cpx^n)^r\backslash D$
and consider the mapping
\[
\phi: D^c \times \P^{r-1} \to \sig_r, \quad
\big((v_1, \ldots, v_r), \lmd \big) \mapsto
\mF = \Sig_{i=1}^r  \, \lmd_i (1,v_i)^{\otimes m}.
\]
The domain $D^c \times \P^{r-1}$ of $\phi$ has dimension $r(n+1)-2$.

(i) If $d=0$, then $\dim (D^c \times \P^{r-1}) = \dim \sig_r -1$,
so $\dim \phi(D^c \times \P^{r-1}) \leq \dim \sig_r-1$.
So, $O:=\sig_r \backslash \overline{ \phi(D^c \times \P^{r-1})}$
is a nonempty Zariski open subset of $\sig_r$.
For all $\mF \in O$, we must have $(v_1,\ldots,v_r) \in D$ for all
$\mbox{perm}(v_1,\ldots,v_r) \in \mbox{afiber}_r(\mF)$.
%
%

(ii) Suppose $d>0$. If the closure $\overline{\phi(D^c \times \P^{r-1})}$
is a proper subvariety of $\sig_r$, then, for general $\mF \in \sig_r$,
$\mF \not\in \phi(D^c \times \P^{r-1})$ and the conclusion is clearly true.
If $\overline{\phi(D^c \times \P^{r-1})} = \sig_r$,
that is, $\phi(D^c \times \P^{r-1})$ is dense in $\sig_r$.
So, it contains a Zariski open subset $\mathscr{X}$ of $\sig_r$
(cf.~\cite[Theorem~6, \S5, Chap.I]{Sha:BAG1}).
For a general $\mF \in \mathscr{X}$, $\dim \phi^{-1}(\mF) = d-1$
(cf.~\cite[Theorem~7, \S6, Chap.I]{Sha:BAG1}).
The projection $T$ of $\phi^{-1}(\mF)$ into $(\cpx^n)^r$ has dimension $\leq d-1$.
Thus, if $S$ is a general subspace of codimension $d$ in $(\cpx^n)^r$,
then $S \cap T = \emptyset$, which proves the item (ii).
\end{proof}

Third, we show that the entire coordinates of a symmetric tensor
can be determined from its generating matrix and a few of its coordinates.

\begin{prop}  \label{pro:generate:F}
For all $\mc{F} \in \mt{S}^m(\cpx^{n+1})$,
if $G \in \mathscr{G}(\mF)$, then for all
$\af \in \mathbb{B}_1$ and for all $\gamma \in \N^n$
with $|\gamma|+ |\af| \leq m$, we have
\be \label{gr:G=>F:r}
\mF_{ \af + \gamma } = \sum_{ \bt \in \mathbb{B}_0 }
G(\bt, \af) \mF_{\bt + \gamma} .
\ee
\end{prop}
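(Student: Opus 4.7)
The plan is to unfold the definition of generating matrix and directly extract \reff{gr:G=>F:r} by pairing with $\mc{F}$. Since $G \in \mathscr{G}(\mF)$, by Definition~\ref{def:GRmat} we have
\[
\big\langle \varphi[G,\af] \cdot x^{\gamma}, \mc{F} \big\rangle = 0
\quad \forall\, \af \in \mathbb{B}_1, \ \forall\, \gamma \in \N_{m-|\af|}^n,
\]
which is precisely the hypothesis we must exploit.

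First I would substitute the explicit form $\varphi[G,\af] = \sum_{\bt \in \mathbb{B}_0} G(\bt,\af)\, x^\bt - x^\af$ from \reff{vphi:W-af} to obtain
\[
\varphi[G,\af]\cdot x^{\gamma} \, = \, \sum_{\bt \in \mathbb{B}_0} G(\bt,\af)\, x^{\bt+\gamma} - x^{\af+\gamma}.
\]
Since $|\gamma| + |\af| \leq m$ and every $\bt \in \mathbb{B}_0$ satisfies $|\bt| < |\af|$ (or at least $|\bt|\le m - |\gamma|$, as all involved monomials have total degree at most $m$), each monomial appearing on the right lies in $\N_m^n$, so the bilinear pairing $\langle \cdot, \mc{F} \rangle$ defined in \reff{op:scrL:F} applies directly.

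Next I would apply $\langle \cdot, \mc{F} \rangle$ term by term using bilinearity, reading off the coefficients of the two pieces via \reff{op:scrL:F}:
\[
\big\langle \varphi[G,\af]\cdot x^{\gamma}, \mc{F} \big\rangle
= \sum_{\bt \in \mathbb{B}_0} G(\bt,\af)\, \mF_{\bt+\gamma} - \mF_{\af+\gamma}.
\]
Setting the left-hand side equal to zero by the defining property of $\mathscr{G}(\mF)$ and rearranging yields
\[
\mF_{\af+\gamma} = \sum_{\bt \in \mathbb{B}_0} G(\bt,\af)\, \mF_{\bt+\gamma},
\]
which is \reff{gr:G=>F:r}. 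There is essentially no obstacle here: the statement is an immediate reformulation of the generating-matrix condition once one unravels the definition of $\varphi[G,\af]$. The only point needing mild attention is verifying that all multi-indices involved lie in $\N_m^n$ so that the bilinear pairing is well defined; this is guaranteed by the assumption $|\gamma| + |\af| \leq m$ and by the fact that monomials in $\mathbb{B}_0$ are drawn from the first $r$ monomials in graded lexicographic order (hence of degree at most $|\af|$ when $x^\af \in \mathbb{B}_1$ is adjacent to $\mathbb{B}_0$).
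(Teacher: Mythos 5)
Your proof is correct and is exactly the paper's argument: the identity \reff{gr:G=>F:r} is just the defining condition \reff{<vphi:af:W,A>=0} with $\varphi[G,\af]$ expanded via \reff{vphi:W-af} and the pairing \reff{op:scrL:F} applied term by term. (One tiny remark: the strict claim $|\bt|<|\af|$ is false in general, e.g.\ $x_1^2\in\mathbb{B}_0$ and $x_1x_2\in\mathbb{B}_1$, but your hedge $|\bt|\le|\af|\le m-|\gamma|$, which follows since $\mathbb{B}_0$ consists of the first $r$ monomials in graded lex order, is all that is needed.)
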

\begin{proof}
The equation \reff{gr:G=>F:r} is implied
by the definition of generating polynomials as in
\reff{<vphi:af:W,A>=0} and that $G$ is a generating matrix.
\end{proof}
In \reff{gr:G=>F:r}, if $\gamma=0$ is chosen,
then the entries $\mF_{\af}$ ($\af \in \mathbb{B}_1$)
can be obtained as linear combinations of
$\mF_{\bt}$ ($\bt \in \mathbb{B}_0$),
with the coefficients $G(\bt,\af)$.  If
$\gamma=e_i$ ($i=1,\ldots,n$) is chosen, then we can get
the entries $\mF_{\af+e_i}$ ($\af \in \mathbb{B}_1$)
by the same linear combinations of $\mF_{\bt+e_i}$.
By repeating this process, if bigger values of $|\gamma|$ is chosen,
we can obtain all the entries of $\mF$,
which can be finally expressed as linear combinations of
$\mF_{\bt}$ ($\bt \in \mathbb{B}_0$).
That is, the entire $\mF$ can be determined (or recovered)
from its first $r$ entries, by using the generating matrix $G$.
We have seen this phenomena for Example~\ref{exmp:rk2}.

\subsection{Consistent generating polynomials}

By Theorem~\ref{thm:dstcV=>lmd}, the decomposition \reff{dcpA:lmd*v}
can be obtained from a generating matrix $G$ such that
$\varphi[G]$ has $r$ distinct zeros.
We first investigate when $\varphi[G]$ has $r$
complex zeros (counting multiplicities).
By Proposition~\ref{phi[W]<=>Mcmu}, this is the case if and only if the matrices
$M_{x_1}(G), \ldots, M_{x_n}(G)$, defined as in \reff{df:Mxi(W)}, commute.
%
%
Recall the set $\sig_r$ as in \S\ref{sbsc:symtsr}.

\begin{defi}
Let $\mathbb{B}_0,\mathbb{B}_1$ be as in \reff{monls:grlex}-\reff{mscrB12}
and $r = |\mathbb{B}_0|$.
We call $G\in \cpx^{\mathbb{B}_0 \times \mathbb{B}_1 }$
a consistent generating matrix for $\mF$ if
$G \in \mathscr{G}(\mF)$ and $\varphi[G]$
has $r$ complex zeros (counting multiplicities).
For such $G$, $\varphi[G,\af] \,(\af \in \mathbb{B}_1)$
are called consistent generating polynomials for $\mF$.
\end{defi}

By Proposition~\ref{phi[W]<=>Mcmu},
$G$ is a consistent generating matrix for $\mF$
if and only if $G \in \mathscr{C} \cap \mathscr{G}(\mF)$
where the set $\mathscr{C}$ is given as
\be \label{df:cnst:C}
\mathscr{C}:=  \Big\{ G \in \cpx^{\mathbb{B}_0 \times \mathbb{B}_1 } : \,
[M_{x_i}(G), \, M_{x_j}(G)] = 0 \,
\forall \, i, j \Big\}.
\ee

\begin{lemma} \label{lm:dimC=rn}
For $\mathbb{B}_0,\mathbb{B}_1$ as in \reff{monls:grlex}-\reff{mscrB12},
we have $\dim \mathscr{C}= rn$.
\end{lemma}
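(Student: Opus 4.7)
The plan is to identify $\mathscr{C}$ birationally with the $r$-th symmetric product of $\cpx^n$, whose dimension is $rn$. Let $D \subset (\cpx^n)^r$ be the open set from \reff{set:D}, i.e.\ the locus where $V := ([v_1]_{\mathbb{B}_0} \mid \cdots \mid [v_r]_{\mathbb{B}_0})$ is invertible. I would define a morphism $\Psi: D \to \cpx^{\mathbb{B}_0 \times \mathbb{B}_1}$ by letting $\Psi(v_1, \ldots, v_r)$ be the unique solution $G$ of the linear system
\[
V^T G = \bigl( [v_1]_{\mathbb{B}_1} \mid \cdots \mid [v_r]_{\mathbb{B}_1} \bigr)^T.
\]
Componentwise, this equation is exactly $\varphi[G, \af](v_i) = 0$ for every $\af \in \mathbb{B}_1$ and every $i = 1, \ldots, r$, so $v_1, \ldots, v_r$ are $r$ distinct common zeros of $\varphi[G]$.

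To see $\Psi(D) \subseteq \mathscr{C}$, I would observe that the $r$ vectors $[v_i]_{\mathbb{B}_0}$ are linearly independent common left eigenvectors of $M_{x_1}(G), \ldots, M_{x_n}(G)$ (with eigenvalues $(v_i)_j$), so these matrices are simultaneously diagonalizable and therefore commute; by \reff{df:cnst:C}, $G \in \mathscr{C}$. The value $\Psi(v_1, \ldots, v_r)$ depends only on the unordered set $\{v_1, \ldots, v_r\}$, and conversely that set is recovered from $G$ as the common zero locus of $\varphi[G]$ (Proposition~\ref{phi[W]<=>Mcmu}). Hence $\Psi$ has generic fiber of size $r!$ over its image, so $\dim \overline{\Psi(D)} = \dim D = rn$, giving the lower bound $\dim \mathscr{C} \geq rn$.

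For the matching upper bound, I would show $\mathscr{C} \subseteq \overline{\Psi(D)}$. Given any $G_0 \in \mathscr{C}$, Proposition~\ref{phi[W]<=>Mcmu} guarantees that $\varphi[G_0](x) = 0$ has exactly $r$ complex solutions counting multiplicities. The idea is to deform the (possibly nonreduced) zero scheme: choose a continuous family $V_t = (v_1(t), \ldots, v_r(t)) \in D$ of tuples of distinct points that converges as $t \to 0$ to an ordered listing (with multiplicities) of the zero scheme of $\varphi[G_0]$. Setting $G_t := \Psi(V_t) \in \Psi(D)$, one checks that $G_t \to G_0$ by verifying that $G_0$ is the unique element of $\mathscr{C}$ whose zero scheme is this limiting configuration, so that the entries of $G_t$, which are rational functions of the $v_i(t)$ via Cramer's rule, have the correct limits.

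The main obstacle is this upper-bound step, specifically justifying the limit $G_t \to G_0$ when the zero scheme of $\varphi[G_0]$ is nonreduced; the continuity of $\Psi$ does not literally extend to the boundary of $D$ and one must appeal to a flatness or semicontinuity argument. A cleaner alternative is a tangent-space computation: at a point $G = \Psi(v_1, \ldots, v_r)$ with distinct $v_i$, linearize the quadratic commutation equations $[M_{x_i}(G), M_{x_j}(G)] = 0$ and show that the Zariski tangent space to $\mathscr{C}$ at $G$ has dimension exactly $rn$. Combined with the lower bound from $\Psi$, this forces $\dim \mathscr{C} = rn$ even without explicitly handling singular or non-smoothable points.
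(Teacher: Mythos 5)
Your lower bound is sound, and it is essentially the same device the paper uses for nondegeneracy: the interpolation map $\Psi$ on the set $D$ of \reff{set:D}, the observation that each $[v_i]_{\mathbb{B}_0}$ is a common left eigenvector of the matrices $M_{x_j}(G)$ (so that $\Psi(D)\subseteq\mathscr{C}$), and the generic fiber count give $\dim\overline{\Psi(D)}=rn$, hence $\dim\mathscr{C}\geq rn$. The genuine gap is the upper bound, and neither of your two routes closes it. The containment $\mathscr{C}\subseteq\overline{\Psi(D)}$ asserts that every consistent $G$ --- equivalently, by Proposition~\ref{phi[W]<=>Mcmu}, every zero-dimensional ideal of colength $r$ admitting $\mathbb{B}_0$ as a basis of its quotient --- is a limit of the radical ones coming from $r$ distinct points in $D$. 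That is a smoothability/irreducibility statement about a border-basis (Gr\"obner) stratum of the Hilbert scheme of $r$ points in $\cpx^n$; such schemes are reducible in general when $n\geq 3$, you give no argument for the containment, and you yourself flag that $\Psi$ has no continuous extension to the boundary of $D$. Your proposed repair is weaker still: a Zariski tangent-space computation at a nondefective point $G=\Psi(v_1,\ldots,v_r)$ bounds only the dimension of the irreducible components of $\mathscr{C}$ passing through that point. Since $\dim\mathscr{C}$ is the maximum over all components, a hypothetical component whose general member is defective --- exactly the locus your construction never reaches --- is invisible both to the tangent-space bound and to the lower-bound parametrization, so nothing ``forces'' $\dim\mathscr{C}=rn$. (You also never carry out the linearization of the commutator equations; asserting that its solution space has dimension $rn$ is not a proof.)

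For contrast, the paper's upper bound comes from an algebraic-dependence (transcendence-degree) argument rather than smoothing or tangent spaces. Since $1\in\mathbb{B}_0$, each variable has a pure power $x_i^{p_i}\in\mathbb{B}_1$; the $rn$ entries $G_c=(G(:,p_ie_i))_{i=1}^n$ determine the square subsystem \reff{eqn:G:piei}, whose solution set contains the $r$ zeros of $\varphi[G]$, so those zeros are algebraic functions of $G_c$; the remaining columns of $G$ are then recovered from the zeros through the interpolation system with matrix $V(v_1,\ldots,v_r)$, which is shown to be nonsingular as an algebraic matrix function by evaluating at one interpolated $\hat G$. Hence every coordinate function on $\mathscr{C}$ is algebraic over the $rn$ free entries $G_c$, and $\dim\mathscr{C}$ equals the transcendence degree $rn$. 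If you wish to keep your parametrization approach, you must either prove $\mathscr{C}\subseteq\overline{\Psi(D)}$ for this particular order ideal $\mathbb{B}_0$, or replace the tangent-space step by a bound of the paper's type: exhibit $rn$ coordinates on $\mathscr{C}$ over which all remaining coordinates are algebraically dependent, so that the bound applies to every component and not only at points where $\varphi[G]$ has $r$ distinct zeros.
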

\begin{proof}
Since $1 \in \mathbb{B}_0$,
$
\{x_1, \ldots, x_n\} \subseteq \mathbb{B}_0 \cup \mathbb{B}_1.
$
By \reff{mscrB12}, for each $i$, there exists an integer $p_i >0$
such that $x_i^{p_i} \in \mathbb{B}_1$. Let
$
\mathbb{B}_c =   \{x_1^{p_1}, \ldots, x_n^{p_n} \}.
$
Consider the polynomial system
\be \label{eqn:G:piei}
\varphi[G, p_i e_i](x) =
\sum_{\bt \in \mathbb{B}_0 } G(\bt, p_i e_i) x^{\bt} - x_i^{p_i} =
0 \, \quad (i=1,\ldots,n).
\ee
Each equation has a leading term like $x_i^{p_i}$.
For arbitrary values of $G(:,\af)$ ($\af \in \mathbb{B}_c$),
the polynomial system \reff{eqn:G:piei} is always solvable and zero-dimensional.
The $r$ common zeros $v_1, \ldots, v_r$ of $\varphi[G]$
are algebraic functions in $G$.
Since they are also solutions to \reff{eqn:G:piei},
$v_1, \ldots, v_r$ can also be thought of as algebraic functions
in the free variables
\[
G_c = ( G(:, \af) )_{ \af \in  \mathbb{B}_c } .
\]

For vectors $u_1, \ldots, u_r \in \cpx^n$, denote the $r\times r$ square matrix
\[
V(u_1, \ldots, u_r) =
 \bpm  [u_1]_{\mathbb{B}_0} & \cdots & [u_r]_{\mathbb{B}_0}  \epm^{T}.
%
\]
Then, $V(v_1, \ldots, v_r)$ is clearly an algebraic matrix function in $G_c$.
We show that it is nonsingular, as a matrix function.
It is enough to show that $\det V(v_1, \ldots, v_r) \ne 0$
for some special values of $G$,
which can be chosen as follows.
First, select $\hat{u}_1, \ldots, \hat{u}_r \in D$,
where $D$ is as in \reff{set:D},
that is, $\det V(\hat{u}_1, \ldots, \hat{u}_r ) \ne 0$.
Then there exists a matrix $\hat{G}$ such that
$\hat{u}_1, \ldots, \hat{u}_r$ are common zeros of $\varphi[\hat{G}]$.
Actually, such $\hat{G}$ can be determined
by the equations $\varphi[\hat{G}](\hat{u}_j)=0$ and is given as
\[
\hat{G} = \bpm  [\hat{u}_1]_{\mathbb{B}_0} & \cdots & [\hat{u}_r]_{\mathbb{B}_0}  \epm^{-T}
\bpm  [\hat{u}_1]_{\mathbb{B}_1} & \cdots & [\hat{u}_r]_{\mathbb{B}_1}  \epm^{T}.
\]
The evaluation of each $v_j$ at $\hat{G}$ is $\hat{u}_j$.
So, $\det V\big(v_1, \ldots, v_r \big) \ne 0$ at $\hat{G}$,
and hence $V(v_1, \ldots, v_r)$ is a nonsingular matrix function.

For each $\af \in \mathbb{B}_1 \backslash \mathbb{B}_c$,
the common zeros $v_1, \ldots, v_r$ satisfy the equation
\[
V(v_1 , \ldots, v_r) G(:, \af) =  \bbm  (v_1)^\af & \cdots & (v_r)^\af \ebm^T.
\]
Since $V(v_1 , \ldots, v_r)$ is nonsingular,
each $G(:,\af)$ ($\af \in \mathbb{B}_1 \backslash \mathbb{B}_c$)
is an algebraic function in the free variables of $G_c$.
This means that the entries of $G_c$ form a
maximum algebraically independent set of $\cpx(\mathscr{C})$,
the field of rational functions on $\mathscr{C}$.
The transcendence degree of $\cpx(\mathscr{C})$ is $rn$,
so $\dim \mathscr{C} = rn$ (cf.~\cite[\S6, Chap.I]{Sha:BAG1}).
\end{proof}

The dimension of the intersection $\mathscr{C} \cap \mathscr{G}(\mF)$
is given as follows.

\begin{prop}  \label{thm:dim:C:g(F)}
Let $d = r(n+1) -1 - \dim \sig_r$.
If $\mF$ is a general tensor in $\sig_r$,
then $\dim \big( \mathscr{G}(\mF) \cap \mathscr{C} \big) = d$.
%
If, in addition, $d>0$ and
$\mathscr{H} \subseteq \cpx^{\mathbb{B}_0 \times \mathbb{B}_1}$
is a general subspace of codimension $d$,
then $\mathscr{G}(\mF) \cap \mathscr{C} \cap \mathscr{H}$
is a finite set.
\end{prop}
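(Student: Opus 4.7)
The plan is to transport the dimension of the affine fiber $\mbox{afiber}_r(\mF)$, which by Theorem~\ref{thm:rcW:exist} equals $d$ for general $\mF \in \sig_r$, to $\mathscr{G}(\mF) \cap \mathscr{C}$ via the correspondence set up in Theorem~\ref{thm:dstcV=>lmd}. Explicitly, for each decomposing class $\mbox{perm}(v_1,\ldots,v_r) \in \mbox{afiber}_r(\mF)$ whose ordered representative lies in $D$, Theorem~\ref{thm:dstcV=>lmd}(ii) produces a unique $G = \Psi(v_1,\ldots,v_r) \in \mathscr{G}(\mF)$. Since $\varphi[G]$ then has the $r$ distinct common zeros $v_1,\ldots,v_r$, Proposition~\ref{phi[W]<=>Mcmu} forces the companion matrices $M_{x_i}(G)$ to commute, so $G \in \mathscr{C}$. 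The resulting map $\Psi$ is permutation-invariant and injective on classes, because the unordered set $\{v_1,\ldots,v_r\}$ is recovered as the zero set of $\varphi[G]$.

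For the lower bound $\dim(\mathscr{G}(\mF) \cap \mathscr{C}) \ge d$, I would push through $\Psi$ a $d$-dimensional piece of $\mbox{afiber}_r(\mF)$. By Theorem~\ref{thm:rcW:exist}, for general $\mF$ either every element of $\mbox{afiber}_r(\mF)$ has an ordered lift in $D$ (case $d=0$), or the subset with such lifts is a non-empty Zariski open of $\mbox{afiber}_r(\mF)$ (case $d>0$, using the generic codimension-$d$ slice in Theorem~\ref{thm:rcW:exist}(ii) to exhibit non-emptiness); in both cases this piece has dimension $d$, and its image under the injective $\Psi$ is a $d$-dimensional constructible subset of $\mathscr{G}(\mF) \cap \mathscr{C}$.

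For the reverse inequality, I would show that the set $\mathscr{U} \subseteq \mathscr{G}(\mF) \cap \mathscr{C}$ on which $\varphi[G]$ has $r$ distinct zeros is a non-empty Zariski open subset whose closure equals $\mathscr{G}(\mF) \cap \mathscr{C}$. Non-emptiness is supplied by the image of $\Psi$, and openness follows because the discriminant of $\varphi[G]$ is polynomial in the entries of $G$. On $\mathscr{U}$, Theorem~\ref{thm:dstcV=>lmd}(i) gives $\mbox{perm}(v_1,\ldots,v_r) \in \mbox{afiber}_r(\mF)$, and Lemma~\ref{pro:bs=cmt} ensures $\mathbb{B}_0$ is a basis of $\cpx[x]/\langle \varphi[G] \rangle$, which forces $[v_1]_{\mathbb{B}_0},\ldots,[v_r]_{\mathbb{B}_0}$ to be linearly independent, i.e., $(v_1,\ldots,v_r) \in D$. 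Hence $G \in \mathrm{image}(\Psi)$, giving $\dim \mathscr{U} \le d$ and, by density, $\dim(\mathscr{G}(\mF) \cap \mathscr{C}) = d$. For the finite-intersection claim, a generic affine linear subspace $\mathscr{H}$ of codimension $d$ meets the $d$-dimensional algebraic set $\mathscr{G}(\mF) \cap \mathscr{C}$ in a $0$-dimensional set, i.e., a finite set, by the standard dimension-of-intersection estimate.

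The main obstacle I foresee is guaranteeing that $\mathscr{U}$ intersects every irreducible component of $\mathscr{G}(\mF) \cap \mathscr{C}$, since a priori a spurious component could lie entirely inside the discriminant locus and have dimension larger than $d$. The cleanest resolution is to view the total space $\bigcup_{\mF \in \sig_r} \{\mF\} \times (\mathscr{G}(\mF) \cap \mathscr{C})$ as an incidence variety over $\sig_r$, and to use semi-continuity of fiber dimension together with irreducibility of $\sig_r$ to pin every component of the generic fiber down to dimension exactly $d$, matching the dimension already realized by $\Psi$.
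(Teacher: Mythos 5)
Your lower bound and the final finite-slice step are sound and essentially the paper's own: the paper also pushes the general fiber of $\psi: D\times\P^{r-1}\to\sig_r$ (of dimension $d$ by the fiber-dimension theorem) through Theorem~\ref{thm:dstcV=>lmd}(ii) to get $\dim\big(\mathscr{G}(\mF)\cap\mathscr{C}\big)\ge d$, and handles the codimension-$d$ slice exactly as you do. The gap is in the upper bound, and you have located it yourself: nothing in your argument excludes an irreducible component of $\mathscr{G}(\mF)\cap\mathscr{C}$ that lies entirely inside the defective (discriminant) locus and has dimension larger than $d$. On such a component Theorem~\ref{thm:dstcV=>lmd}(i) produces no point of $\mbox{afiber}_r(\mF)$, so the correspondence with the affine fiber is silent about it, and openness of $\mathscr{U}$ gives no density.

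Your proposed repair does not close this gap. Upper semicontinuity of fiber dimension and irreducibility of $\sig_r$ only translate an upper bound on the dimension of each dominating component of the incidence variety $X=\{(\mF,G): G\in\mathscr{G}(\mF)\cap\mathscr{C}\}$ into an upper bound on its general fibers; they cannot ``pin'' the generic fiber to the value $d$ realized by $\Psi$, which is merely a lower bound and is blind to other components. A defective component of $X$ of dimension exceeding $r(n+1)-1$ that dominates $\sig_r$ would contradict nothing you have established. The missing input is precisely what the paper supplies: Lemma~\ref{lm:dimC=rn}, which shows $\dim\mathscr{C}=rn$ (itself a nontrivial count via transcendence degree), together with the observation, via Proposition~\ref{pro:generate:F}, that for a fixed consistent $G$ the tensors admitting $G$ as a generating matrix are linearly determined by their $\mathbb{B}_0$-entries, so the fiber of $X$ over $G$ is projectively of dimension at most $r-1$. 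The paper encodes both facts in the regular map $f:\mathscr{C}\times\P\cpx^{\mathbb{B}_0}\to\sig_r$, decomposes $\mathscr{C}$ into irreducible components (it need not be irreducible), removes the non-dominating ones on a Zariski open subset of $\sig_r$, and applies the fiber-dimension theorem to each dominating one to get $\dim f^{-1}(\mF)\le r-1+rn-\dim\sig_r=d$ for general $\mF$; since $G\mapsto\big(G,\mF|_{\mathbb{B}_0}\big)$ embeds $\mathscr{G}(\mF)\cap\mathscr{C}$ into $f^{-1}(\mF)$, this bounds the whole set, defective part included. Without such a dimension count on $\mathscr{C}$ (or on $X$), your upper bound, and hence the equality $\dim\big(\mathscr{G}(\mF)\cap\mathscr{C}\big)=d$, remains unproved.
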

\begin{proof}
Let $D$ be the set as in \reff{set:D}. Consider the regular mapping
\[
\psi: D \times \P^{r-1} \to \sig_r, \quad
\big((v_1, \ldots, v_r), \lmd \big) \mapsto  \mF =
\Sig_{i=1}^r \, \lmd_i (1,v_i)^{\otimes m}.
\]
Clearly, the closure $\overline{\psi(D \times \P^{r-1})}$ is $\sig_r$.
By Theorem~7 of \cite[\S6, Chap. I]{Sha:BAG1}, there exists a
Zariski open subset $\mathscr{Y}$ of $\sig_r$
such that for each $\mF \in \mathscr{Y}$
\[
\dim \, \psi^{-1}(\mF)=r(n+1)-1-\dim \sig_r =d.
\]
Each $\big((v_1,\ldots, v_r), \lmd \big) \in \psi^{-1}(\mF)$
uniquely determines a matrix $G \in \mathscr{G}(\mF) \cap \mathscr{C}$,
by Theorem~\ref{thm:dstcV=>lmd}(ii). Indeed, $G$
can be determined by such $(v_1,\ldots,v_r)$ as
\[
G = \bpm  [v_1]_{\mathbb{B}_0} & \cdots & [v_r]_{\mathbb{B}_0}  \epm^{-T}
\bpm  [v_1]_{\mathbb{B}_1} & \cdots & [v_r]_{\mathbb{B}_1}  \epm^{T}.
\]
This implies that for all $\mF \in \mathscr{Y}$,
\be \label{dimGFCY>=gap}
\dim \big( \mathscr{G}(\mF) \cap \mathscr{C} \big)  \geq d.
\ee

Next, consider the mapping
($\P\cpx^{\mathbb{B}_0}$ denotes the projectivization of $\cpx^{\mathbb{B}_0}$)
\[
f: \mathscr{C} \times \P\cpx^{\mathbb{B}_0} \to \sig_r, \quad
(G, F) \mapsto  \mc{F} := f(G, F),
\]
which is defined such that the image $\mF = f(G, F)$ is
the tensor determined by $F\in \P\cpx^{\mathbb{B}_0}$
and the generating polynomials $\varphi[G,\af]$, that is,
\[
\mF_{\gm} = (F)_{\gm} \quad(\forall \,\gm \in \mathbb{B}_0), \quad
\mF_{\af} = \sum_{ \bt \in \mathbb{B}_0}
G_{\bt,\af} \mF_\bt \quad ( \forall \, \af \in \mathbb{B}_1),
\]
\[
\mc{F}_{\af+\gm} = \sum_{ \bt \in \mathbb{B}_0}
G_{\bt, \af} \mc{F}_{\bt+\gm} \, \quad
(\forall \, \af \in \mathbb{B}_1, \, \forall \, \gm \in \N^n_{m-|\af|}).
\]
This mapping $f$ is regular on $\mathscr{C} \times \P\cpx^{\mathbb{B}_0}$.
The variety $\sig_r$ is irreducible, while
$\mathscr{C}$ is not necessarily. We decompose $\mathscr{C}$ as
\[
\mathscr{C} = \mathscr{C}_1 \cup \cdots \cup \mathscr{C}_\ell,
\]
with $\mathscr{C}_1, \ldots, \mathscr{C}_\ell$
all irreducible and all distinct. Then, by Lemma~\ref{lm:dimC=rn},
\[
\max\left \{\dim \mathscr{C}_1, \ldots,
 \dim  \mathscr{C}_\ell \right\} = \dim \mathscr{C}   = rn,
\]
\[
\sig_r =  \overline{ f(\mathscr{C} \times \P\cpx^{\mathbb{B}_0} ) } =
 \overline{ f(\mathscr{C}_1 \times \P\cpx^{\mathbb{B}_0} ) }  \cup \cdots \cup
 \overline{ f(\mathscr{C}_\ell \times \P\cpx^{\mathbb{B}_0} ) }.
\]
Since $\sig_r$ is irreducible, some of
$\overline{ f(\mathscr{C}_1 \times \P\cpx^{\mathbb{B}_0} ) } , \ldots,
\overline{ f(\mathscr{C}_\ell \times \P\cpx^{\mathbb{B}_0} ) } $ are equal to $\sig_r$,
and the others are properly contained in $\sig_r$.
So, we can assume
\[
\sig_r =  \overline{ f(\mathscr{C}_1 \times \P\cpx^{\mathbb{B}_0} ) }  = \cdots =
\overline{ f(\mathscr{C}_s \times \P\cpx^{\mathbb{B}_0} ) }
\supsetneqq \overline{ f(\mathscr{C}_j \times \P\cpx^{\mathbb{B}_0} ) },
\quad j = s+1,\ldots, \ell.
\]
By the irreducibility of $\sig_r$, we know
\[
\dim \sig_r > \dim \overline{ f(\mathscr{C}_j \times \P\cpx^{\mathbb{B}_0} )},
\quad j = s+1,\ldots, \ell.
\]
So, the set
\[
\mathscr{Z}_0 := \sig_r \backslash \Big(
 \overline{ f(\mathscr{C}_{s+1} \times \P\cpx^{\mathbb{B}_0} ) }  \cup \cdots \cup
 \overline{ f(\mathscr{C}_\ell \times \P\cpx^{\mathbb{B}_0} ) }
\Big)
\]
is an open subset of $\sig_r$, in the Zariski topology.

For each $i=1,\ldots,s$, let $f_i = f|_{\mathscr{C}_i}$,
the restriction of $f$ on $\mathscr{C}_i$.
Then $f_i$ is a regular mapping from the irreducible variety
$\mathscr{C}_i \times \P\cpx^{\mathbb{B}_0}$ to the irreducible variety $\sig_r$.
By Theorem~7 of \cite[\S6, Chap. I]{Sha:BAG1},
$\sig_r$ has an open subset $\mathscr{Z}_i$ such that
\[
\dim f_i^{-1}(\mF) = r -1 + \dim \mathscr{C}_i - \dim \sig_r,
\]
for all $\mF \in \mathscr{Z}_i$. Let
$
\mathscr{Z} =  \mathscr{Z}_0 \cap  \mathscr{Z}_1 \cap \cdots \cap \mathscr{Z}_s.
$
For each $\mF \in \mathscr{Z}$, we have
\[
f^{-1}(\mF) = f_1^{-1}(\mF) \cup \cdots \cup f_s^{-1}(\mF),
\]
\[
\dim f^{-1}(\mF) =\max_{1 \leq i \leq s} \dim  f_i^{-1}(\mF),
\]
\be \label{dim:inv(F)<=gap}
\dim f^{-1}(\mF)  =
r -1 + \max_{1\leq i \leq s} \dim \mathscr{C}_i - \dim \sig_r \leq d.
\ee
Let $\mathscr{U} :=  \mathscr{Y} \cap \mathscr{Z}$,
which is again a Zariski open subset of $\sig_r$.
Then, for each $\mF \in \mathscr{U}$,
$\dim \big( \mathscr{G}(\mF) \cap \mathscr{C} \big) = d$ follows
from \reff{dimGFCY>=gap} and \reff{dim:inv(F)<=gap}.

When $d>0$, the intersection
$\mathscr{G}(\mF)\cap \mathscr{C} \cap \mathscr{H}$ is finite,
if $\mathscr{H} \subseteq \cpx^{\mathbb{B}_0 \times \mathbb{B}_1}$
is a general subspace of codimension $d$.
This follows from the definition of dimension
(cf.~\cite[Lecture~11]{Har}).
\end{proof}

By Proposition~\ref{thm:dim:C:g(F)},
when $d =0$, the intersection $\mathscr{G}(\mF)\cap \mathscr{C}$
is a finite set; when $d > 0$,
$\mathscr{G}(\mF)\cap \mathscr{C}$ has dimension $d$.
This property will be used in \S\ref{sc:symTD}.

\subsection{Distinct zeros of generating polynomials}
\label{sbsc:G:nodfc}

This subsection discusses when $\varphi[G]$ has $r$ distinct zeros
for a generating matrix $G$. By Proposition~\ref{phi[W]<=>Mcmu},
this is the case if and only if the companion matrices
$M_{x_1}(G), \ldots, M_{x_n}(G)$ are simultaneously diagonalizable.
Recall the set $\sig_r$ as in \S\ref{sbsc:symtsr}.

\begin{defi}  \label{defective:G}
Let $G \in \cpx^{ \mathbb{B}_0\times \mathbb{B}_1 }$
be a consistent generating matrix for $\mF$.
We call $\varphi[G]$ {\it nondefective} if it has $r$ distinct zeros,
and call $\varphi[G]$ {\it defective} if otherwise.
\end{defi}

By Theorem~\ref{thm:dstcV=>lmd}, if a nondefective $\varphi[G]$ is found,
then we can construct a tensor decomposition.
One concerns how often this is the case.
%
%
Note that $\varphi[G]$ has a repeated zero
only if all $M_{x_1}(G), \ldots, M_{x_n}(G)$
have a repeated eigenvalue, with a common eigenvector.
This occurs only if their discriminants are all zeros, i.e.,
\be \label{dis(MxiG)=0}
\mbox{dis}\Big(M_{x_1}(G)\Big) = \cdots =
\mbox{dis}\Big(M_{x_n}(G)\Big) = 0.
\ee
(In the above, $\mbox{dis}(X)$ denotes
the discriminant of the characteristic polynomial of $X$.
It equals zero if and only if $X$ has a repeated eigenvalue.
Cf.~\cite[\S7.5]{Stu02}.) For $G \in \mathscr{C}$,
if one of \reff{dis(MxiG)=0} is violated,
then $\varphi[G]$ is nondefective.
%
%
Denote the variety
\be
\mathscr{E}   = \left\{ G \in \cpx^{ \mathbb{B}_0 \times \mathbb{B}_1 }: \,
G \, \mbox{ satisfies } \, \reff{dis(MxiG)=0}
\right\}.
\ee

\begin{pro} \label{thm:g(F):C:nodfc}
Let $\mF \in \sig_r$ and $d = r(n+1) -1 - \dim \sig_r$.

\bit

\item [(i)] When $d=0$, if $\mathscr{G}(\mF) \cap \mathscr{C}$
does not intersect $\mathscr{E}$, then, for
every $G \in \mathscr{G}(\mF) \cap \mathscr{C}$,
$\varphi[G]$ is nondefective.

\item [(ii)] When $d>0$, if
$\mathscr{H} \subseteq \cpx^{\mathbb{B}_0 \times \mathbb{B}_1}$
is a subspace of codimension $d$ such that
$\mathscr{G}(\mF) \cap \mathscr{C} \cap \mathscr{H}$
does not intersect $\mathscr{E}$, then, for
every $G \in \mathscr{G}(\mF) \cap \mathscr{C} \cap \mathscr{H}$,
$\varphi[G]$ is nondefective.

\eit

\end{pro}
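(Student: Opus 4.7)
The plan is to reduce both parts to a single linear-algebraic observation using Proposition~\ref{phi[W]<=>Mcmu}: for a consistent generating matrix $G$, the system $\varphi[G](x)=0$ has $r$ distinct zeros if and only if the commuting tuple $M_{x_1}(G),\ldots,M_{x_n}(G)$ is simultaneously diagonalizable. So nondefectiveness is purely a statement about the $M_{x_i}(G)$, and the role of the hypothesis ``does not intersect $\mathscr{E}$'' is to guarantee that at least one companion matrix has only simple eigenvalues.

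I would first unpack the hypothesis. By definition of $\mathscr{E}$, saying that $G\notin\mathscr{E}$ means there exists some index $i$ with $\mbox{dis}(M_{x_i}(G))\neq 0$, i.e.\ that particular $M_{x_i}(G)$ has $r$ distinct eigenvalues. In case (i), the hypothesis $(\mathscr{G}(\mF)\cap\mathscr{C})\cap\mathscr{E}=\emptyset$ gives exactly this for every $G\in\mathscr{G}(\mF)\cap\mathscr{C}$; in case (ii), the hypothesis gives the same conclusion for every $G$ in the more restricted set $\mathscr{G}(\mF)\cap\mathscr{C}\cap\mathscr{H}$. Hence in both parts it suffices to prove the following linear-algebra statement: if $G\in\mathscr{C}$ and some $M_{x_i}(G)$ has $r$ distinct eigenvalues, then $M_{x_1}(G),\ldots,M_{x_n}(G)$ are simultaneously diagonalizable.

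This step is the technical core, but it is standard. Since $G\in\mathscr{C}$, all $M_{x_j}(G)$ commute pairwise by \reff{MiMj=MjMi}. Fix the index $i$ for which $M_{x_i}(G)$ has $r$ simple eigenvalues. Then $M_{x_i}(G)$ is diagonalizable with $r$ one-dimensional eigenspaces, and because every $M_{x_j}(G)$ commutes with $M_{x_i}(G)$, each such $M_{x_j}(G)$ preserves every eigenspace of $M_{x_i}(G)$. A linear map preserving a one-dimensional subspace acts as a scalar on it, so every $M_{x_j}(G)$ is diagonal in the eigenbasis of $M_{x_i}(G)$. Thus the tuple is simultaneously diagonalizable.

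Finally, I would apply Proposition~\ref{phi[W]<=>Mcmu} to conclude that $\varphi[G](x)=0$ has $r$ distinct complex solutions, which is exactly Definition~\ref{defective:G} of nondefectiveness. Since this argument only uses membership in $\mathscr{C}$ and the failure of all discriminants to vanish, it handles (i) and (ii) uniformly, with the subspace $\mathscr{H}$ in (ii) playing no role beyond restricting which $G$'s we consider. There is no real obstacle here; the only subtlety worth flagging is that one should not confuse ``$G\notin\mathscr{E}$'' (some discriminant nonzero) with the stronger ``all discriminants nonzero,'' and verify that the weaker condition is indeed enough, which it is by the commuting-matrices argument above.
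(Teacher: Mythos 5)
Your proof is correct and follows essentially the paper's route: the proposition amounts to the observation that for a consistent $G$ membership in $\mathscr{E}$ is necessary for defectiveness, so the hypothesis that $\mathscr{G}(\mF)\cap\mathscr{C}$ (resp.\ $\mathscr{G}(\mF)\cap\mathscr{C}\cap\mathscr{H}$) avoids $\mathscr{E}$ immediately forces nondefectiveness. The only difference is that you spell out the key step explicitly (one nonvanishing discriminant plus pairwise commutation gives simultaneous diagonalizability, hence $r$ distinct zeros via Proposition~\ref{phi[W]<=>Mcmu}), whereas the paper simply invokes the remark made in \S\ref{sbsc:G:nodfc} just before the proposition.
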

\begin{proof}
For $G  \in \mathscr{G}(\mF) \cap \mathscr{C}$,
the tuple $\varphi[G]$ is defective only if $G \in \mathscr{E}$.
Thus, if $\mathscr{G}(\mF) \cap \mathscr{C}$ does not intersect $\mathscr{E}$
(resp., $\mathscr{G}(\mF) \cap \mathscr{C} \cap \mathscr{H}$
does not intersect $\mathscr{E}$),
then every $G \in \mathscr{G}(\mF) \cap \mathscr{C}$
(resp., every $G \in \mathscr{G}(\mF) \cap \mathscr{C} \cap \mathscr{H}$)
does not belong to $\mathscr{E}$, that is,
$\varphi[G]$ is nondefective,
by Definition~\ref{defective:G}.
\end{proof}

By Proposition~\ref{thm:dim:C:g(F)}, for a general $\mF \in \sig_r$,
we know that: when $d=0$, the intersection
$\mathscr{G}(\mF) \cap \mathscr{C}$ is a finite set;
when $d>0$, the intersection
$\mathscr{G}(\mF) \cap \mathscr{C} \cap \mathscr{H}$
is a finite set, if $\mathscr{H}$
is a general subspace of codimension $d$.
Note that $\mathscr{E}$ is a variety of positive codimension.
Therefore, the assumptions in Proposition~\ref{thm:g(F):C:nodfc}
are usually satisfiable.
%
%

\section{Symmetric tensor decompositions}
\label{sc:symTD}
\setcounter{equation}{0}

Given a tensor $\mc{F} \in \mt{S}^m(\cpx^{n+1})$,
we want to find a decomposition
\be  \label{s4:dcpA:u^m}
\mc{F} = (u_1)^{\otimes m} + \cdots +  (u_r)^{\otimes m},
\ee
for some vectors $u_1, \ldots, u_r \in \cpx^{n+1}$.
The smallest such $r$ is the symmetric rank of $\mF$.
If $\mc{F}$ is a general tensor, then its rank is given
by the formula \reff{AlxHrchFml}.
Otherwise, \reff{AlxHrchFml} only gives an upper bound for the symmetric border rank.
Throughout this section, we assume the integer $r \geq \rank_S(\mc{F})$ is given.
Generally, a default value for $r$ can be chosen as in \reff{AlxHrchFml}.

When all $(u_i)_0 \ne 0$, \reff{s4:dcpA:u^m} is equivalent to
\be \label{s4:dcpA:lmd*v}
\mc{F} = \lmd_1 (1,v_1)^{\otimes m} + \cdots + \lmd_r (1,v_r)^{\otimes m}.
\ee
By Proposition~\ref{pro:U0!=0},
$(u_i)_0 \ne 0$ is generally satisfied.
Recall that $G$ is a generating matrix for $\mF$ if and only if
\reff{<vphi:af:W,A>=0} is satisfied. The polynomial tuple
$\varphi[G]$ is defined as in \reff{nt:vphi[G]}.
To get \reff{s4:dcpA:lmd*v}, by Theorem~\ref{thm:dstcV=>lmd},
it is enough to find $G \in \mathscr{G}(\mF)$ (see \reff{df:g(F)}) such that
$\varphi[G]$ has $r$ distinct zeros. Then,
$v_1,\ldots,v_r$ can be chosen to be the distinct zeros of $\varphi[G]$,
and the coefficients $\lmd_1,\ldots, \lmd_r$
can be determined from \reff{s4:dcpA:lmd*v}.
We propose to compute \reff{s4:dcpA:lmd*v} in two major steps:

\bit

\item  Find a generating matrix $G \in \mathscr{G}(\mF)$
such that the polynomial tuple $\varphi[G]$ has
$r$ distinct zeros (i.e., $\varphi[G]$ is nondefective).
%
%

\item Compute the zeros $v_1,\ldots, v_r$ of $\varphi[G]$,
and then determine the coefficients $\lmd_1,\ldots, \lmd_r$
satisfying the equation \reff{s4:dcpA:lmd*v}.

\eit

Let $\mathbb{B}_0,\mathbb{B}_1$ be the set of monomials, as in
\reff{monls:grlex}-\reff{mscrB12}.
For convenience, by writing $\af \in \mathbb{B}_1$
(resp., $\bt \in \mathbb{B}_0$)
we mean that $x^\af \in \mathbb{B}_1$
(resp., $x^\bt \in \mathbb{B}_0$).
For each $\af \in \mathbb{B}_1$,
define $A[\mc{F},\af] \in \cpx^{ \N_{m-|\af|}^n \times \mathbb{B}_0 }$
and $b[\mc{F},\af] \in \cpx^{ \N_{m-|\af|}^n }$ as
\be \label{df:Ab[F,af]}
\left\{ \baray{rcl}
A[\mc{F},\af]_{\gm , \bt} &=& \mc{F}_{\bt+\gm}, \quad
\forall \,  (\gm, \bt) \in \N_{m-|\af|}^n \times \mathbb{B}_0, \\
b[\mc{F},\af]_{\gm} &=& \mc{F}_{\af+\gm}, \quad
\forall \,  \gm \in \N_{m-|\af|}^n.
\earay\right.
\ee
Clearly, all $A[\mc{F},\af]$ and $b[\mc{F},\af]$ are linear in $\mc{F}$.
One can verify that $G \in \mathscr{G}(\mF)$ if and only if
each $G(:,\af)$ ($\af \in \mathbb{B}_1$) satisfies the equation
\be \label{af:AW=b}
A[\mc{F},\af] \, G(:,\af) = b[\mc{F},\af].
\ee
If, for some $\af \in \mathbb{B}_1$, the linear system \reff{af:AW=b}
is inconsistent, then it is most likely that $\rank_S(\mF) > r$.
(For such case, we need to increase the value of $r$.)
Otherwise, the vector $G(:,\af)$ satisfying \reff{af:AW=b}
can be linearly parameterized as
\be \label{W=c+N*omg:af}
G(:,\af)= c_\af + N_\af \omega_\af,
\ee
for a vector $c_\af$, a matrix $N_\af$
and a parameter $\omega_\af$. If it is not empty,
the set $\mathscr{G}(\mF)$ can be linearly parameterized as
\be \label{fun:W(omg)}
G(\omega) := C + N(\omega),
\ee
for some $C \in \cpx^{\mathbb{B}_0 \times \mathbb{B}_1 }$
and $N(\omega)$ linear in the parameter
\[
\omega:=(\omega_\af: \af \in \mathbb{B}_1 ).
\]

Recall that $G=G(\omega)$ is a consistent generating matrix for $\mF$
if and only if $\varphi[G]$ has $r$ zeros, counting multiplicities.
By Proposition~\ref{phi[W]<=>Mcmu}, this is equivalent to
\be \label{cmu:Mij:C+N*omg}
\Big[ M_{x_i}\big(G(\omega)\big), M_{x_j}\big(G(\omega)\big) \Big] = 0 \,\,
(1 \leq i <j \leq n).
\ee
Each $M_{x_i}\big(G(\omega)\big)$ is linear in $\omega$,
so \reff{cmu:Mij:C+N*omg} is a set of quadratic equations.

By Theorem~\ref{thm:dstcV=>lmd},
the decomposition \reff{s4:dcpA:lmd*v} can be found
by computing $\omega$ such that $\varphi[G(\omega)]$
has $r$ distinct zeros.
By Proposition~\ref{phi[W]<=>Mcmu}, this is equivalent to that
$M_{x_1}\big(G(\omega)\big)$, $\ldots$, $M_{x_n}\big(G(\omega)\big)$
are simultaneously diagonalizable, i.e., there exist
a nonsingular matrix $V$ and diagonal matrices $D_1, \ldots, D_n$ such that
\be \label{inV*Mxi*V=Di}
V^{-1}M_{x_1}(G(\omega))V = D_1, \, \ldots,
V^{-1}M_{x_n}(G(\omega))V = D_n.
\ee
Compared with \reff{cmu:Mij:C+N*omg}, the nonlinear system
\reff{inV*Mxi*V=Di} has new matrix variables $V$, $D_1$, $\ldots$, $D_n$.
Solving \reff{inV*Mxi*V=Di} in $(\omega, V, D_1, \ldots, D_n)$
is much harder, because of the big number of new extra variables.

We propose to compute symmetric tensor decompositions
by solving \reff{cmu:Mij:C+N*omg}, instead of \reff{inV*Mxi*V=Di}.
There are two approaches for doing this.
The first one uses algebraic methods,
while the second one uses numerical methods.

\subsection{An algebraic algorithm}

Recall that $\sig_r$ is the set of symmetric tensors
whose symmetric border ranks $\leq r$, defined as in \S\ref{sbsc:symtsr}.
Its dimension is given by \reff{fml:dim:sig-r}.
Let $d= r(n+1)-1-\dim \sig_r$ be the dimension gap.
By Proposition~\ref{thm:dim:C:g(F)}, if $d=0$,
\reff{cmu:Mij:C+N*omg} has finitely many solutions for general $\mF \in \sig_r$.
When $d>0$, this is true if we add $d$ generic linear equations to \reff{cmu:Mij:C+N*omg}.
When a polynomial system has finitely many solutions,
we can find all of them by classical algebraic methods
(cf.~\cite{CLO07,Stu02}).
This leads to the following algorithm.

\begin{alg} \label{alg:GR:algebraic}
An algebraic method for symmetric tensor decompositions.
\bit

\item [{\bf Input:}]  A general tensor $\mF \in \sig_r$.

\item [{\bf Output:}] One or several tuples $(u_1,\ldots,u_r)$
satisfying \reff{s4:dcpA:u^m}.

%
%

\item [Step 0:] Parameterize $G$ as in \reff{fun:W(omg)}.
Let $d = r(n+1)-1-\dim \sig_r$.

\item [Step 1:]
If $d=0$, solve \reff{cmu:Mij:C+N*omg} by an algebraic method.
If $d>0$, choose general $a_1, \ldots, a_d \in \cpx^\ell$
($\ell$ is the length of $\omega$)
and $b_1,\ldots,b_d \in \cpx$, then apply an algebraic method to
solve \reff{cmu:Mij:C+N*omg} together with
\be \label{rand:aw=c}
a_1^T\omega-b_1 = \cdots = a_d^T\omega-b_d = 0.
\ee
Let the solutions be $w_1,\ldots,w_N$, and $k=1$.

\item [Step 2:] Apply the method in \S\ref{sbsc:polyf=0}
to compute the complex zeros of $\varphi[G(w_k)]$,
say, $v_1,\ldots, v_r$. Then, determine $\lmd_1,\ldots,\lmd_r$
from the equation \reff{s4:dcpA:lmd*v}.

\item [Step 3:] For $i=1,\ldots,r$,
let $u_i := \sqrt[m]{\lmd_i}(1,v_i)$,
and output the tuple $(u_1,\ldots,u_r)$.

\item [Step 4:] If $k<N$, let $k:=k+1$ and go to Step~2;
otherwise, stop.

\eit

\end{alg}

In Step~0, if $G$ cannot be parameterized in the form \reff{fun:W(omg)}
(i.e., the set $\mathscr{G}(\mF)$ is empty),
then it is most likely that $\rank_S(\mF) > r$.
For such case, we need to increase the value of $r$.

The main task of Algorithm~\ref{alg:GR:algebraic} is in Step~1,
for solving the polynomial system \reff{cmu:Mij:C+N*omg},
together with \reff{rand:aw=c} if $d>0$.
When algebraic methods are applied to solve it,
we can get all the complex solutions.
On the other hand, such methods usually need to compute Gr\"{o}bner bases,
so they are usually efficient for small tensors.
We refer to \cite{CLO07,Stu02} for solving polynomial systems.

\begin{prop} \label{thm:symTD:prop}
For a general tensor $\mF \in \sig_r$, we have the properties:
\bit
\item [(i)] If $d=0$,
Algorithm~\ref{alg:GR:algebraic} can find
all the decomposing classes of $\mF$, i.e., the set
$\widetilde{\mbox{fiber}}_r(\mF)$ defined in \S\ref{sbsc:fiber(F)}.

\item [(ii)] If $d>0$, Algorithm~\ref{alg:GR:algebraic} can find
a finite slice of $\widetilde{\mbox{fiber}}_r(\mF)$,
parameterized by $a_1,\ldots, a_d$ and $b_1,\ldots, b_d$ in \reff{rand:aw=c}.

\eit
\end{prop}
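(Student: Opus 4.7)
The plan is to combine the structural correspondence in Theorem~\ref{thm:dstcV=>lmd} with the dimension count in Proposition~\ref{thm:dim:C:g(F)}, the nondefectiveness statement in Proposition~\ref{thm:g(F):C:nodfc}, and the genericity guarantee of Theorem~\ref{thm:rcW:exist}, all applied to a single common Zariski open subset of $\sig_r$ on which each of these results is valid. The output of Algorithm~\ref{alg:GR:algebraic} is exactly the image, under the map of Theorem~\ref{thm:dstcV=>lmd}(i), of the finite solution set of \reff{cmu:Mij:C+N*omg} (together with \reff{rand:aw=c} when $d>0$), so the proof reduces to showing that this finite solution set is in bijective correspondence with $\widetilde{\mbox{fiber}}_r(\mF)$ in case~(i), and with the asserted finite slice of it in case~(ii).

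First I would verify that Step~1 terminates with a finite enumeration $w_1,\ldots,w_N$. In case~(i), Proposition~\ref{thm:dim:C:g(F)} says $\mathscr{G}(\mF)\cap\mathscr{C}$ is zero-dimensional for general $\mF\in\sig_r$, so a classical algebraic solver returns every complex solution of \reff{cmu:Mij:C+N*omg}. In case~(ii), the same proposition, applied to the subspace $\mathscr{H}$ cut out by \reff{rand:aw=c} for generic $a_i,b_i$, gives finiteness of $\mathscr{G}(\mF)\cap\mathscr{C}\cap\mathscr{H}$. Next, Proposition~\ref{thm:g(F):C:nodfc} (applied in the two regimes $d=0$ and $d>0$ with $\mathscr{H}$ generic) ensures that for every $w_k$ the tuple $\varphi[G(w_k)]$ is nondefective, since $\mathscr{E}$ has positive codimension and generically misses the finite solution set. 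Theorem~\ref{thm:dstcV=>lmd}(i) then guarantees that Steps~2--3 successfully extract $r$ distinct zeros $v_1,\ldots,v_r$, uniquely determine $\lambda_1,\ldots,\lambda_r$ via \reff{s4:dcpA:lmd*v}, and return a bona fide decomposing tuple $(u_1,\ldots,u_r)$.

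To establish completeness in case~(i), I would invoke Theorem~\ref{thm:rcW:exist}(i): for general $\mF\in\sig_r$, every decomposing tuple satisfies $(v_1,\ldots,v_r)\in D$, so Theorem~\ref{thm:dstcV=>lmd}(ii) provides a unique $G\in\mathscr{G}(\mF)\cap\mathscr{C}$ attached to each decomposing class. This gives an injection $\widetilde{\mbox{fiber}}_r(\mF)\hookrightarrow\mathscr{G}(\mF)\cap\mathscr{C}$, which by Theorem~\ref{thm:dstcV=>lmd}(i) is also a surjection (up to permutation/$m$-th root of unity equivalence, which defines the decomposing class). Since Algorithm~\ref{alg:GR:algebraic} iterates over every element of $\mathscr{G}(\mF)\cap\mathscr{C}$, every element of $\widetilde{\mbox{fiber}}_r(\mF)$ is output. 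For case~(ii), the same argument, with Theorem~\ref{thm:rcW:exist}(ii) in place of (i), establishes a bijection between $\mathscr{G}(\mF)\cap\mathscr{C}\cap\mathscr{H}$ and the finite slice of $\widetilde{\mbox{fiber}}_r(\mF)$ consisting of decomposing classes whose representatives $(v_1,\ldots,v_r)$ lie in the codimension-$d$ subspace $S\subseteq(\cpx^n)^r$ dual to the equations \reff{rand:aw=c}.

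The main obstacle will be arranging that all the genericity assumptions hold simultaneously. Specifically, I need one common Zariski open subset $\mathscr{U}\subseteq\sig_r$ on which Proposition~\ref{thm:dim:C:g(F)}, Proposition~\ref{thm:g(F):C:nodfc}, and Theorem~\ref{thm:rcW:exist} are all valid, and, in case~(ii), a Zariski open set of parameters $(a_1,b_1,\ldots,a_d,b_d)$ for which the sliced intersection is both finite and disjoint from $\mathscr{E}$. Each of these is an intersection of finitely many nonempty Zariski open sets, hence nonempty and dense by irreducibility of $\sig_r$, so this obstacle is resolvable but needs to be stated cleanly. Once this common genericity is isolated, the remaining verifications are direct applications of the cited results.
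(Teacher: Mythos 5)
Your overall route coincides with the paper's: finiteness of the solution set via Proposition~\ref{thm:dim:C:g(F)}, and the correspondence between decompositions and generating matrices via Theorems~\ref{thm:dstcV=>lmd} and~\ref{thm:rcW:exist}. The genuine gap is where you invoke Proposition~\ref{thm:g(F):C:nodfc} to conclude that \emph{every} solution $w_k$ yields a nondefective $\varphi[G(w_k)]$, ``since $\mathscr{E}$ has positive codimension and generically misses the finite solution set.'' Proposition~\ref{thm:g(F):C:nodfc} is conditional: it assumes $\mathscr{G}(\mF)\cap\mathscr{C}$ (resp.\ $\mathscr{G}(\mF)\cap\mathscr{C}\cap\mathscr{H}$) avoids $\mathscr{E}$, and neither the paper nor your argument verifies this hypothesis for general $\mF\in\sig_r$; the finite set depends on $\mF$, so positive codimension of the fixed variety $\mathscr{E}$ does not by itself force the intersection to be empty, and the paper deliberately refrains from claiming this (cf.\ Remark~\ref{rmk:cmu:okay}: nondefectiveness ``cannot be always guaranteed''). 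This claim is load-bearing in your write-up: both the assertion that Steps~2--3 succeed for every $w_k$ and the surjectivity half of your claimed bijection rest on it. Neither is needed for the statement, which only asserts that all decomposing classes (resp.\ a finite slice) are \emph{found}. The paper argues the sufficient direction: each decomposing class, via Theorem~\ref{thm:rcW:exist} and Theorem~\ref{thm:dstcV=>lmd}(ii), determines a solution $G$ of \reff{cmu:Mij:C+N*omg} (plus \reff{rand:aw=c} when relevant), and for that particular $G$ nondefectiveness is automatic, since the $r$ distinct points $v_1,\ldots,v_r$ are common zeros of $\varphi[G]$ while $\mathbb{B}_0$ spans $\cpx[x]/\langle\varphi[G]\rangle$, so the ideal is radical with exactly these zeros; processing that $w_k$ therefore returns the class. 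Your proposal never makes this observation and instead hangs completeness on the unproven genericity claim.

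A second, smaller inaccuracy is in case (ii): the constraints \reff{rand:aw=c} are linear in the parameter $\omega$, i.e.\ they cut a subspace $\mathscr{H}$ of $\cpx^{\mathbb{B}_0\times\mathbb{B}_1}$, not a codimension-$d$ linear subspace $S\subseteq(\cpx^n)^r$ of decomposing tuples; the correspondence $(v_1,\ldots,v_r)\mapsto G$ is rational, so identifying the output with ``classes whose representatives lie in $S$'' misdescribes what is computed. The paper instead shows that the set $K$ of consistent parameters $\omega$ for which $\varphi[G(\omega)]$ has no repeated zeros has dimension $d$ (by Theorem~\ref{thm:rcW:exist}), that the generic linear slice \reff{rand:aw=c} meets $K$, and that each point of the intersection produces a decomposing class by Theorem~\ref{thm:dstcV=>lmd}(i).
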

\begin{proof}
By Proposition~\ref{thm:dim:C:g(F)}, the polynomial system \reff{cmu:Mij:C+N*omg},
together with \reff{rand:aw=c} if $d>0$,
has finitely many solutions for general $\mF \in \sig_r$.
When we apply algebraic methods to solve it,
all the complex solutions can be found.

(i) If $d=0$, then $\mbox{fiber}_r(\mF) \subseteq U_0$ (see \reff{df:U0})
for general $\mF \in \sig_r$, by Proposition~\ref{pro:U0!=0}.
So, \reff{s4:dcpA:u^m} is equivalent to \reff{s4:dcpA:lmd*v}.
By Theorem~\ref{thm:rcW:exist}, we have
$(v_1,\ldots,v_r) \in D$ (defined as in \reff{set:D}),
for all $\mbox{perm}(v_1,\ldots,v_r) \in \mbox{afiber}_r(\mF)$.
Hence, by Theorem~\ref{thm:dstcV=>lmd},
every decomposing class of $\mF$ uniquely determines a generating matrix $G$,
which is a solution of \reff{cmu:Mij:C+N*omg}.
Therefore, when $d=0$, Algorithm~\ref{alg:GR:algebraic} is able to find
all the decomposing classes of $\mF$.

(ii) If $d>0$, for general $\mF\in \sig_r$, Theorem~\ref{thm:rcW:exist}
implies that the set
\[
K := \{ \omega: \,  \reff{cmu:Mij:C+N*omg} \mbox{ is satisfied  and }
\varphi[G(\omega)] \mbox{ has no repeated zeros} \}
\]
has dimension $d$. For generically chosen $a_i,b_i$, the solution set
of \reff{rand:aw=c} must intersect $K$.
By Theorem~\ref{thm:dstcV=>lmd}(i), for each $\omega \in K$,
$G(\omega)$ determines a decomposing class of $\mF$.
%
%
Hence, Algorithm~\ref{alg:GR:algebraic} is able to get
a finite slice of $\widetilde{\mbox{fiber}}_r(\mF)$,
parameterized by the generically chosen
$a_1,\ldots,a_d$ and $b_1,\ldots,b_d$.
\end{proof}

To illustrate what the polynomial system \reff{cmu:Mij:C+N*omg} looks like,
we consider general tensors in $\mt{S}^3(\cpx^3)$, i.e.,
$n=2$, $m=3$ and $r=4$. So,
\[
\mathbb{B}_0 = \{ 1, x_1, x_2, x_1^2 \}, \quad
\mathbb{B}_1 = \{ x_1x_2, x_2^2, x_1^3, x_1^2 x_2 \}.
\]
For $G \in \cpx^{\mathbb{B}_0 \times \mathbb{B}_1}$,
the generating polynomials are given as:
\[
\baray{rcl}
G_{(0,0),(1,1)} + G_{(1,0),(1,1)}  x_1 + G_{(0,1),(1,1)} x_2 + G_{(2,0),(1,1)} x_1^2 &-& x_1x_2, \\
G_{(0,0),(0,2)} + G_{(1,0),(0,2)}  x_1 + G_{(0,1),(0,2)} x_2 + G_{(2,0),(0,2)} x_1^2 &-& x_2^2, \\
G_{(0,0),(3,0)} + G_{(1,0),(3,0)}  x_1 + G_{(0,1),(3,0)} x_2 + G_{(2,0),(3,0)} x_1^2 &-& x_1^3, \\
G_{(0,0),(2,1)} + G_{(1,0),(2,1)}  x_1 + G_{(0,1),(2,1)} x_2 + G_{(2,0),(2,1)} x_1^2 &-& x_1^2x_2.
\earay
\]
The $A[\mF,\af]$ and $b[\mF,\af]$ ($\af \in \mathbb{B}_1$) are given as follows:
\[
A[\mF,(1,1)] = A[\mF,(0,2)]=
\bbm
\mF_{00}  & \mF_{10}  & \mF_{01}  & \mF_{20} \\
\mF_{10}  & \mF_{20}  & \mF_{11}  & \mF_{30} \\
\mF_{01}  & \mF_{11}  & \mF_{02}  & \mF_{21}
\ebm,
\]
\[
A[\mF,(3,0)] = A[\mF,(2,1)]=
\bbm
\mF_{00}  & \mF_{10}  & \mF_{01}  & \mF_{20}
\ebm,
\]
\[
b[\mF,(1,1)] = \bbm \mF_{11} \\ \mF_{21} \\ \mF_{12} \ebm, \quad
b[\mF,(0,2)] = \bbm \mF_{02} \\ \mF_{12} \\ \mF_{03} \ebm,
\]
\[
b[\mF,(3,0)] = \bbm \mF_{30} \ebm, \quad  b[\mF,(2,1)] =  \bbm \mF_{21} \ebm.
\]
In the above, $\mF$ is indexed as in \reff{index:F:af}.
The companion matrices are
\[
M_{x_1}(G) =  \bbm
0  &  0  & G_{(0,0),(1,1)}  &  G_{(0,0),(3,0)} \\
1  &  0  & G_{(1,0),(1,1)}  &  G_{(1,0),(3,0)} \\
0  &  0  & G_{(0,1),(1,1)}  &  G_{(0,1),(3,0)} \\
0  &  1  & G_{(2,0),(1,1)}  &  G_{(2,0),(3,0)} \\
\ebm,
\]
\[
M_{x_2}(G) =  \bbm
0  &   G_{(0,0),(1,1)}  &  G_{(0,0),(0,2)}  & G_{(0,0),(2,1)} \\
0  &   G_{(1,0),(1,1)}  &  G_{(1,0),(0,2)}  & G_{(1,0),(2,1)} \\
1  &   G_{(0,1),(1,1)}  &  G_{(0,1),(0,2)}  & G_{(0,1),(2,1)} \\
0  &   G_{(2,0),(1,1)}  &  G_{(2,0),(0,2)}  & G_{(2,0),(2,1)} \\
\ebm.
\]
In the parametrization \reff{fun:W(omg)},
the length of the unknown vector $\omega$ is $8$.
The dimension gap $d =2$.
After adding two general linear equations as in \reff{rand:aw=c},
we can reduce $\omega$ to a vector of $6$ unknowns.
Finally, we get a polynomial system of $12$ quadratic equations
and in $6$ unknowns. It can be solved efficiently by polynomial system solvers.
For general $\mF \in \mt{S}^3(\cpx^3)$, the resulting polynomial
system has $7$ distinct solutions,
so we can get $7$ rank decompositions.
%
%
This is demonstrated by Example~\ref{exmp:5.1}.

\subsection{A numerical algorithm}

In this subsection, we propose numerical methods
for solving \reff{cmu:Mij:C+N*omg},
together with \reff{rand:aw=c} if $d>0$.
There exist classical numerical methods for solving nonlinear systems
and nonlinear least-squares problems, e.g., Gauss-Newton,
trust region, and Levenberg-Marquardt methods.
We refer to \cite{DenSch83,Kel95,More78,yyx11} for the work in this area.
In practice, numerical methods are often more
efficient for solving large polynomial systems.

Most numerical methods need a good starting point.
For tensor decompositions, this can be done heuristically as follows.
First, solve the problem
\be \label{nLS:A=u^m}
\min_{u_1, \ldots, u_r \in \cpx^{n+1} }
\quad \| \mc{F} - \big( (u_1)^{\otimes m} +
\cdots + (u_r)^{\otimes m} \big)  \|^2,
\ee
with a random starting point. Let $(u_1^0, \ldots, u_r^0)$
be a computed solution of \reff{nLS:A=u^m}.
If $\mc{F} = (u_1^0)^{\otimes m} + \cdots + (u_r^0)^{\otimes m}$, then
\reff{s4:dcpA:u^m} is found and we are done. Otherwise,
write each $u_i^0 = \tau_i (1, v_i^0)$, with $v_i^0 \in \cpx^n$.
(If the first entry $(u_i^0)_0$ is zero or tiny, we can choose $v_i^0$ randomly.)
Let $G^0 \in \cpx^{\mathbb{B}_0 \times \mathbb{B}_1}$ be as
\be  \label{[v]B0*W0=v^af}
G^0  =
\Big([v_1^0]_{\mathbb{B}_0}
\quad  \cdots \quad [v_r^0]_{\mathbb{B}_0} \Big)^{-T}
\Big([v_1^0]_{\mathbb{B}_1}
\quad  \cdots \quad [v_r^0]_{\mathbb{B}_1} \Big)^{T}.
\ee
If $ \big([v_1^0]_{\mathbb{B}_0}
\quad  \cdots \quad [v_r^0]_{\mathbb{B}_0} \big)$ is singular,
or nearly singular, we can apply small perturbations to $v_1^0,\ldots, v_r^0$.
Then after, find $\omega^0$ such that
$C+N(\omega^0) \approx G^0$, by solving it as a linear least squares problem.
Using such $\omega^0$ as a starting point,
we solve the polynomial system \reff{cmu:Mij:C+N*omg},
together with \reff{rand:aw=c} if $d>0$.

Suppose a parameter $\hat{\omega}$ satisfying \reff{cmu:Mij:C+N*omg}
is found as above. Let $\hat{G} :=G(\hat{\omega})$.
Apply the method in \S \ref{sbsc:polyf=0} to
get the complex zeros of $\varphi[\hat{G}]$.
If it has $r$ distinct zeros, say,
$\hat{v}_1,\ldots, \hat{v}_r$,
then there exist $\hat{\lmd}_1,\ldots, \hat{\lmd}_r$ satisfying
\be \label{leq:Vlmd=A}
\hat{\lmd}_1 (1,\hat{v}_1)^{\otimes m} +\cdots +
\hat{\lmd}_r (1,\hat{v}_r)^{\otimes m}  = \mc{F}.
\ee
For each $i=1,\ldots,r$, let
$\hat{u}_i = \sqrt[m]{\hat{\lmd}_i}(1, \hat{v}_i)$, then
\be \label{u:otm=F}
(\hat{u}_1)^{\otimes m} +\cdots + (\hat{u}_r)^{\otimes m}  = \mc{F}.
\ee
Because of round-off errors, the equation \reff{u:otm=F} may not be satisfied
very accurately. In such case, we can use $(\hat{u}_1,\ldots, \hat{u}_r)$
as a starting point and solve the nonlinear least squares problem \reff{nLS:A=u^m} again.
Usually, it can be solved very fast because
\reff{u:otm=F} is almost satisfied.

When $\varphi[\hat{G}]$ is defective (i.e., $\varphi[\hat{G}]$ has a repeated zero),
\reff{u:otm=F} cannot be guaranteed.
Suppose $\varphi[\hat{G}]$ has $r_0$ ($<r$) distinct zeros,
say, $\hat{v}_1,\ldots, \hat{v}_{r_0}$.
Choose general vectors $\hat{v}_{r_0+1},\ldots, \hat{v}_r$,
and then get $\hat{\lmd}_1,\ldots, \hat{\lmd}_r$
by solving \reff{leq:Vlmd=A} as a linear least square problem.
Formulate $\hat{u}_i$ same as above,
and then solve \reff{nLS:A=u^m}
by using $(\hat{u}_1,\ldots, \hat{u}_r)$ as a starting point.
Sometimes, this helps get a tensor decomposition.

Combining the above, we get the following algorithm.

\begin{alg} \label{alg:rc:dcmp}
A numerical method for symmetric tensor decompositions.  \noindent
\begin{itemize}

\item [{\bf Input:}] A tensor $\mc{F} \in \mt{S}^m(\cpx^{n+1})$,
an integer $r \geq \rank_S(\mc{F})$ (the default value of $r$
is given by \reff{AlxHrchFml}).

\item [{\bf Output:}] A tuple $(u_1,\ldots, u_r)$
satisfying \reff{s4:dcpA:u^m}.

%
%

\end{itemize}

\bit

\item [Step 0:] Parameterize $G$ as in \reff{fun:W(omg)}.
Let $d=r(n+1)-1-\dim \sig_r$.

\item [Step 1:] Solve \reff{nLS:A=u^m} with a random starting point.
Let $(u_1^0, \ldots, u_r^0)$ be a computed solution. If
$\mF = (u_1^0)^{\otimes m} + \cdots + (u_r^0)^{\otimes m}$,
output $(u_1^0, \ldots, u_r^0)$ and stop.

\item [Step 2:]
Let $v_i^0$ be such that $u_i^0 = \tau_i(1, v_i^0)$.
(If $(u_i^0)_0 = 0$, choose $v_i^0$ randomly.)
Compute $G^0$ as in \reff{[v]B0*W0=v^af}.
Find $\omega^0$ by solving $C+N(\omega) \approx G^0$
as a linear least squares problem.

\item [Step 3:] Solve \reff{cmu:Mij:C+N*omg}, together with
\reff{rand:aw=c} if $d>0$,
with $\omega^0$ a starting point,
to get a solution $\hat{\omega}$ by a numerical method.
Let $\hat{G} := G(\hat{\omega})$.

\item [Step 4:] Compute the complex zeros $\hat{v}_1,\ldots,\hat{v}_{r_0}$
of $\varphi[\hat{G}]$ by the method in \S\ref{sbsc:polyf=0}.
(If $r_0<r$, choose $\hat{v}_{r_0+1},\ldots, \hat{v}_{r}$ generically.)

\item [Step 5:] Determine $\hat{\lmd}_1,\ldots,\hat{\lmd}_r$ by solving \reff{leq:Vlmd=A}
as a linear least squares problem.

\item [Step 6:] Let $\hat{u}_i := (\hat{\lmd}_i)^{1/m}(1,\hat{v}_i)$
for $i=1,\ldots,r$. Solve \reff{nLS:A=u^m} with $(\hat{u}_1,\ldots, \hat{u}_r)$
as the starting point.
Output the solution as $(u_1,\ldots, u_r)$.

\eit

\end{alg}

In Step~0, if $G$ cannot be parameterized in the form \reff{fun:W(omg)}
(i.e., the set $\mathscr{G}(\mF)$ is empty),
then it is most likely that $\rank_S(\mF) > r$.
For such case, we need to increase the value of $r$.

In Algorithm~\ref{alg:rc:dcmp}, the major computation is in Step~3.
Most numerical methods cannot theoretically guarantee to find a solution
of \reff{cmu:Mij:C+N*omg}, together with \reff{rand:aw=c} if $d>0$.
However, in practice, we are often able to get one.

\begin{prop} \label{thm:STD:numalg}
Suppose a vector $\hat{\omega}$ satisfying \reff{cmu:Mij:C+N*omg},
together with \reff{rand:aw=c} if $d>0$, is found.
If the polynomial tuple $\varphi[\hat{G}]$ is nondefective,
then Algorithm~\ref{alg:rc:dcmp} produces a tensor decomposition for $\mF$.
\end{prop}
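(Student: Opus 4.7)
The plan is to trace through Steps~3--6 of Algorithm~\ref{alg:rc:dcmp} and verify that each step does what is promised, relying on the structural results already proved (especially Theorem~\ref{thm:dstcV=>lmd} and Proposition~\ref{phi[W]<=>Mcmu}) rather than any new algebraic identity. The key observation is that the parameterization \reff{fun:W(omg)} was chosen precisely so that \emph{every} value of $\omega$ yields $G(\omega) \in \mathscr{G}(\mF)$; the commutator equations \reff{cmu:Mij:C+N*omg} are the additional constraints enforcing consistency, and the linear side constraints \reff{rand:aw=c} only cut down dimension without changing membership in $\mathscr{G}(\mF)$.

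First, I would note that since $\hat{\omega}$ satisfies \reff{cmu:Mij:C+N*omg}, Proposition~\ref{phi[W]<=>Mcmu} guarantees that the tuple $\varphi[\hat{G}]$ has exactly $r$ complex zeros, counted with multiplicity, where $\hat{G} = G(\hat{\omega})$. The nondefectiveness hypothesis then upgrades this to $r$ \emph{distinct} complex zeros $\hat{v}_1, \ldots, \hat{v}_r$, which are exactly what Step~4 extracts via the method of \S\ref{sbsc:polyf=0}. At the same time, by construction of the parameterization \reff{fun:W(omg)} through \reff{af:AW=b} and \reff{W=c+N*omg:af}, every column $\hat{G}(:,\af)$ satisfies the linear system \reff{af:AW=b}, so $\hat{G} \in \mathscr{G}(\mF)$.

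Having $\hat{G} \in \mathscr{G}(\mF)$ together with $r$ distinct common zeros of $\varphi[\hat{G}]$, I would then invoke Theorem~\ref{thm:dstcV=>lmd}(i) directly: it yields $\mbox{perm}(\hat{v}_1,\ldots,\hat{v}_r) \in \mbox{afiber}_r(\mF)$, meaning there exist scalars $\hat{\lmd}_1, \ldots, \hat{\lmd}_r \in \cpx$ with
\[
\mF = \hat{\lmd}_1 (1,\hat{v}_1)^{\otimes m} + \cdots + \hat{\lmd}_r (1,\hat{v}_r)^{\otimes m}.
\]
This is precisely the linear system \reff{leq:Vlmd=A} that Step~5 solves. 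Since this system admits an exact solution, the least-squares solve returns that exact $(\hat{\lmd}_1,\ldots,\hat{\lmd}_r)$ (up to numerical error of the linear solver, which for the purposes of the proposition is treated as exact). Setting $\hat{u}_i := \sqrt[m]{\hat{\lmd}_i}(1,\hat{v}_i)$ in Step~6 then gives $\sum_i (\hat{u}_i)^{\otimes m} = \mF$, so the objective of \reff{nLS:A=u^m} is zero at the starting point, and any descent method returns a valid decomposition.

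The only subtle point—and what I would call out as the place to be careful rather than a genuine obstacle—is the implicit hypothesis hidden in Step~4 that $\hat{v}_1,\ldots,\hat{v}_r$ are all finite affine points and that the matrix $([{\hat v}_1]_{\mathbb{B}_0} \cdots [{\hat v}_r]_{\mathbb{B}_0})$ is nonsingular, so that the linear system \reff{leq:Vlmd=A} is consistent and uniquely solvable. The first is automatic from the construction of $\varphi[G,\af]$, whose leading monomials lie in $\mathbb{B}_1$. For the second, by Proposition~\ref{phi[W]<=>Mcmu} the distinct zeros $\hat{v}_1,\ldots,\hat{v}_r$ yield simultaneous diagonalization of $M_{x_1}(\hat{G}),\ldots,M_{x_n}(\hat{G})$, and the common eigenvectors are (up to scaling) the vectors $[\hat{v}_i]_{\mathbb{B}_0}$; their linear independence is therefore a consequence of the radicality of $\langle \varphi[\hat{G}]\rangle$. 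With this remark in place, the verification is complete.
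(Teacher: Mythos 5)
Your proposal is correct and follows essentially the same route as the paper's own (much terser) proof: nondefectiveness gives $r$ distinct zeros of $\varphi[\hat{G}]$, and since $\hat{G}\in\mathscr{G}(\mF)$ by the parameterization \reff{fun:W(omg)}, Theorem~\ref{thm:dstcV=>lmd}(i) yields the coefficients $\hat\lmd_i$ and hence the decomposition recovered in Steps~4--6. Your extra remarks on the affineness of the zeros and the independence of the vectors $[\hat v_i]_{\mathbb{B}_0}$ are fine but not needed, since consistency of \reff{leq:Vlmd=A} already follows from membership in $\mbox{afiber}_r(\mF)$.
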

\begin{proof}
When $\varphi[\hat{G}]$ is nondefective, the generating polynomial tuple
$\varphi[\hat{G}]$ has $r$ distinct zeros.
The conclusion can be implied by Theorem~\ref{thm:dstcV=>lmd}.
\end{proof}

\begin{remark}  \label{rmk:cmu:okay}
In Algorithm~\ref{alg:rc:dcmp}, the nondefectiveness of
$\varphi[\hat{G}]$ cannot be always guaranteed.
However, the defectiveness does not happen very much.
As shown in \S\ref{sbsc:G:nodfc}, $\varphi[\hat{G}]$ is defective
only if $\hat{G}$ satisfies the additional equations
\be \label{disc:Mxi(W)=0}
\mbox{dis}\Big(M_{x_1}(\hat{G})\Big) = \cdots =
\mbox{dis}\Big(M_{x_n}(\hat{G})\Big) = 0.
\ee
So, $\varphi[\hat{G}]$ is usually nondefective.
This fact was observed in our numerical experiments.
To theoretically guarantee the nondefectiveness of
$\varphi[\hat{G}]$, we need to solve \reff{inV*Mxi*V=Di}.
Generally, \reff{inV*Mxi*V=Di} is much harder to solve,
because of the extra new matrix variables $V$ and $D_1, \ldots, D_n$.
\end{remark}

\subsection{Some comparisons}

Algorithms~\ref{alg:GR:algebraic} and \ref{alg:rc:dcmp}
have their own distinctive features.
Here, we make some comparisons between them.
\bnum

\item [1)] Algorithm~\ref{alg:GR:algebraic} uses algebraic methods to
solve the polynomial system \reff{cmu:Mij:C+N*omg},
together with \reff{rand:aw=c} if $d>0$.
Computing Gr\"{o}bner bases is usually required,
so Algorithm~\ref{alg:GR:algebraic} is often applied for small tensors.
On the other hand, Algorithm~\ref{alg:rc:dcmp}
uses numerical methods to solve the polynomial system.
So it can be applied for bigger tensors.
It often gets a solution,
but this cannot be mathematically guaranteed.

\item [2)] Algorithm~\ref{alg:GR:algebraic} assumes that
$\mF$ is a general point in $\sig_r$. Thus, its rank needs to be known
in advance. For general tensors in $\mc{S}^m(\cpx^{n+1})$,
their ranks are given by \reff{AlxHrchFml}.
On the other hand, Algorithm~\ref{alg:rc:dcmp}
only requires an upper bound $r \geq \rank_S(\mF)$.
A default value for $r$ can be generally chosen as in \reff{AlxHrchFml}.

\item [3)] For general $\mF \in \sig_r$, Algorithm~\ref{alg:GR:algebraic}
can find all rank decompositions if $d=0$,
and it can find a finite slice if $d>0$
(cf. Proposition~\ref{thm:symTD:prop}).
On the other hand, Algorithm~\ref{alg:rc:dcmp}
typically can only get one.
However, if Algorithm~\ref{alg:rc:dcmp} is repeatedly used
with random $a_i, b_i$ in \reff{rand:aw=c},
we may be able to get several different tensor decompositions.

%
%

\enum

As a summary, Algorithm~\ref{alg:GR:algebraic}
has more theoretical properties, but it usually works for small tensors;
Algorithm~\ref{alg:rc:dcmp} has less
theoretical properties, but it works more efficiently for bigger tensors.

\section{Computational experiments}
\label{sc:comp}
\setcounter{equation}{0}

This section reports numerical experiments for computing symmetric tensor decompositions.
The computation is implemented in 64-bit MATLAB R2012a,
on a Lenovo Laptop with Intel(R) Core(TM)i7-3520M CPU@2.90GHz and RAM 16.0G.
For Algorithm~\ref{alg:GR:algebraic}, the MATLAB
symbolic function {\tt solve} is used in Step~1.
For Algorithm~\ref{alg:rc:dcmp}, the MATLAB numerical
function {\tt lsqnonlin} is used to solve \reff{nLS:A=u^m} in Step~1 and Step~6,
and {\tt fsolve} is used in Step~3.

The output tuple $(u_1, \ldots, u_r)$
may not give an exact decomposition for $\mF$,
because of round-off errors.
We use the decomposition error, which is defined as
\[
\| (u_1)^{\otimes m} + \cdots + (u_r)^{\otimes m} - \mF \|,
\]
to measure the computational accuracy.
When the decomposition error is tiny (e.g., in the order of $10^{-10}$),
we cannot mathematically conclude that $\rank_S(\mF) \leq r$.
Indeed, we cannot even mathematically conclude that $\rank_{SB}(\mF) \leq r$.
Generally, we are not able to get exact decompositions. For such reasons,
all the claims about ranks of tensors in the examples
are modulo round-off errors. The presented tensors are all symmetric,
so their symmetric ranks are just called ranks, for convenience.

To present tensor decompositions, we display the vectors
$u_1, \ldots, u_r$ column by column, from left to right,
and from top to bottom (if one row block is not enough).
For neatness, we only display four decimal digits
for the real and imaginary parts.

Displaying all entries of a tensor usually
occupies a lot of space. To save space,
we display a tensor $\mF$ in three ways. The first one is to display
the vector $\mbox{uptri}(\mF)$ of upper triangular entries
in the lexicographical ordering, as in \reff{df:up:tri}.
The second way is to display $\mF$ as
the homogeneous polynomial $\mF(\tilde{x})$ as in \reff{df:F(tdx)}.
The third way is to give explicit formula for
$\mF_{i_1 \ldots i_m}$ if it exists.
The first way is convenient for small tensors,
the second one is good for tensors having a lot of zeros,
and the third one is good for tensors that are given by explicit formulae.

\subsection{Some technical tricks}

The following tricks are used for pre-processing of input tensors
and post-processing of output decompositions.

\smallskip
\noindent
{\bf Generic linear transformations} \,
The proposed methods assume the tensors are general.
In practice, it is hard to check such assumptions.
However, linear transformation is very useful for this purpose.
For a nonsingular matrix $Q \in \cpx^{(n+1)\times (n+1)}$,
define the linear transformation
$\mathscr{L}_Q$ from $\mt{S}^m(\cpx^{n+1})$ to itself, such that
\be \label{tsr:LnTrs:G}
\mathscr{L}_Q \Big( (u_1)^{\otimes m} + \cdots + (u_k)^{\otimes m} \Big) =
  (Q u_1)^{\otimes m} + \cdots + (Q u_k)^{\otimes m}.
\ee
Note that $\mathscr{L}_{Q^{-1}}(\mathscr{L}_Q (\mF) ) = \mF$ for all $\mF$.
If it is computed that
\[
\mathscr{L}_Q (\mF) =   (\tilde{u}_1)^{\otimes m}
+ \cdots + (\tilde{u}_r)^{\otimes m},
\]
then we can get a decomposition for $\mF$ as
\[
\mF = \mathscr{L}_{Q^{-1}}(\mathscr{L}_Q (\mF) )=
 (Q^{-1}\tilde{u}_1)^{\otimes m} + \cdots + (Q^{-1} \tilde{u}_r)^{\otimes m}.
\]
For a general $Q$, $\mathscr{L}_Q (\mF)$ is more likely
to be general than $\mF$ is.
In computation, we often choose $Q$ as a unitary matrix,
from the QR factorization of a randomly generated complex matrix.

\smallskip
\noindent
{\bf A length reduction process} \,
Algorithm~\ref{alg:rc:dcmp} only requires an upper bound $r$
for the rank. For general tensors in $\mc{S}^m(\cpx^{n+1})$,
their ranks are given by \reff{AlxHrchFml},
so Algorithm~\ref{alg:rc:dcmp} can produce rank decompositions.
For nongeneral tensors, however,
Algorithm~\ref{alg:rc:dcmp} may not produce rank decompositions.
Here, we propose a heuristic trick for reducing decomposition lengths.
Suppose we have computed that
\be \label{F=sum:ui:rc}
\mc{F} = (u_1^{old})^{\otimes m} + \cdots +  (u_r^{old})^{\otimes m}.
\ee
Then, $\rank_S(\mc{F}) \leq r$.
We can attempt to get a shorter decomposition than \reff{F=sum:ui:rc}.
Consider the optimization problem (with $\ell := r$)
\be \label{nls:A-sui:ell-1}
\min_{u_1, \ldots, u_{\ell-1} \in \cpx^{n+1} }
\quad  \| \mc{F} - \big((u_1)^{\otimes m} + \cdots
+ (u_{\ell-1})^{\otimes m} \big)  \|^2.
\ee
Reorder $u_1^{old},\ldots,u_r^{old}$ such that
$\|u_1^{old}\|_2 \geq \cdots \geq \|u_r^{old}\|_2$.
Using $(u_1^{old},\ldots, u_{\ell-1}^{old})$
as a starting point, we solve \reff{nls:A-sui:ell-1}
for an optimizer $(u_1^{new},\ldots, u_{\ell-1}^{new})$,
by using classical nonlinear optimization methods. If
\[
\mc{F} = (u_1^{new})^{\otimes m} + \cdots +  (u_{\ell-1}^{new})^{\otimes m},
\]
then we get a shorter decomposition and $\rank_S(\mc{F}) \leq \ell-1$.
Such attempting can be repeated,
until no further shorter decompositions can be found.
This process often produces rank decompositions.

\subsection{Examples for Algorithm~\ref{alg:GR:algebraic} }
\label{ssc:cmp:algbra}

We apply Algorithm~\ref{alg:GR:algebraic}
to randomly generated tensors. For cleanness of the presentation,
we round tensor entries to integers and display
their upper triangular entries.
In Step~1 of Algorithm~\ref{alg:GR:algebraic},
the MATLAB function {\tt solve} returns a set of symbolic objects
for the solutions to \reff{cmu:Mij:C+N*omg},
together with \reff{rand:aw=c} if $d>0$.
We first convert them to floating point numbers in double precision,
and then construct tensor decompositions numerically.

\begin{exm} \label{exmp:5.1}
Consider the tensor in $\mt{S}^3(\cpx^3)$
whose upper triangular entries are
\[
  -8 ,\quad    2  ,\quad    15   ,\quad   -7   ,\quad   17   ,\quad    7  ,\quad    17   ,\quad
     4   ,\quad    3    ,\quad  18.
\] \noindent The
generic rank is $4$. The dimension gap $d=2$.
Applying Algorithm~\ref{alg:GR:algebraic} with $r=4$,
we got $7$ decompositions, one of which is {\scriptsize
\begin{verbatim}
   1.8662 + 0.0000i   1.1593 + 2.0080i   0.7382 + 0.8006i   0.7382 - 0.8006i
   1.8396 + 0.0000i   0.7799 + 1.3508i  -1.6894 - 1.5455i  -1.6894 + 1.5455i
   2.3702 + 0.0000i  -0.7610 - 1.3180i  -0.7749 + 0.6705i  -0.7749 - 0.6705i
\end{verbatim} \noindent}It took a few seconds.
The decomposition error is around $10^{-13}$.
This tensor is randomly generated
(rounded to integers), so its rank is expected to be $4$.
For other tensors in $\mt{S}^3(\cpx^3)$ generated in the same way,
we also get $7$ rank decompositions.
\end{exm}

\begin{exm}  \label{exmp:5.2}
Consider the tensor in $\mt{S}^4(\cpx^3)$
whose upper triangular entries are
\[
  -7  ,\,  -2  ,\,    11   ,\,   18   ,\,   -7   ,\,   -1   ,\,    3   ,\,   -2   ,\,
   -15   ,\,   -9   ,\,  -13   ,\,   -14   ,\,  -11   ,\,  -13  ,\,    18 .
\] \noindent The generic rank is $6$.
The dimension gap $d=3$.
Algorithm~\ref{alg:GR:algebraic}, with $r=6$,
produced $8$ rank decompositions, one of which is{\tiny
\begin{verbatim}
   0.7514 - 0.1018i   0.7514 + 0.1018i   1.1580 - 1.1580i   1.1570 - 0.8269i   1.1570 + 0.8269i   1.5709 - 0.0000i
   1.0793 + 0.8609i   1.0793 - 0.8609i   0.0359 - 0.0359i   1.5058 + 0.8933i   1.5058 - 0.8933i  -1.3873 + 0.0000i
   0.7092 + 2.1400i   0.7092 - 2.1400i  -0.7264 + 0.7264i  -0.2329 - 0.1421i  -0.2329 + 0.1421i   1.4407 - 0.0000i
\end{verbatim} \noindent}It took a few seconds.
The decomposition error is around $10^{-14}$.
The catalecticant matrix has rank $6$,
so the tensor rank is $6$ by Lemma~\ref{lm:relF:ranks}.
This tensor is randomly generated (rounded to integers).
For other tensors in $\mt{S}^4(\cpx^3)$ generated in the same way,
we also get $8$ rank decompositions.
\end{exm}

\begin{exm}
Consider the tensor  in $\mt{S}^3(\cpx^4)$
whose upper triangular entries are {\small
\[
-20 ,\,  -17   ,\,   16   ,\,   10   ,\,   -4   ,\,   -8   ,\,    3   ,\,   -1  ,\,   -19   ,\,   -6   ,\,   -6
   ,\,   7   ,\,    9   ,\,  -13    ,\,   1   ,\,   17   ,\,   11  ,\,   -17   ,\,    7   ,\,    9 .
\] \noindent}The generic rank is $5$.
The dimension gap $d=0$.
Algorithm~\ref{alg:GR:algebraic} with $r=5$
produced the decomposition of length $5$: {\scriptsize
\begin{verbatim}
   1.4082 - 0.3142i   1.4082 + 0.3142i   1.2259 + 2.1233i   1.0401 + 1.4018i   1.0401 - 1.4018i
  -1.3913 + 0.6435i  -1.3913 - 0.6435i   0.4601 + 0.7968i   0.4605 + 1.1233i   0.4605 - 1.1233i
   1.6719 + 1.0000i   1.6719 - 1.0000i  -0.9877 - 1.7107i   1.3185 + 0.2770i   1.3185 - 0.2770i
  -0.7396 + 1.9762i  -0.7396 - 1.9762i  -0.6792 - 1.1763i  -1.7818 + 0.3468i  -1.7818 - 0.3468i
\end{verbatim}  \noindent}It took a few seconds.
The decomposition error is around $10^{-13}$.
This tensor is randomly generated (rounded to integers),
so its rank is expected to be $5$.
A general tensor in $\mt{S}^3(\cpx^4)$ has a unique rank decomposition,
by the Sylvester Pentahedral Theorem (cf.~\cite{OedOtt13}).
For other tensors in $\mt{S}^3(\cpx^4)$ generated in the same way,
Algorithm~\ref{alg:GR:algebraic} produces a unique decomposition.
\end{exm}

\begin{exm}
Consider the tensor $\mF \in \mt{S}^5(\cpx^3)$
whose upper triangular entries are {\small
\[
 13 ,\,  -15   ,\,   -2  ,\,   -18   ,\,    0   ,\,    6   ,\,   -4  ,\,   -19   ,\,   -1   ,\,   12   ,\,   13
  ,\,  -13   ,\,  -17   ,\,   -1   ,\,   16   ,\,  -11   ,\,   14   ,\,   -4   ,\,    11   ,\,   14  ,\,    19.
\] \noindent}The generic rank is $7$.
The dimension gap $d=0$.
Algorithm~\ref{alg:GR:algebraic} with $r=7$
produced the decomposition of length $7$ {\scriptsize
\begin{verbatim}
   0.6198 - 0.2015i   0.6198 + 0.2015i   1.6337 + 0.2854i   1.6337 - 0.2854i
   1.1238 + 1.0620i   1.1238 - 1.0620i   0.2365 + 1.5462i   0.2365 - 1.5462i
   0.2522 + 1.5162i   0.2522 - 1.5162i   1.1934 - 0.0441i   1.1934 + 0.0441i

   1.5652 + 0.5541i   1.5652 - 0.5541i   0.2086 - 0.0000i
  -0.6180 + 0.0916i  -0.6180 - 0.0916i  -1.5724 + 0.0000i
  -0.0397 + 1.1272i  -0.0397 - 1.1272i   1.1784 - 0.0000i
\end{verbatim} \noindent}It took a few seconds.
The decomposition error is around $10^{-12}$.
This tensor is randomly generated (rounded to integers),
so its rank is expected to be $7$.
A general tensor in $\mt{S}^5(\cpx^3)$
has a unique rank decomposition (cf.~\cite{OedOtt13}).
For other tensors in $\mt{S}^5(\cpx^3)$ generated in the same way,
Algorithm~\ref{alg:GR:algebraic} produces a unique decomposition.
\end{exm}

\subsection{Examples for Algorithm~\ref{alg:rc:dcmp} }
\label{ssc:comp:num}

For the tensors in this subsection, the decompositions
are computed by Algorithm~\ref{alg:rc:dcmp}.
We first present examples from the existing literature,
and then give examples in which
the tensor entries are given by explicit patterns
or randomly generated.

\begin{exm}  \label{exmp:land:5.5}
(\cite{Land12})
(i) Consider the tensor $\mF \in \mt{S}^4(\cpx^3)$ such that {\small
\[
\mF (x) = 81x_0^4 + 17x_1^4 + 626 x_2^4 -144 x_0x_1^2x_2 +216 x_0^3x_1
-108x_0^3x_2 + 216 x_0^2x_1^2 + 54 x_0^2x_2^2 +
\]
\[
96 x_0 x_1^3 -12 x_0x_2^3
-52 x_1^3x_2 + 174 x_1^2x_2^2 - 508 x_1x_2^3 + 72 x_0x_1 x_2^2 -216 x_0^2x_1x_2.
\] \noindent}Its rank is $2$.
When the default value $r=6$ is used, Algorithm~\ref{alg:rc:dcmp}
produced a decomposition of length $6$.
After the length reduction process, we got the rank decomposition
$\mF = (u_1)^{\otimes 4} + (u_2)^{\otimes 4}$ with
\[
u_1 = (0, 1, -5), \quad u_2 = (3,2,-1).
\]
(ii)
Consider the determinantal tensor $\mF \in \mt{S}^3( \cpx^{6} )$, i.e.,
\[
\mF(x) =
 - x_{5}\, {x_{1}}^2 + 2\, x_{1}\, x_{2}\, x_{4} - x_{3}\, {x_{2}}^2
 - x_{0}\, {x_{4}}^2 + x_{0}\, x_{3}\, x_{5}.
\]
It is known that $\rank_S(\mF) \geq 14$ (cf.~\cite[Theorem~9.3.2.1]{Land12}),
bigger than the generic rank $10$.
When $r=10$ is applied, Algorithm~\ref{alg:rc:dcmp}
produced a decomposition with error around $0.0058$.
(There are numerical troubles in solving the equations.)
When $r=11$ is applied, Algorithm~\ref{alg:rc:dcmp}
produced the decomposition of length $11$: {\tiny
\begin{verbatim}
   1.2359 + 0.5467i  -1.8366 + 0.6526i   0.0963 + 0.7613i   0.2952 + 1.3602i  -1.4792 + 0.4006i  -0.6387 + 1.6903i
   0.4523 + 0.9630i   0.4903 + 0.4622i  -1.4703 + 1.4378i  -2.4724 - 0.9869i  -1.2629 - 2.6269i   1.0544 - 2.2173i
  -1.0502 + 0.4300i   1.1265 - 1.3684i  -0.2843 - 1.4483i   0.4627 - 1.5513i   1.6149 - 0.1830i   0.1713 - 0.6364i
   0.5343 + 0.7474i   0.0074 - 0.7733i  -0.0470 + 1.3051i   0.5961 - 0.5373i   0.7353 + 0.7080i   1.0122 + 0.0752i
  -1.0428 - 0.8114i   0.5281 + 0.4770i  -0.0879 - 1.2599i   0.5852 + 0.2262i   0.1379 + 0.6921i  -0.7050 + 1.0983i
   0.3393 - 0.2303i  -0.3120 + 0.3630i   0.1766 + 0.3329i  -0.0228 + 0.4151i  -0.3364 + 0.1479i   0.0226 - 0.1473i

  -0.2274 + 1.6711i   2.2580 + 0.8493i  -1.5360 + 1.2919i   1.4677 - 0.4252i   0.5708 + 2.7533i
  -2.0336 - 0.4841i   0.1377 - 0.7207i   1.6532 + 1.0993i   0.1846 + 0.7815i   1.3681 + 0.0110i
   0.2045 - 0.8710i  -1.2248 - 0.0319i  -0.0292 - 1.2676i  -1.2821 - 0.2847i  -1.2838 - 1.5426i
   1.0508 - 0.3553i  -0.0450 + 0.9011i   0.6547 + 0.8055i   1.2157 + 0.5647i  -0.3345 - 0.1843i
   0.5834 + 0.7497i  -0.1419 + 0.3505i  -1.3341 - 0.1398i  -0.6288 + 0.0950i  -0.0307 + 0.4340i
  -0.1908 + 0.0042i   0.0936 - 0.1556i   0.1104 + 0.1668i   0.3394 + 0.0511i   0.2108 + 0.2644i
\end{verbatim} \noindent}It took a few seconds.
The decomposition error is around $10^{-13}$. \\
(iii) Consider the permanent tensor $\mF \in \mt{S}^3(\cpx^{6} )$, i.e.,
\[
\mF(x) =
x_{5}\, {x_{1}}^2 + 2\, x_{1}\, x_{2}\, x_{4} + x_{3}\, {x_{2}}^2
 + x_{0}\, {x_{4}}^2 + x_{0}\, x_{3}\, x_{5}.
\]
It is known that $\rank_S(\mF) \geq 12$ (cf.~\cite[Theorem~9.3.2.1]{Land12}),
bigger than the generic rank $10$.
Applying Algorithm~\ref{alg:rc:dcmp} with $r=10$,
we got the decomposition of length $10$: {\tiny
\begin{verbatim}
  -0.0404 + 0.1394i  -0.1895 - 0.1734i  -0.6525 - 0.2618i  -1.0300 + 0.3085i  -0.3315 + 0.9944i
  -0.2942 + 0.5660i   0.4021 + 0.4430i  -0.1272 + 0.4952i   0.0165 - 0.3695i  -0.4038 + 0.0154i
  -0.2450 + 0.7427i   0.2428 + 0.2709i   1.8617 + 0.4011i   2.7389 - 1.4007i   0.3827 - 2.9697i
  -0.2184 - 0.0602i  -0.1708 + 0.1502i   0.0923 + 0.0113i   0.0129 + 0.0630i  -0.0332 - 0.0065i
  -0.8474 + 1.4951i   0.8221 + 1.5331i   0.0568 - 0.6901i   0.2371 + 0.0380i   0.1770 - 0.4685i
  -0.3661 + 0.9878i   0.6529 + 0.6020i  -0.3873 + 0.9772i   0.1079 + 1.8126i   1.6032 + 0.9343i

  -0.2246 + 0.3292i   0.1790 + 0.3603i  -0.1363 - 0.7170i   0.7674 - 0.4874i   0.0714 - 0.7727i
   0.2580 + 0.0409i   0.2429 - 0.1631i  -0.5041 + 0.1659i   0.0787 + 0.2020i   0.4545 - 0.2890i
  -1.2741 + 1.7287i   0.8934 + 1.9753i  -0.1586 + 1.3532i  -1.5725 + 0.3272i  -0.6530 + 1.2214i
  -0.0201 + 0.1446i  -0.0351 - 0.0399i   0.0418 + 0.0870i   0.0124 - 0.0505i   0.1085 + 0.0491i
   0.6907 + 0.1353i   0.6700 - 0.4153i   0.4954 - 0.0066i   0.7072 + 0.3046i   0.2027 - 0.5534i
  -1.7998 + 2.2648i   1.1061 + 2.7028i   1.1682 + 1.6344i  -1.5470 + 2.3971i   0.9532 + 2.3314i
\end{verbatim} \noindent}It took a few seconds.
The decomposition error is around $10^{-11}$.
\end{exm}

For the determinantal tensor in Example~\ref{exmp:land:5.5}(ii),
its true rank is at least $14$,
bigger than $10$ (the generic one of the space $\mt{S}^3(\cpx^6)$).
Algorithm~\ref{alg:rc:dcmp} did not get an accurate numerical
decomposition of length $10$.
The equation \reff{cmu:Mij:C+N*omg}, for $r=10$,
is ill-conditioned, and it cannot be solved accurately.
When $r$ is increased to $11$, the equation \reff{cmu:Mij:C+N*omg}
is better-conditioned, and can be solved easier.
The mathematical reasons for such numerical issues
are not known to the author.

\begin{exm} (\cite{BCMT10})
(i) Consider the tensor $\mF \in \mt{S}^5 ( \cpx^3 )$ such that {\smaller \smaller
\[
\mF(x) = (x_0+2x_1+3x_2)^5 + (x_0-2x_1+3x_2)^5 + \frac{1}{3} (x_0-12x_1-3x_2)^5
+\frac{1}{5} (x_0 + 12 x_1 - 13x_2)^5.
\] \noindent}Its rank is $4$.
When the default value $r=7$ is used, Algorithm~\ref{alg:rc:dcmp}
produced a decomposition of length $7$.
After the length reduction process, we got a rank decomposition, which is {\tiny
\begin{verbatim}
   0.2240 + 0.6893i  -0.6494 + 0.4718i  -0.8090 - 0.5878i  -0.8090 - 0.5878i
   2.6876 + 8.2717i   7.7932 - 5.6621i  -1.6180 - 1.1756i   1.6180 + 1.1756i
  -2.9116 - 8.9610i   1.9483 - 1.4155i  -2.4271 - 1.7634i  -2.4271 - 1.7634i
\end{verbatim} \noindent}It took a few seconds.
The decomposition error is around $10^{-10}$.
\\
(ii) Consider the tensor $\mF \in \mt{S}^4 ( \cpx^3 )$ such that {\smaller
\[
\mF(x) =79x_0x_1^3 + 56x_0^2x_2^2 + 49 x_1^2x_2^2+4x_0x_1x_2^2 + 57x_0^3x_1
\] \noindent}Its rank is $6$.
When the default value $r=6$ is applied, Algorithm~\ref{alg:rc:dcmp}
produced the rank decomposition: {\tiny
\begin{verbatim}
   0.8027 - 0.8235i   0.7864 + 0.1426i  -1.0389 + 1.0040i  -0.0325 + 1.4050i  -0.4618 + 0.5687i   0.0741 + 1.2199i
  -1.2113 + 1.2830i  -1.6084 - 0.0089i   1.2660 - 1.1355i   0.0542 + 1.5700i   0.3944 + 0.4603i  -0.1015 + 1.5205i
  -1.0647 - 1.7566i   0.4808 - 2.0482i  -0.6832 - 0.3842i  -0.0450 + 0.9696i  -0.9270 - 1.1481i   0.0658 - 1.1902i
\end{verbatim} \noindent}It took a few seconds.
The decomposition error is around $10^{-12}$.
\end{exm}

\begin{exm}
Consider the tensor $\mF \in \mt{S}^3( \cpx^4 )$ given as
\[
\mF_{i_1 i_2 i_3} =  (1+i_1i_2i_3)^{-1}
 \quad (0 \leq i_1, i_2, i_3 \leq 3).
\]
Applying Algorithm~\ref{alg:rc:dcmp} with the default value $r=5$,
we got the decomposition: {\tiny
\begin{verbatim}
  -0.0000 - 0.0000i  -0.5000 - 0.8660i  -0.0000 - 0.0000i  -0.0000 - 0.0000i   0.0000 - 0.0000i
  -0.8792 + 0.0561i  -0.5000 - 0.8660i   0.0518 - 0.1252i  -0.3898 - 0.3393i  -0.2517 - 0.3888i
  -0.9609 + 0.0129i  -0.5000 - 0.8660i  -0.1473 + 0.1377i  -0.2421 - 0.0960i  -0.2566 - 0.1486i
  -0.9894 - 0.0118i  -0.5000 - 0.8660i  -0.2603 + 0.2142i  -0.2163 + 0.0472i  -0.1944 - 0.0513i
\end{verbatim} \noindent}It took a few seconds.
The decomposition error is around $10^{-14}$.
The catalecticant matrix has rank $4$.
We are not sure whether the above gives a rank decomposition or not.
\end{exm}

\begin{exm}
Consider the tensor $\mF \in \mt{S}^3( \cpx^5 )$ given as
\[
\mF_{i_1 i_2 i_3} = i_1i_2i_3-i_1-i_2 -i_3
 \quad (0 \leq i_1, i_2, i_3 \leq 4).
\]
The catalecticant matrix has rank $2$.
Applying Algorithm~\ref{alg:rc:dcmp} with $r=2$,
we got the decomposition: {\tiny
\begin{verbatim}
  -0.6874 + 0.3969i  -0.6874 - 0.3969i
  -1.0842 - 0.2905i  -1.0842 + 0.2905i
  -1.4811 - 0.9779i  -1.4811 + 0.9779i
  -1.8779 - 1.6652i  -1.8779 + 1.6652i
  -2.2748 - 2.3526i  -2.2748 + 2.3526i
\end{verbatim} \noindent}It took a few seconds.
The decomposition error is around $10^{-14}$.
The above gives a rank decomposition by Lemma~\ref{lm:relF:ranks}.
\end{exm}

\begin{exm}
Consider the tensor $\mF \in \mt{S}^4( \cpx^4 )$ given as
\[
\mF_{i_1 i_2 i_3 i_4} =  \tan( i_1 i_2 i_3 i_4  )
 \quad (0 \leq i_1, i_2, i_3, i_4 \leq 4).
\]
With the default value $r=10$, Algorithm~\ref{alg:rc:dcmp}
produced a decomposition of length $10$. After the length reduction,
we got the decomposition of length $6$: {\tiny
\begin{verbatim}
   0.0000 - 0.0000i   0.0000 - 0.0000i   0.0000 - 0.0000i  -0.0000 - 0.0000i  -0.0000 - 0.0000i  -0.0000 - 0.0000i
   1.4054 + 2.4745i   2.1010 + 1.3045i  -0.0297 + 2.3633i   0.5016 - 2.3944i  -0.4230 + 0.3744i   0.5743 + 0.1277i
   0.2365 - 2.9247i  -1.4229 - 2.0011i   1.6962 - 1.5666i  -1.1281 + 1.4019i  -1.2226 + 1.2100i  -1.4871 + 0.0508i
  -0.3392 + 5.3488i   2.4758 + 3.6000i  -3.0429 + 3.3063i   2.5311 - 2.2554i   0.0084 + 0.3644i  -1.2250 + 0.0647i
\end{verbatim} \noindent}It took a few seconds.
The decomposition error is around $10^{-12}$.
The catalecticant matrix has rank $6$,
so the above gives a rank decomposition by Lemma~\ref{lm:relF:ranks}.
\end{exm}

\begin{exm}
Consider the tensor $\mF \in \mt{S}^4( \cpx^5 )$ given as
\[
\mF_{i_1 i_2 i_3 i_4} = \sin (i_1+i_2+i_3+i_4) + \cos (i_1 i_2 i_3 i_4)
 \quad (0 \leq i_1, i_2, i_3, i_4 \leq 4).
\]
Its catalecticant matrix has rank $12$.
Applying Algorithm~\ref{alg:rc:dcmp} with $r=12$, we got
the decomposition of length $12$: {\tiny
\begin{verbatim}
   0.0000 - 0.0000i  -0.0000 - 0.0000i   0.0000 + 0.0000i   0.7769 - 0.3218i   0.0000 - 0.0000i  -0.0000 - 0.0000i
  -0.2682 - 0.9708i  -0.1367 - 0.0680i  -0.0096 - 0.0050i   0.6905 + 0.4799i   0.9205 + 0.4450i   0.5288 - 0.3722i
  -0.7311 - 0.4860i  -0.2521 + 0.3927i   0.1162 + 0.0270i  -0.0307 + 0.8403i  -0.3466 + 0.3207i  -0.7863 + 0.6576i
   0.1919 - 0.0396i  -0.1093 + 0.5180i  -1.2379 + 0.0113i  -0.7237 + 0.4282i   0.0571 + 0.8127i   0.1489 - 0.1645i
  -0.7590 + 0.0976i   1.1092 + 0.8967i  -1.3191 + 0.0290i  -0.7513 - 0.3776i  -0.0794 - 0.2031i   1.1553 - 1.1422i

  -0.0000 - 0.0000i  -1.0000 + 0.0000i   0.0000 - 0.0000i   0.0000 - 0.0000i   0.3218 - 0.7769i   0.0000 - 0.0000i
   0.2109 - 0.4326i  -1.0000 + 0.0000i  -0.4919 + 0.3044i  -0.5844 - 0.9348i  -0.4799 - 0.6905i  -1.0052 - 0.3150i
  -0.1698 + 0.8876i  -1.0000 + 0.0000i  -0.6169 + 0.5712i  -0.4604 + 0.1266i  -0.8403 + 0.0307i  -0.4973 - 0.7259i
  -0.0010 + 0.0363i  -1.0000 - 0.0000i  -1.0080 + 0.9616i  -0.9528 - 0.1489i  -0.4282 + 0.7237i  -0.0600 + 0.1927i
  -0.1281 - 1.6921i  -1.0000 - 0.0000i  -0.7208 + 0.9866i  -0.0045 + 0.2068i   0.3776 + 0.7513i   0.0484 - 0.7334i
\end{verbatim} \noindent}It took a couple of seconds.
The decomposition error is around $10^{-14}$.
The above gives a rank decomposition by Lemma~\ref{lm:relF:ranks}.
\end{exm}

\begin{exm}
Consider the tensor in $\mt{S}^3( \cpx^7 )$ whose
upper triangular entries are {\scriptsize
\[
49  ,\,  52   ,\,   37   ,\,  -37  ,\,   -50  ,\,   -93  ,\,    89  ,\,  -147  ,\,    53  ,\,
 -20   ,\,   -1   ,\,   26  ,\,   -16   ,\,  -29   ,\,  9  ,\,   -17   ,\,   75   ,\,  -38 ,
 -17  ,\,   -77   ,\,   71 ,\,
\]
\[
 99   ,\,  4   ,\,   61   ,\,  -23  ,\,   -42  ,\,   -17   ,\,   11  ,\,
  -87  ,\,   -45  ,\,    -8   ,\,  116   ,\,  -46   ,\,  -38  ,\,   -48  ,\,  -117 , \,
-40  ,\,    65  ,\,    77  ,\,  -3   ,\,    96 ,\,
\]
\[
80   ,\,  -93   ,\,   12   ,\,    5  ,\,   -31 ,
 -44  ,\,   -39   ,\,   10  ,\,   -79   ,\,   64   ,\,   -5  ,\,   137  ,\,    -7
 -40  ,\,    36  ,\,   -30   ,\,    9  ,\,  59   ,\,   41 ,\, 44  ,\, 37 ,\,
\]
\[
-17   ,\,   -8 ,
 -95   ,\,    5  ,\,    31   ,\,   12  ,\,    66  ,\,    21   ,\,  106   ,\,  -12, \,
 40   ,\,    6   ,\,   78   ,\,  -16   ,\,   96   ,\,   52   ,\,   15   ,\,   -4  ,\,
 -53   ,\,   10   ,\,    3   ,\,   -8.
\] \noindent}For
the default value $r=12$, Algorithm~\ref{alg:rc:dcmp}
produced the decomposition:{\tiny
\begin{verbatim}
   3.3887 - 0.9935i   0.1743 - 1.9932i  -1.3754 + 0.1574i   1.7164 + 1.2273i  -0.2199 - 0.6368i   0.1687 + 0.2668i
  -0.6328 + 5.3674i   0.3875 - 2.6898i   0.4848 - 1.5815i   3.1488 - 1.4028i   1.2996 - 1.6498i   2.4370 - 1.1857i
  -0.1043 + 0.0487i  -1.8504 + 1.8720i   1.5402 - 4.8430i  -0.8872 + 1.6694i  -4.5402 - 1.3199i  -2.7707 + 2.5037i
   1.0096 + 3.5096i  -2.6621 + 1.4238i   0.1103 - 0.4865i  -2.2066 + 1.2238i   0.2269 + 0.5137i   2.7036 + 4.1404i
  -2.3543 + 0.8867i  -1.0351 - 1.0243i   1.2195 - 1.8089i   0.3480 - 1.2185i  -1.8558 - 0.0594i  -0.4570 + 3.0827i
  -1.0119 - 0.9969i   2.2954 - 0.7927i  -3.7602 + 0.2418i  -2.2716 - 1.8641i   1.3051 - 3.5072i   0.5335 + 0.3160i
   1.5778 - 0.7354i   0.0601 - 2.2870i  -0.0562 + 1.7575i   0.6315 - 1.1591i   0.0592 + 0.5537i  -2.3464 - 0.3364i

   2.5666 + 2.5520i  -0.1979 + 0.9444i  -2.7901 - 1.2362i  -2.9522 + 3.1287i  -0.2476 + 3.1839i  -1.8626 - 1.1029i
  -2.9476 - 1.3906i   1.2410 + 0.8104i   2.0747 + 1.0738i   3.5610 + 2.1201i   3.5116 + 1.2409i   3.6480 + 4.3116i
  -0.5535 - 1.6854i  -1.5364 + 0.0220i   0.5836 + 0.9219i   1.1947 + 0.0594i  -2.8377 - 2.7227i  -0.2092 + 2.3056i
  -0.9648 + 4.2704i  -4.2003 + 2.4566i   0.0692 + 4.5040i   2.5234 + 0.5384i  -0.1643 - 1.0481i   0.6575 + 0.0574i
   0.9150 + 1.3780i  -2.3690 + 2.7505i  -0.8278 - 0.8274i   3.3398 - 1.2667i  -0.8625 - 3.4098i  -0.4542 + 0.7497i
  -0.4286 + 0.8020i  -0.7728 + 1.9996i   0.3706 + 0.5754i  -1.1274 - 2.1393i   1.0650 + 0.8385i  -0.8726 + 0.9087i
  -1.4644 - 2.0376i  -0.2382 + 2.4795i   1.8640 + 1.7468i  -0.4149 + 0.9290i  -1.9110 + 0.1033i  -0.1823 - 2.1603i
\end{verbatim} \noindent}It took a few minutes.
The decomposition error is around $10^{-12}$.
The tensor was generated randomly (rounded to integers),
so its rank is expected to be $12$.
For other randomly generated tensors in $\mt{S}^3(\cpx^7)$,
we have similar computational results.
\end{exm}

\begin{exm}
Consider the tensor in $\mt{S}^4( \cpx^5 )$
whose upper triangular entries are:{\scriptsize
\[
-75 ,\,  -12  ,\,   -9  ,\,  -60   ,\,  43  ,\,   30   ,\,  95   ,\, -31   ,\, -12   ,\,  23   ,\,
30  ,\,  -17  ,\,  -29  ,\,   65  ,\,   54  ,\,  -17 ,\,   18    ,\,  3  ,\,
\]
\[
-102 , \, -16  ,\,  -10   ,\,  48  ,\,  -14   ,\,  35   ,\,   6  ,\,   44  ,\,  -67   ,\,   5   ,\,  -3  ,\,
10  ,\,   13  ,\,   19  ,\,  9   ,\, -15  ,\, -126   ,\,  54  ,\,
\]
\[
-38  ,\,   -1  ,\,
-22  ,\,  -40   ,\, -25   ,\,  17  ,\,  -27  ,\,  -35  ,\,  -75   ,\,  31   ,\,  74   ,\, -43 ,\,
    38   ,\,  46  ,\,    6   ,\,  38   ,\,  30  ,\,  -64   ,\,
\]
\[
18   ,\,  24   ,\,  14  ,\,
-20   ,\, -17   ,\, -92  ,\,  -50   ,\, -31   ,\, -46  ,\,   67 ,\,
    36  ,\,  -26  ,\,   17  ,\,  -56    ,\, -7  ,\,  -18.
\] \noindent}The
tensor was randomly generated (rounded to integers). For
the default value $r=15$, Algorithm~\ref{alg:rc:dcmp}
produced the decomposition: {\tiny
\begin{verbatim}
   1.7766 - 0.7962i  -0.0989 + 1.5672i   0.4586 + 1.2843i  -1.0824 - 2.2054i  -0.9173 + 0.0833i
   0.5902 - 1.7985i   0.7956 - 0.5731i   2.1843 - 0.0837i  -0.2960 + 1.0261i   0.9095 + 1.2630i
  -0.3040 + 0.4909i   3.5389 - 1.4917i  -1.5819 - 2.1610i  -1.2090 + 3.3445i   0.8739 - 3.5943i
   2.1547 + 0.3346i   0.3134 - 0.1395i   0.1921 - 0.5282i   0.4854 + 1.1021i  -0.5342 - 0.1079i
  -0.6948 + 0.3296i  -2.0563 + 1.5891i  -1.4849 + 3.6039i  -0.5785 - 1.9694i  -2.6814 + 2.0639i

  -0.9571 - 0.8367i  -0.9046 - 2.7956i  -1.1199 + 0.5681i   1.6946 - 2.8898i   1.2026 - 1.4611i
   2.5902 - 0.2139i  -0.8625 - 1.1986i  -1.4064 - 2.0862i  -0.0069 - 1.6966i  -2.0682 - 0.9022i
  -0.8195 - 1.9443i   0.5415 + 0.5792i   0.4608 + 1.0577i  -0.5489 + 0.9552i   0.3738 - 1.7060i
  -0.7524 + 0.4986i  -0.8816 - 0.7419i  -0.9164 + 0.3831i   1.1807 - 1.2444i   1.0867 - 0.0443i
   1.6401 + 1.4332i   0.4170 + 4.3105i  -1.5277 - 2.8687i  -1.9673 + 3.0638i  -1.9649 - 0.8143i

  -1.0529 - 1.3452i  -0.8754 - 2.4256i  -0.0504 - 0.2344i  -2.4363 - 1.0828i   1.0572 - 1.5768i
   0.1519 - 2.2219i   0.8495 - 1.6074i   1.5621 - 1.8827i   0.2592 - 0.3617i   2.3835 - 0.2981i
   0.3655 - 0.7656i   0.9260 - 0.6683i  -1.8452 + 2.0499i   1.8011 - 0.9841i  -2.3706 + 1.0560i
   0.4716 + 1.4494i  -1.0883 + 0.9951i  -1.6793 + 1.5497i   1.0282 + 1.1317i   0.8274 + 0.2019i
  -1.4550 + 0.6641i   0.8709 - 1.4441i  -1.1551 + 0.9121i  -0.8861 - 2.1757i  -1.0959 - 1.8642i
\end{verbatim} \noindent}It took a couple of seconds.
The decomposition error is around $10^{-11}$.
The catalecticant matrix has rank $15$,
so the above gives a rank decomposition
by Lemma~\ref{lm:relF:ranks}.
For other randomly generated tensors in $\mt{S}^4(\cpx^5)$,
we have similar computational results.
\end{exm}

\begin{exm}
Consider the tensor in $\mt{S}^5 ( \cpx^4 )$ whose upper triangular entries are {\scriptsize
\[
17 ,\,  -13   ,\,   29  ,\,    20  ,\,   -30   ,\,   -5   ,\,  -83   ,\,  -10   ,\,
93  ,\,   -69   ,\,   52   ,\,  128   ,\,   67   ,\,   40   ,\,
 -6  ,\,    66  ,\,  46   ,\,   -8   ,\,  -10  ,\,
\]
\[
 -108  ,\, -33    ,\,  80  ,\,   -16   ,\,   47  ,\,   -19  ,\,   -23   ,\,   38  ,\,    66   ,\,
 54  ,\,   -14   ,\,    0   ,\,   22   ,\,  39   ,\,   77   ,\,   28   ,\,   55  ,\,
 -59   ,\,   73  ,\,
\]
\[
81  ,\,   67  ,\,    20   ,\,    9    ,\,
 -33   ,\,  -33  ,\,   -32   ,\,   10  ,\,   -31   ,\,   -1   ,\,   44   ,\,   31   ,\,
 -67  ,\,   -37  ,\,    34   ,\,    6   ,\,   81   ,\,  -55.
\] \noindent}For
the default value $r=14$, Algorithm~\ref{alg:rc:dcmp}
produced the decomposition: {\tiny
\begin{verbatim}
  -0.4859 - 0.5911i  -0.6703 + 1.0296i  -1.7767 + 0.8556i  -1.4424 + 1.3130i   2.0146 - 0.8757i
   1.8808 + 0.0976i   1.1844 - 0.0241i   0.6504 + 1.6116i   2.0375 + 0.8269i   2.2090 - 0.5356i
  -1.3334 - 0.1266i  -1.2351 - 0.2072i  -0.1653 - 1.4772i   0.1740 + 1.6152i   1.2336 + 0.6581i
  -2.6869 - 0.3499i  -2.3412 + 1.8382i   1.4152 - 0.6755i  -0.6329 - 0.8693i   0.9572 - 0.0797i

  -1.6874 - 0.6179i  -1.4593 - 0.4921i  -1.7642 - 0.7340i  -1.7982 - 0.2974i  -1.5568 + 0.7872i
   0.7319 - 1.6435i   1.2276 + 0.4577i   1.5966 + 0.2339i   1.7420 + 0.8219i   1.5418 - 0.0386i
  -0.0404 + 1.5668i   1.6082 - 1.1321i   0.1856 - 0.8780i  -0.6215 + 1.1713i   1.5319 + 1.1126i
   1.6182 + 1.0162i  -0.3730 - 1.7918i   1.1269 - 1.4395i   0.9538 - 0.9343i   0.0213 + 1.9383i

  -1.6628 + 0.5729i  -1.3538 - 1.3348i  -2.0749 + 0.0471i   2.1352 + 0.6442i
   2.1360 - 0.6526i   1.9225 - 0.8532i   0.5884 - 0.1493i   2.1692 + 0.6314i
  -0.2769 - 0.5100i   0.1933 - 1.2516i   0.8709 + 0.3253i   1.3007 - 0.6285i
   1.1450 + 1.3432i  -0.8092 + 1.0486i  -2.7097 + 0.4264i   0.7922 + 0.2493i
\end{verbatim} \noindent}It took a couple of seconds.
The decomposition error is around $10^{-12}$.
The tensor was randomly generated (rounded to integers),
so the above is expected to be a rank decomposition.
For other randomly generated tensors in $\mt{S}^5(\cpx^4)$,
we have similar computational results.
\end{exm}

\begin{exm}
Consider the tensor in $\mt{S}^6 ( \cpx^4 )$
whose upper triangular entries are{\scriptsize
\[
    1 ,\,  -35  ,\,   -15   ,\,  -75  ,\,    98  ,\,   -52   ,\,    3  ,\,   -15   ,\,  -92   ,\,  -51   ,\,
    28  ,\,     9   ,\,   25   ,\,  -12  ,\,   -12   ,\,   -6   ,\,   54  ,\,   -23  ,\,   -61  ,\,   -13  ,\,
    15  ,\,
\]
\[
 -4  ,\,   -32  ,\,   -10  ,\,   -47  ,\,   -59   ,\,    9   ,\,  -12  ,\,    94   ,\,  -87  ,\,
    -34  ,\,    -6   ,\,   26  ,\,    72   ,\,  -34  ,\,  -113  ,\,   122   ,\,   68  ,\,   -19   ,\,  -13  ,\,
    77  ,\,
\]
\[
 68  ,\,    32  ,\,    20  ,\,   -46  ,\,    61  ,\,   -14  ,\,   -60  ,\,    58  ,\,   -14   ,\,
 57   ,\,   -4  ,\,   -55   ,\,  -18  ,\,   -15   ,\,   13   ,\,  -36   ,\,    3   ,\,  -16  ,\,   -26  ,\,
 77  ,\,  -82  ,\,  -17  ,\,
\]
\[  -19   ,\,   81  ,\,   -32   ,\,   45  ,\,    68  ,\,   -91   ,\,    3   ,\,
    -67  ,\,   -41   ,\,   32   ,\,   49  ,\,   -16   ,\,  -26   ,\,   47  ,\,   -20  ,\,    -18  ,\,    70  ,\,
   -28   ,\,   22   ,\,   25  ,\,   -42.
\] \noindent}For
the default value $r=21$, Algorithm~\ref{alg:rc:dcmp}
produced the decomposition: {\tiny
\begin{verbatim}
  -1.4596 + 0.5136i   2.0603 + 0.5095i   1.1222 + 0.0638i   0.8154 - 1.7817i  -0.2524 + 0.7907i   1.0125 - 0.5051i
   0.3135 + 0.7200i  -0.6900 + 1.1717i   0.2507 + 1.6619i   0.1791 + 0.0394i  -1.2794 + 0.9613i  -2.0645 - 0.1349i
  -1.4133 + 0.1344i   0.6915 - 0.2367i   0.5306 + 1.7345i  -0.0531 + 1.6124i   1.7085 + 0.4062i   1.3432 + 0.6793i
   0.7936 + 1.8248i  -0.5297 + 1.2167i   0.6811 - 0.4799i   1.9304 - 0.4848i  -1.2445 + 1.1681i   0.0755 - 1.6539i

  -0.1305 - 1.3478i  -1.3394 + 0.5479i  -0.3195 + 1.6664i   1.4475 + 0.9899i  -1.3831 - 1.0481i  -1.2866 + 1.3324i
  -1.1321 - 0.8737i  -0.7956 - 0.8972i  -0.5081 + 0.9225i  -1.0656 + 0.3921i   0.3516 - 1.4076i   1.1224 - 0.1997i
   1.7508 - 0.3281i   0.9495 - 0.4047i   0.5261 + 0.2362i   0.8792 + 0.3875i   0.5770 - 0.6563i   1.5912 - 0.8020i
  -1.4245 - 1.0803i  -0.4005 + 2.1648i   0.5388 + 2.1204i   0.9179 + 1.1780i   1.6059 + 0.1686i   0.9348 + 1.1892i

  -0.4193 - 0.7312i   1.2306 - 1.1451i   0.7843 + 0.3831i   0.7619 - 1.0376i  -1.2090 + 0.1338i  -1.0915 - 1.0439i
  -0.1698 + 2.1097i  -0.0778 - 1.3340i  -2.0102 + 0.3982i  -0.0205 - 1.9106i   0.9346 + 1.8949i   0.7062 + 0.0262i
   1.2759 + 1.0468i  -1.0679 + 0.8113i   1.3695 - 0.5441i   0.8400 + 0.8760i   1.5698 + 0.2902i  -0.0079 + 1.3733i
   0.8825 + 1.6556i   2.1060 + 0.4846i  -0.2209 + 1.5111i  -0.7921 + 1.1630i   1.6861 + 1.0216i   1.5312 + 1.3715i


  -0.5390 + 1.8534i   0.9852 + 0.7079i  -1.5235 - 0.2267i
   1.4676 - 1.1163i   1.4815 - 0.6813i  -1.5642 - 0.6497i
  -0.3530 - 0.6155i   1.7008 - 0.1266i   0.6668 + 1.1214i
   0.5705 + 0.6937i  -0.3041 + 0.8836i   1.3072 + 0.4241i
\end{verbatim} \noindent}It took a few minutes.
The decomposition error is around $10^{-12}$.
The tensor was generated randomly (rounded to integers),
so its rank is expected to be $21$. The catalecticant matrix has rank $20$.
The above is likely a rank decomposition.
For other randomly generated tensors in $\mt{S}^6(\cpx^4)$,
we have similar computational results.
\end{exm}

\begin{exm}
(random tensors)
This example explores the performance of Algorithm~\ref{alg:rc:dcmp}
for random symmetric tensors. We generate $\mF$
such that each entry $\mF_{i_1 \ldots i_m}$
is a random complex number (up to symmetry), whose real and imaginary parts
obey Gaussian distributions. In MATLAB, this can be done by the command
{\tt randn + sqrt(-1)*randn}. Such tensors
are expected to have generic ranks given by \reff{AlxHrchFml}.
We apply Algorithm~\ref{alg:rc:dcmp} with the default values for $r$.
For the pairs of $(n+1,m)$ in Table~\ref{time:rand:tsr:grank},
we generate $50$ instances of random tensors in $\mt{S}^m(\cpx^{n+1})$.
(If it appears, the symbol $\star$ means that only $20$ instances were generated,
because the computation takes longer time.)
For all the instances, Algorithm~\ref{alg:rc:dcmp} successfully
computed decompositions of desired lengths.
The decomposition errors are in the order of $10^{-10}$. For each pair $(n+1,m)$,
we report the average time (in seconds) consumed by the computation.
The computational results are summarized in
Table~\ref{time:rand:tsr:grank}.
For such tensors, we can get their rank decompositions efficiently.
\begin{table}[htb] \small
\caption{Computational results for rank decompositions
of random tensors in $\mt{S}^m(\cpx^{n+1})$. The $r$ is the generic rank,
and {\tt time} (in seconds) is the average of the consumed time.}
\label{time:rand:tsr:grank}
\begin{tabular}{||l|c|r||l|c|r||l|c|r||} \hline
(n+1,m) & $r$ & time & (n+1,m) & $r$ & time & (n+1,m) & $r$ & time  \\ \hline
(3, 3) & 4  &   0.1  &  (8, 3)$\star$ &  15 &   1799.6   & (3, 5) &  7  &  0.7 \\  \hline
(4, 3) & 5  &    0.3    &  (3, 4) & 6  &  0.1   & (4, 5) &  14  &  40.8  \\  \hline
(5, 3) & 8  &  0.7 &  (4, 4) & 10  &   1.1   & (5, 5)$\star$  & 26  & 2168.3  \\  \hline
(6, 3) & 10  &  3.5   &  (5, 4) & 15 &  9.9   & (3, 6) & 10  &  1.1   \\  \hline
(7, 3) & 12  & 361.1  &  (6, 4) & 21  &  575.5   & (4, 6) & 21  &  175.3  \\  \hline
\end{tabular}
\end{table}
\end{exm}

\noindent
{\bf Acknowledgement}
The author would like to thank Lek-Heng Lim, Luke Oeding
and two anonymous referees for the useful comments.
The research was partially supported by the NSF grants
DMS-0844775 and DMS-1417985.

\end{document}